\begin{document}
	\newtheorem{theorem}{Theorem}
	\newtheorem{proposition}[theorem]{Proposition}
	\newtheorem{conjecture}[theorem]{Conjecture}
	\newtheorem{corollary}[theorem]{Corollary}
	\newtheorem{lemma}[theorem]{Lemma}
	\newtheorem{sublemma}[theorem]{Sublemma}
	\newtheorem{observation}[theorem]{Observation}
	\newtheorem{remark}[theorem]{Remark}
	\newtheorem{definition}[theorem]{Definition}
	\theoremstyle{definition}
	\newtheorem{notation}[theorem]{Notation}
	\newtheorem{question}[theorem]{Question}
	\newtheorem{example}[theorem]{Example}
	\newtheorem{problem}[theorem]{Problem}
	\newtheorem{exercise}[theorem]{Exercise}
	\numberwithin{theorem}{section} 
	\numberwithin{equation}{section}
	
	\title[Dimension-free estimate for  semi-commutative discrete spherical maximal operator]{Dimension-free estimate for  semi-commutative discrete spherical maximal operator}

	\author{Yue Zhang}
	\address{Institute for Advanced Study in Mathematics, Harbin Institute of Technology, 150001 Harbin, China}
	\email{21b912030@stu.hit.edu.cn}
	
	%
		\thanks{{\it 2000 Mathematics Subject Classification:}  42B25 · 46L52 · 46E40}
\thanks{{\it Key words:} Spherical maximal operator · noncommutative $L_{p}$ space · dimension-free · transference principle}

\date{}
\maketitle

\markboth{Y. Zhang}
{Semi-commutative discrete spherical maximal operator}
	\begin{abstract}
	In this paper, we establish  dimension-free estimates for the discrete spherical maximal operator  on semi-commutative $L_{p}$ space for $2\leq p\leq\infty$.
	\end{abstract}
	\maketitle
	\section{Introduction}

Modern discrete harmonic analysis originates from Bourgain's work on the pointwise ergodic theorem along polynomial orbits (see \cite{bourgain1998-1,bourgain1998-2}). Since then, numerous research papers  have been  explored in this area (see e.g.   \cite{MR3992568, MR4175747, Hughes,Ionescu, Ionescu-W, Kesler-R, Kesler2}). The dimension-free estimate of  the discrete spherical maximal operator is one of the topics. We recall the result here. Let $\mathbb{S}=\left\{x\in\mathbb{R}^d: |x|=1\right\}$ be the unit sphere  in $\mathbb{R}^d$ and   $\mathbb{I}\subset\mathbb{R}_{+}$  be a non-empty index set. For $t\in\mathbb{I}$ and $x\in\mathbb{Z}^d$, we denote the discrete spherical  average by
\begin{align}\label{250402.3}
	\mathcal{A}^{d}_{t}f(x)=\frac{1}{|\mathbb{S}_{t}\cap \mathbb{Z}^d|}\sum_{y\in \mathbb{S}_{t}\cap \mathbb{Z}^d} f(x-y),\quad f\in \ell_{1}(\mathbb{Z}^d),
\end{align}
 where $\mathbb{S}_{t}=\left\{x\in\mathbb{R}^d: |x|=t\right\}$ is the dilation of the unit sphere  $\mathbb{S}$. The   maximal spherical operator is given by
 \begin{align*}
 \mathcal{A}^{d}_{*}f(x)=\sup_{t\in \mathbb{I}}|\mathcal{A}^{d}_{t}f(x)|.
 \end{align*}
 Magyar  first established  a local spherical maximal inequality in \cite{Magyar1997}. 
 Later, Magyar, Stein, and Wainger (see \cite{Magyar})  studied   the case $\mathbb{I}= \sqrt{\mathbb{N}}=\left\{\lambda\in(0,\infty),\lambda^2\in \mathbb{N}\right\}$, and proved that if  $d\geq 5$ and $p>\frac{d}{d-2}$, there exists a constant $C_{d,p}>0$ depending  on  $d$ and $p$ such that  
\begin{align}\label{250519.1} 
	\| \mathcal{A}^{d}_{*}f\|_{\ell_{p}(\mathbb{Z}^d)}\leq  C_{d,p}\|f\|_{\ell_{p}(\mathbb{Z}^d)}. 
\end{align}
Moreover, they showed that the above ranges of $p$ and $d$ are optimal by constructing  counterexamples.  A natural question is whether the constant  $C_{d,p}$ in (\ref{250519.1}) is independent of  the dimension $d$.  This was affirmatively answered by Mirek et al. \cite{mirek2023dimension} in the case $2\leq p\leq\infty$ and $\mathbb{I}=\mathbb{D}=\left\{ 2^{n}: n\in \mathbb{N} \cup \left\{0\right\} \right\}$, who showed that  
\begin{align}\label{250603.1}
	\|\mathcal{A}^{d}_{*}f\|_{\ell_{p}(\mathbb{Z}^d)}\leq  C_{p}\|f\|_{\ell_{p}(\mathbb{Z}^d)},
\end{align} 
where  $C_{p}$ depends only on   $p$.

 

 Noncommutative harmonic analysis is a rapidly developing field, based on  harmonic analysis, operator algebras, noncommutative geometry, and quantum probability  (see e.g. \cite{MR3079331,MR4320770,MR4048299,MR3283931,mei2007operator,xia2016characterizations,MR3778570}).  
Among its branches, semi-commutative harmonic analysis offers a framework that is both accessible and   nontrivial, and it requires the development of new ideas.  For instance, let $\mathcal{M}$ be a von Neumann algebra and $f:\mathbb{R}^d\rightarrow \mathcal{ {M}}$ an operator-valued function belonging to   the semi-commutative $L_{p}$ space. The continuous spherical maximal operator for a sequence of operators
\begin{align*} 
	\mathbf{A}^{d}_{t}f(x)=\frac{1}{|\mathbb{S}_{t}|}\int_{\mathbb{S}_{t}}f(x-y) dy 
\end{align*}
 seems no available as   any two operators in $\mathcal{M}$ can not be compared directly. 
 Thus, establishing the corresponding   $L_{p}$   maximal inequality is much more subtle. This obstacle was not overcome until the introduction of Pisier’s vector-valued noncommutative   spaces $L_{p}(L_{\infty}(\mathbb{R}^d)\overline{\otimes} \mathcal{M};\,\ell_{\infty})$. Following Pisier’s approach, Hong   derived  the    semi-commutative spherical maximal   inequality for $d\geq 3$ and $p>
 \frac{d}{d-1}$ (see \cite[Proposition 4.1]{MR3275742}):
\begin{align*} 
	\|\sup_{t>0} \mathbf{A}^{d}_{t}f\|_{L_{p}(L_{\infty}(\mathbb{R}^d)\overline{\otimes}\mathcal{M})}\leq C_{p,d} \|f\|_{L_{p}(L_{\infty}(\mathbb{R}^d)\overline{\otimes}\mathcal{M})}.
\end{align*}
 Here, the  norm $\|\sup_{t>0} \mathbf{A}^{d}_{t}f\|_{L_{p}(L_{\infty}(\mathbb{R}^d)\overline{\otimes}\mathcal{M})}$  is understood as $L_{p}(L_{\infty}(\mathbb{R}^d)\overline{\otimes} \mathcal{M};\,\ell_{\infty})$ norm of  the sequence $( \mathbf{A}^{d}_{t}f)_{t>0}$ in the noncommutative case; we  refer  the reader  to Definition \ref{24926.1} in Section \ref{1222.1} for more details. Furthermore, Hong  studied the maximal ergodic inequality of spheres over Euclidean spaces  (see \cite{MR4048299}). Recently,   Chen  and Hong \cite{chen-hong}  paid attention to the discrete spherical maximal inequality and successfully transferred  Magyar-Stein-Wainger inequality (\ref{250519.1})   to the semi-commutative setting. Despite these
advancements,    dimension-free estimates for the  semi-commutative discrete maximal operator has not been explored.    In this paper, we  address this issue by investigating the semi-commutative analogue of inequality (\ref{250603.1}).

   Let $\mathcal{M}$ be a von Neumann algebra equipped with a normal semifinite faithful trace $\tau$.  The noncommutative $L_{p}$ space associated with $(\mathcal{{M}}, \tau)$ is denoted by $L_{p}(\mathcal{ {M}})$. Consider the tensor von Neumann algebra ${\mathcal{N}}=\ell_{\infty}(\mathbb{Z}^{d})\overline{\otimes} \mathcal{M}$ equipped  with the  trace $\sum\otimes\tau $.  The semi-commutative  space   associated with $(\mathcal{{N}}, \sum\otimes\tau)$ is denoted $L_{p}({\mathcal{N}})$ ,  which  coincides with $L_{p}\big(\mathbb{Z}^d;L_{p}(\mathcal{M})\big)$, the  $p$-integrable  functions from $\mathbb{Z}^d$ to $L_{p}(\mathcal{M})$. Given $t\in\mathbb{D}=\left\{ 2^{n}: n\in \mathbb{N} \cup \left\{0\right\} \right\}$, the semi-commutative discrete spherical averaging operator  $\mathcal{A}^{d}_{t}$ has the same form with (\ref{250402.3}) but acts on functions in $L_{p}(\mathcal{N})$. We establish the following semi-commutative maximal inequality.

\begin{theorem}\label{0109.5}
	Let $d\geq 16$ and  $f\in L_{p}(\mathcal{N})$ with $2\leq p\leq\infty$. There exists a constant $C_{p}>0$ independent of the dimension $d$ such that  
	\begin{align}\label{250606.1}
		\|\sup_{t\in \mathbb{D}}\mathcal{A}_{t}^{d}f\|_{L_{p}(\mathcal{N})}\leq C_{p}\|f\|_{L_{p}(\mathcal{N})}.
	\end{align}
\end{theorem}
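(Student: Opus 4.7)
The plan is to adapt the dimension-free argument of Mirek et al.\ \cite{mirek2023dimension} from the scalar case to the semi-commutative setting, combining it with Chen-Hong's semi-commutative transference machinery \cite{chen-hong} and Hong's continuous semi-commutative spherical maximal theory \cite{MR3275742,MR4048299}. The starting point is to view $\mathcal{A}_{t}^{d}$ as an $L_{p}(\mathcal{M})$-valued Fourier multiplier on $\mathbb{Z}^{d}$ with symbol
\[
m_{t}(\xi)=\frac{1}{|\mathbb{S}_{t}\cap\mathbb{Z}^{d}|}\sum_{y\in\mathbb{S}_{t}\cap\mathbb{Z}^{d}}e^{-2\pi i\,y\cdot\xi},\qquad \xi\in\mathbb{T}^{d},
\]
and, for each dyadic $t=2^{n}$, to decompose $m_{t}=M_{t}+e_{t}$ via a Magyar-Stein-Wainger \cite{Magyar} major-arc expansion. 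The main term $M_{t}$ is a sum of Gauss-sum weighted localisations (near rationals $a/q$ with $q$ small) of the continuous spherical multiplier $\widehat{\sigma_{t}}$; the error $e_{t}$ should satisfy $\|e_{t}\|_{L^{\infty}(\mathbb{T}^{d})}\lesssim t^{-\delta}$ with constants polynomial, rather than exponential, in $d$ --- the hypothesis $d\geq 16$ entering precisely to control the Gauss sums and the associated lattice counts.

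For the main term I would apply a semi-commutative version of the Magyar-Stein-Wainger transference principle to dominate the $L_{p}(\mathcal{N};\ell_{\infty})$ norm of $(M_{t}f)_{t\in\mathbb{D}}$ by that of the continuous sequence $(\mathbf{A}_{t}^{d}g)_{t\in\mathbb{D}}$ in $L_{p}(L_{\infty}(\mathbb{R}^{d})\overline{\otimes}\mathcal{M};\ell_{\infty})$, for a suitable auxiliary $g$ obtained by periodic lifting of $f$ followed by a smooth truncation. A \emph{dimension-free} bound on this continuous maximal function is then required; I intend to obtain it by subordinating the spherical averages to the Poisson semigroup on $\mathbb{R}^{d}$ and combining the noncommutative Stein maximal inequality for symmetric diffusion semigroups with a dimension-free square-function estimate formulated through Pisier's column and row square functions.

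For the error term, noncommutative Plancherel yields
\[
\|\mathcal{E}_{t}^{d}f\|_{L_{2}(\mathcal{N})}\lesssim t^{-\delta}\,\|f\|_{L_{2}(\mathcal{N})}
\]
with constants independent of $d$. Since $\mathbb{D}$ is lacunary, the Pisier maximal norm is controlled by the square-function norm,
\[
\bigl\|\sup_{t\in\mathbb{D}}\mathcal{E}_{t}^{d}f\bigr\|_{L_{2}(\mathcal{N};\,\ell_{\infty})}\lesssim\Bigl(\sum_{t\in\mathbb{D}}\|\mathcal{E}_{t}^{d}f\|_{L_{2}(\mathcal{N})}^{2}\Bigr)^{1/2}\lesssim\|f\|_{L_{2}(\mathcal{N})},
\]
and interpolation in the Pisier scale with the trivial $L_{\infty}$ bound settles the full range $2\leq p\leq\infty$.

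I expect the main obstacle to lie in the dimension-free bound for the continuous semi-commutative spherical maximal function. Scalar proofs rely on pointwise $\ell^{2}$ square-function estimates and methods of rotations, neither of which transfers directly to operator-valued functions. One must instead work with Pisier's column and row square functions and establish a noncommutative dimension-free analogue of Stein's spherical $g$-function inequality, all while monitoring carefully that every implied constant arising from the circle-method decomposition and the semigroup subordination remains independent of $d$.
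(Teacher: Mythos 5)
Your proposal differs from the paper's proof in a way that, as written, leaves a genuine gap. You apply the Magyar--Stein--Wainger major-arc decomposition $m_t = M_t + e_t$ to \emph{all} dyadic radii and expect $\|e_t\|_{L^\infty(\mathbb{T}^d)} \lesssim t^{-\delta}$ with constants only polynomial in $d$. That estimate is simply not dimension-free at small radii: the circle-method error (Proposition~\ref{1202.1} here, taken from Mirek--Szarek--Wr\'obel) is of size $\lesssim^d d^{3d/4}\lambda^{1-d/4}$, which is only summably small once $t \gtrsim d^{3/2}$. The paper therefore splits into three regimes --- $1 \le t \lesssim d^{1/2}$, $d^{1/2} \lesssim t \lesssim d^{3/2}$, $t \gtrsim d^{3/2}$ --- and uses the circle method only in the last one. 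In the first two regimes it compares $\mathfrak{m}_t$ directly with a \emph{discrete} heat-type multiplier $\mathfrak{p}_{t^2/d}(\xi) = e^{-(t^2/d)\sum_j \sin^2(\pi\xi_j)}$ (and its ``antipodal'' partner $p^2_\lambda$), whose maximal function is controlled dimension-free via the Junge--Xu noncommutative maximal ergodic theorem applied to the semigroup generated by the discrete Laplacian (Proposition~\ref{8.8.2}). Without this separate treatment of small and intermediate scales, your plan cannot close. Relatedly, the role of $d \ge 16$ is to make the circle-method error decay at large scale (the exponent $d/4-2$ must be positive and the geometric sum must converge), not primarily to control Gauss sums or lattice counts.

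Two further points. First, for the continuous dimension-free ingredient you propose Poisson-semigroup subordination plus a noncommutative $g$-function; the paper instead uses the same heat-semigroup comparison in $\mathbb{R}^d$ (Lemma~\ref{1209.2}), driven by the explicit estimates $|\widehat\mu(\xi)-1| \lesssim (|\xi|/\sqrt d)^2$ and $|\widehat\mu(\xi)| \lesssim (|\xi|/\sqrt d)^{-1/2}$, together with Junge--Xu again for the Gaussian semigroup; this is more elementary and avoids a full noncommutative spherical $g$-function theory, which is not currently available dimension-free. Second, for the transference of main terms you speak of a single periodic lifting; the correct noncommutative sampling principle (Proposition~\ref{1206.1}) requires the periodized multiplier construction $(m_\lambda)^q_{per}$, the reality of the convolution kernel (so that selfadjointness can be exploited via~\eqref{250427.2}), a rescaling lemma (Lemma~\ref{1203.4}), and a decomposition into residue classes mod $q$ --- none of which is automatic in the operator-valued setting. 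Your Plancherel/square-function treatment of the error term and the reduction from $p\in[2,\infty]$ to $p=2$ by interpolation do match the paper.
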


 Theorem \ref{0109.5} extends the result of Mirek et al. \cite{mirek2023dimension} to the semi-commutative setting. To get (\ref{250606.1}), we need numerous improvement and modifications based on the main idea of \cite{mirek2023dimension} to overcome the difficulties due to noncommutativity. We first apply the noncommutative complex interpolation theorem to reduce (\ref{250606.1}) to the case $p=2$. This combined with the noncommutative transference principle and  dimension-free estimate for the semigroup $P_{t}$ on $\mathbb{Z}^d$ concludes the proof.
  
  
	The rest of paper is organized as follows.  In section \ref{1222.1}, we review   preliminaries on the noncommutative $L_{p}$ space and the vector-valued noncommutative $L_{p}$ space. In section  \ref{250414.1},   we  introduce  two useful tools:  dimension-free estimate for the semigroup $P_{t}$ on $\mathbb{Z}^d$, and  noncommutative analogue corresponding to the sampling principle in \cite[Corollary 2.1]{Magyar}. Section \ref{0118.4} is devoted to proving Theorem \ref{0109.5}, more precisely, the  dimension-free estimate of the semi-commutative discrete  spherical maximal operator on $L_{p}(\mathcal{N})$ for $2\leq p\leq \infty$.

\vspace{0.1cm}

\textbf{Notation}
  The letter  $C$  denotes a positive constant independent of the variables, while it may vary from line to line.  $A \lesssim B$ means $A \leq C B$ for some absolute  constant $C$ and $A \lesssim^d B$ means $A \leq C^d B$ for some absolute  constant $C$.  Given $N \in \mathbb{N}$, we set $\mathbb{N}_{N} =\left\{1,2,\cdots,N\right\}$ and $\mathbb{N}_{N}^{d}=\mathbb{N}_{N}\times\cdots\times \mathbb{N}_{N}$ with $d$-tuples product. The floor function $\lfloor t\rfloor=\max\left\{n\in\mathbb{Z}:n\leq t\right\}$ denotes the integer part of $t$. Given  a set $E\subset\mathbb{R}^d$,  its characteristic function is denoted by  $\chi_{E}$. Notation LHS (resp. RHS) stands for left hand side (rsep. right hand side ) of an expression. The symbol  $\sigma $ represents  the canonical surface measure on the unit sphere $\mathbb{S} $, and let
  \begin{align}\label{250413.1}
  	\mu  =\frac{1}{\sigma (\mathbb{S} )}
  \end{align}
  be its  normalization.  The torus $\mathbb{T}^d$ is a priori endowed with the periodic norm: 
 \begin{align*}
 	||x||= \Big(\sum_{j=1}^d ||x_j||^2\Big)^{1/2},
 \end{align*}
 where $||x_j|| = \operatorname{dist}(x_j, \mathbb{Z})$ for $j \in \mathbb{N}_d$.  We identify the $d$-dimensional torus $\mathbb{T}^d$  with the unit cube  $Q = [-{1}/{2},  {1}/{2})^d$. Observe that for $\xi \in Q$,  $||\xi||$ coincides with the Euclidean norm $|\xi|$.  Furthermore, we obtain
 \begin{align}\label{250424.5}
 	2\|\eta\|\leq|\sin(\pi\eta)|\leq \pi\|\eta\|,\quad \eta \in \mathbb{T},
 \end{align}  
 since $|\sin(\pi\eta)|=\sin(\pi\|\eta\|)$ and $2|\eta|\leq |\sin(\pi\eta)|\leq \pi\eta,\mbox{ for } 0\leq|\eta|\leq {1}/{2}.$ For $x \in \mathbb{R}^d$,  let $\llbracket x \rrbracket $ be the unique vector in $\mathbb{Z}^d$ such that $x - \llbracket x \rrbracket \in Q$. In particular, for $\xi \in Q$, one has $\llbracket \xi \rrbracket = 0$. Let $f$ be a function on $\mathbb{Z}^d$, its   Fourier transform   $\widehat{f}$  is given by:
    \begin{align*}
     \widehat{f}(\xi)=\sum_{x\in\mathbb{Z}^d}f(x)e^{-2\pi \mathrm{i} \langle x,\xi \rangle},\quad \xi\in\mathbb{T}^d.
    \end{align*}
    For a function $f$ on $\mathbb{T}^d$, its inverse Fourier transform   $\mathcal{F}^{-1}f$   is defined by:
    \begin{align*}
    	(\mathcal{F}^{-1}f)(x)=\int_{\mathbb{T}^{d}}f(\xi)e^{2\pi \mathrm{i} \langle x,\xi \rangle}d\xi,\quad x\in\mathbb{Z}^d.
    \end{align*}

\section{Preliminaries}\label{1222.1}
\textbf{2.1. Noncommutative $L_{p}$ space}. Let $\mathcal{M}$ be a von Neumann algebra equipped with a normal semifinite faithful trace $\tau$  and $\mathcal{M}_{+}$ be the positive part of $\mathcal{M}$.  Denoted by $\mathcal{S}_{+}(\mathcal{M})$ the set of     $x\in\mathcal{M}_{+}$ with  $\tau(s(x)) <\infty$, where $s(x)$ is the smallest projection $e$ satisfying $exe=x$.  $\mathcal{S}(\mathcal{M})$ is the linear span of $\mathcal{S}_{+}(\mathcal{M})$. Given $1\leq p<\infty$, we
define
\begin{align*}
	\|x\|_{L_{p}(\mathcal{M})}=\big(\tau(|x|^p)\big)^{{1}/{p}},\quad x\in \mathcal{S}(\mathcal{M}),
\end{align*}
where $|x|=(x^{*}x)^{{1}/{2}}$ is the modulus of $x$. Then $(\mathcal{S}(\mathcal{M}),\|\cdot\|_{L_{p}(\mathcal{M})})$ is a normed space, whose
completion is the noncommutative $L_{p}$ space associated with $(\mathcal{M}, \tau )$, denoted  
by $L_{p}(\mathcal{M})$.  Notation $L_{p}(\mathcal{M})_{+}$ is the positive part of $L_{p}(\mathcal{M})$. For convenience, we set $L_{\infty}(\mathcal{M})=\mathcal{M}$ equipped with the operator norm $\|\cdot\|_{\infty}$.

	In this paper, we are interested in   the tensor von
Neumann algebra $\mathcal{N}=\ell_{\infty}(\mathbb{Z}^d)\overline{\otimes}\mathcal{M}$ equipped with the  trace $\sum\otimes\tau$. Define $L_{p}({\mathcal{N}})$ as the semi-commutative  space   associated with $(\mathcal{{N}}, \sum\otimes\tau)$,  which  isometrically coincides with $L_{p}(\mathbb{Z}^d;L_{p}(\mathcal{M}))$,  the Bochner  $L_{p}$ space on $\mathbb{Z}^d$  with values in $L_{p}(\mathcal{M})$. Given $f\in L_{2}(\mathcal{N})$, the   operator-valued version of the Plancherel formula 
\begin{align}\label{250609.1}
	 \|f\|_{L_{2}(\mathcal{N})}=\|\widehat{f}\,\|_{L_{2}(  L_{\infty}(\mathbb{T}^d)\overline{\otimes}\mathcal{M})}, 
\end{align}
is essential for us, which is a consequence of the fact that  $L_{2}(\mathcal{N})$ is a Hilbert space.

\noindent\textbf{2.2. Noncommutative maximal functions}. A fundamental object  of this paper is the vector-valued noncommutative $L_{p}$ space $L_{p}(\mathcal{M};\,\ell_{\infty})$ introduced by Pisier \cite{MR1648908} and Junge \cite{MR1916654}. 
	\begin{definition}\label{24926.1}
	Given $1\leq p\leq\infty$, $L_{p}(\mathcal{M};\,\ell_{\infty})$ is the space of all sequences $x=(x_{n})_{n\in\mathbb{Z}}$ in $L_{p}(\mathcal{M})$ which admit factorizations of the following form: there are $a,b\in L_{2p}(\mathcal{M}) $ and a bounded sequence $y=(y_{n})_{n\in\mathbb{Z}}\subset L_{\infty}(\mathcal{M})$ such that $x_{n}=ay_{n}b$ for all $n\in\mathbb{Z}$. The norm of $x$ in $L_{p}(\mathcal{M};\,\ell_{\infty})$ is given by
	\begin{align*}
		\|x\|_{L_{p}(\mathcal{M};\,\ell_{\infty})}=\inf\left\{\|a\|_{L_{2p}(\mathcal{M})}\sup_{ n\in\mathbb{Z}}\|y_{n}\|_{\infty}\|b\|_{L_{2p}(\mathcal{M})}\right\},
	\end{align*}
	where the infimum is taken over all factorizations of $x= (x_{n})_{n\in\mathbb{Z}}=(ay_{n}b)_{n\in\mathbb{Z}}$ as above.
\end{definition}
  It is well known that $L_{p}(\mathcal{M};\,\ell_{\infty})$ is a Banach space equipped with the norm $\|\cdot\|_{L_{p}(\mathcal{M};\,\ell_{\infty})}$.  
For a more intuitive notation,  $\|x\|_{L_{p}(\mathcal{M};\,\ell_{\infty})}$ is often  denoted   by $\|\sup x_{n}\|_{L_{p}(\mathcal{M})}$. We
should point out that $\sup x_{n}$ is just a notation and it does not make any sense in the noncommutative setting. 

 To get a better understanding on  $L_{p}(\mathcal{M};\,\ell_{\infty})$,  let us consider a sequence of selfadjoint operators $x=(x_{n})_{n\in \mathbb{Z}}$  in $L_{p}(\mathcal{M})$. It was shown in \cite[Remark 4]{MR3079331} that   $x\in L_{p}(\mathcal{M};\,\ell_{\infty})$
if and only if there is a positive operator $a\in L_{p}(\mathcal{M})_{+}$ such that $-a\leq x_{n}\leq a $  for all $n\in \mathbb{Z}$, and moreover,
\begin{align}\label{250718.1}
	\|\sup_{n\in \mathbb{Z}}x_{n}\|_{L_{p}(\mathcal{M})}=\inf\left\{\|a\|_{L_{p}(\mathcal{M})}:a\in L_{p}(\mathcal{M})_{+},\,\,-a\leq x_{n}\leq a,\,\, \forall \,n\,\in \mathbb{Z} \right\}.
\end{align}

	More generally, if  $\Lambda$  is an arbitrary index set,   $L_{p}(\mathcal{M};\,l_{\infty}(\Lambda))$ is defined by the space of all sequences $x=(x_{\lambda})_{\lambda\in \Lambda}$ in $L_{p}(\mathcal{M})$ which admit   factorizations of the following form: there are $a,b\in L_{2p}(\mathcal{M}) $ and a bounded sequence $y=(y_{\lambda})_{\lambda\in \Lambda}\subset L_{\infty}(\mathcal{M})$ such that $x_{\lambda}=ay_{\lambda}b$ for all $\lambda\in \Lambda$. The norm of $x$ in $L_{p}(\mathcal{M};\,l_{\infty}(\Lambda))$ is given by
\begin{align*}
	\|x\|_{L_{p}(\mathcal{M};\,l_{\infty}(\Lambda))}=\inf_{x_{\lambda}=ay_{\lambda}b}\left\{\|a\|_{L_{2p}(\mathcal{M})}\sup_{ \lambda\in \Lambda}\|y_{\lambda}\|_{\infty}\|b\|_{L_{2p}(\mathcal{M})}\right\}.
\end{align*} 
	If $x=(x_{\lambda})_{\lambda\in \Lambda}$ is a sequence of selfadjoint operators in $L_{p}(\mathcal{M})$, then $\|x\|_{L_{p}(\mathcal{M};\,l_{\infty}(\Lambda))}$ has the similar property as (\ref{250718.1}), i.e.,
\begin{align}\label{250427.2} 
	\|x\|_{L_{p}(\mathcal{M};\,l_{\infty}(\Lambda))}=\inf\left\{\|a\|_{L_{p}(\mathcal{M})}:~a\in L_{p}(\mathcal{M})_{+},~\-a \leq x_{\lambda}\leq a, ~\forall \,\lambda\in \Lambda \right\}.
\end{align}

The space $L_{p}(\mathcal{M};\,\ell_{\infty})$  behaves well with respect to complex interpolation. 

\begin{theorem}[\cite{MR2276775}]\label{0.4}
	Let  $1\leq p_{0}<p< p_{1}\leq\infty$ and $ 0<\theta<1$ be such that $ {1}/{p}= {(1-\theta)}/{p_{0}}+ {\theta}/{p_{1}}$. The noncommutative complex interpolation holds
	\begin{align*}
		L_{p}(\mathcal{M};\,\ell_{\infty})=\big(L_{p_{0}}(\mathcal{M};\,\ell_{\infty}),L_{p_{1}}(\mathcal{M};\,\ell_{\infty})\big)_{\theta}\quad\mbox{with equal norms}.
	\end{align*}
\end{theorem}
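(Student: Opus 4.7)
I would follow the blueprint sketched by the author just below the theorem: first reduce to the endpoint $p=2$ by complex interpolation, then attack $p=2$ through a Fourier-side Magyar-Stein-Wainger-type decomposition combined with the noncommutative transference principle and the dimension-free maximal estimate for the semigroup $P_t$ on $\mathbb{Z}^d$ that is set up in Section~\ref{250414.1}.

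The reduction is the easier half. At $p=\infty$, since $\mathcal{A}_t^d$ is a positive convolution contraction (a normalized average of translations), for self-adjoint $f\in L_\infty(\mathcal{N})$ the operator inequality $-\|f\|_{L_\infty(\mathcal{N})}\mathbf{1}\le \mathcal{A}_t^d f\le \|f\|_{L_\infty(\mathcal{N})}\mathbf{1}$ holds uniformly in $t\in\mathbb{D}$; combined with the characterization (\ref{250427.2}) and the standard four-term decomposition of a general element into self-adjoint pieces, this yields $\|\sup_{t\in\mathbb{D}}\mathcal{A}_t^d f\|_{L_\infty(\mathcal{N})}\lesssim \|f\|_{L_\infty(\mathcal{N})}$ with an absolute constant. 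By Theorem~\ref{0.4} applied with endpoints $p_0=2$ and $p_1=\infty$, it then suffices to prove (\ref{250606.1}) for $p=2$ with a dimension-free constant.

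For $p=2$ I would pass to the Fourier side via the operator-valued Plancherel identity (\ref{250609.1}), writing $\widehat{\mathcal{A}_t^df}=m_t\,\widehat{f}$ for the scalar multiplier $m_t$ on $\mathbb{T}^d$. Following Magyar-Stein-Wainger and the dimension-free refinement in \cite{mirek2023dimension}, I would split
\begin{align*}
  m_t=\sum_{s\ge 0}m_t^{(s)},
\end{align*}
where $m_t^{(s)}$ is supported on narrow arcs around rationals with denominators of size $\sim 2^s$, and admits an approximation of the form (Gauss-sum factor) $\times$ (rescaled and localized piece of the continuous spherical symbol $\widehat{\mu_t}$, with $\mu$ as in (\ref{250413.1})), plus an error with summable-in-$s$ decay. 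Via the noncommutative transference principle from Section~\ref{250414.1}, the contribution of the continuous-symbol factor is transported to a maximal inequality for $\mathbf{A}_t^d$ on $L_2(L_\infty(\mathbb{R}^d)\overline{\otimes}\mathcal{M})$; this in turn is controlled in a dimension-free manner by comparison with the Poisson-type semigroup $P_t$, whose dimension-free $L_2(\mathcal{N};\ell_\infty)$ estimate is the other ingredient established in Section~\ref{250414.1}. Standard Gauss-sum cancellation on the arithmetic side provides $s$-summable norms, so the pieces recombine into the desired $L_2(\mathcal{N};\ell_\infty(\mathbb{D}))$ bound.

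The main obstacle I expect is the absence of a genuine pointwise maximum in the semi-commutative setting: I cannot dominate $|\mathcal{A}_t^d f|$ by a scalar-valued sup, and every estimate must instead exhibit an honest operator majorant $a\in L_p(\mathcal{N})_+$ in the sense of (\ref{250427.2}). In particular, triangle-type splittings $m_t=\sum_s m_t^{(s)}$ have to be re-established as operator inequalities in $L_2(\mathcal{N};\ell_\infty)$, and the transference from discrete to continuous must tensorize carefully with $\mathcal{M}$ so that both the Pisier-Junge $\ell_\infty$-structure and the dimension-free character of the $P_t$ estimate survive the transfer. Once this noncommutative bookkeeping is in place, the $p=2$ dimension-free inequality follows, and interpolation via Theorem~\ref{0.4} delivers (\ref{250606.1}) for the full range $2\le p\le\infty$.
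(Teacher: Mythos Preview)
You are proving the wrong statement. Theorem~\ref{0.4} is the Junge--Xu complex interpolation theorem for the spaces $L_p(\mathcal{M};\ell_\infty)$, quoted from \cite{MR2276775}; the paper does not prove it and uses it only as a black box. Your proposal, by contrast, is a sketch of the proof of Theorem~\ref{0109.5} (the dimension-free maximal inequality for $\mathcal{A}_t^d$), in which Theorem~\ref{0.4} appears merely as the interpolation step reducing to $p=2$. Nothing in your write-up addresses the actual content of Theorem~\ref{0.4}, namely that $\big(L_{p_0}(\mathcal{M};\ell_\infty),L_{p_1}(\mathcal{M};\ell_\infty)\big)_\theta = L_p(\mathcal{M};\ell_\infty)$ with equal norms.

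If the intent was really to prove Theorem~\ref{0.4}, the argument is of a completely different nature: one works with the Pisier--Junge factorization $x_n=ay_nb$, realizes $L_p(\mathcal{M};\ell_\infty)$ as a suitable Haagerup tensor product or amalgamated $L_p$-module, and interpolates the three factors $L_{2p}(\mathcal{M})$, $\ell_\infty(L_\infty(\mathcal{M}))$, $L_{2p}(\mathcal{M})$ via Kosaki-type complex interpolation; this is carried out in \cite{MR2276775} and has nothing to do with spherical averages, Fourier multipliers, transference, or the semigroup $P_t$. If instead you meant to sketch Theorem~\ref{0109.5}, then your outline is broadly aligned with the paper's strategy, but it should be submitted as a proof of that theorem, not of Theorem~\ref{0.4}.
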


 The following easy facts are  used for further study; we prove them here for completeness. 
\begin{lemma}\label{0109.1}
	Let $1\leq p\leq\infty$ and $f=(f_{n})_{n}\in L_{p}(\mathcal{{N}};\,\ell_{\infty})$. 
	\begin{itemize}
		\item[(i)] For any fixed $t\in\mathbb{R}^d$,   we have 
		\begin{align*}
			\|\sup_{n}e^{2\pi \mathrm{i}\langle t,\cdot\rangle}f_{n}(\cdot)\|_{L_{p}(\mathcal{N})}=	\|\sup_{n}f_{n}\|_{L_{p}(\mathcal{N})}.
		\end{align*}
		\item[(ii)] Given a bounded sequence  $(\beta_{n})_{n}\in\mathbb{C}$,  we have 
			\begin{align*}
				\|\sup_{n}\beta_{n}f_{n}\|_{L_{p}(\mathcal{N})}\leq 	\sup_{n}|\beta_{n}|\cdot\|\sup_{n}f_{n}\|_{L_{p}(\mathcal{N})}.
			\end{align*}
		\end{itemize}
\end{lemma}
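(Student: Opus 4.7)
The plan is to work directly from the factorization definition of the $L_{p}(\mathcal{N};\,\ell_{\infty})$ norm (Definition \ref{24926.1}), since both statements turn out to be structural properties of that norm rather than deep analytic facts. I do not expect any substantial obstacle; the key is to exploit that the multiplier in (i) is a central unitary of $\mathcal{N}$, and that the multipliers in (ii) are uniformly bounded scalars that can be absorbed into the $L_{\infty}$ factor.

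For part (i), the first step is to note that the function $u(x) = e^{2\pi \mathrm{i} \langle t, x \rangle}$, viewed as an element of $\ell_{\infty}(\mathbb{Z}^{d}) \otimes 1 \subset \mathcal{N}$, is a unitary (it has modulus $1$) and lies in the center of $\mathcal{N}$, because $\ell_{\infty}(\mathbb{Z}^{d})$ is commutative and acts diagonally on $L_{p}(\mathbb{Z}^{d}; L_{p}(\mathcal{M}))$. Given any admissible factorization $f_{n} = a y_{n} b$ with $a, b \in L_{2p}(\mathcal{N})$ and $(y_{n}) \subset L_{\infty}(\mathcal{N})$, I write
\begin{align*}
    u f_{n} = (u a)\, y_{n}\, b.
\end{align*}
Since $u$ is unitary, $|u a|^{2} = a^{*} u^{*} u a = a^{*} a$, hence $\|u a\|_{L_{2p}(\mathcal{N})} = \|a\|_{L_{2p}(\mathcal{N})}$, while $\|y_{n}\|_{\infty}$ and $\|b\|_{L_{2p}(\mathcal{N})}$ are unchanged. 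Taking the infimum over factorizations yields $\|\sup_{n} u f_{n}\|_{L_{p}(\mathcal{N})} \leq \|\sup_{n} f_{n}\|_{L_{p}(\mathcal{N})}$. Applying the same argument with $u$ replaced by $\bar{u}$ to the sequence $(u f_{n})_{n}$ gives the reverse inequality and thus equality.

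For part (ii), I take any factorization $f_{n} = a y_{n} b$ and simply absorb the scalars $\beta_{n}$ into the $L_{\infty}$ piece:
\begin{align*}
    \beta_{n} f_{n} = a\, (\beta_{n} y_{n})\, b.
\end{align*}
Because $\|\beta_{n} y_{n}\|_{\infty} \leq |\beta_{n}| \|y_{n}\|_{\infty}$, we obtain $\sup_{n} \|\beta_{n} y_{n}\|_{\infty} \leq \sup_{n} |\beta_{n}| \cdot \sup_{n} \|y_{n}\|_{\infty}$. Plugging this into the defining expression of the norm and taking the infimum over factorizations produces
\begin{align*}
    \|\sup_{n} \beta_{n} f_{n}\|_{L_{p}(\mathcal{N})} \leq \sup_{n} |\beta_{n}| \cdot \|\sup_{n} f_{n}\|_{L_{p}(\mathcal{N})},
\end{align*}
which is the asserted inequality. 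Both parts therefore reduce to elementary manipulations of the factorization definition; the only point that requires a moment of care is the verification that $e^{2\pi \mathrm{i} \langle t, \cdot \rangle}$ is central and unitary in $\mathcal{N}$, which is immediate from the tensor product structure.
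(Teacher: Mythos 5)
Your proof is correct and follows essentially the same route as the paper: both arguments work directly from the factorization definition, absorbing the unimodular function into the left factor $a$ for (i) and the bounded scalars $\beta_n$ into the $L_\infty$ sequence $(y_n)$ for (ii), then passing to the infimum over factorizations (the paper phrases this with a $\delta$-approximation of the infimum, but this is a cosmetic difference). Your explicit remark that $u=e^{2\pi\mathrm{i}\langle t,\cdot\rangle}$ is a central unitary of $\mathcal{N}$ is a clean way to justify the norm preservation $\|ua\|_{L_{2p}(\mathcal{N})}=\|a\|_{L_{2p}(\mathcal{N})}$ that the paper leaves tacit.
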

\begin{proof}
	(i) Since  $f=(f_{n})_{n}\in L_{p}(\mathcal{{N}};\ell_{\infty})$,  for any $\delta>0$, there exist $a,b\in L_{2p}(\mathcal{N})$ and $(y_{n})_{n}\subset L_{\infty}(\mathcal{N})$ such that $f_{n}=ay_{n}b$  for all $n$, with 
\begin{align*}
	\|a\|_{L_{2p}(\mathcal{N})}\sup_{n}\|y_{n}\|_{\infty}\|b\|_{L_{2p}(\mathcal{N})}\leq \|\sup_{n}f_{n}\|_{L_{p}(\mathcal{N})}+\delta. 
\end{align*}
Considering the sequence  $(e^{2\pi \mathrm{i}\langle t,\cdot\rangle}f_{n}(\cdot))_{n}$,  each term admits the decomposition
\begin{align*}
	e^{2\pi \mathrm{i}\langle t,\cdot\rangle}f_{n}(\cdot)=e^{2\pi \mathrm{i}\langle t,\cdot\rangle}a(\cdot)y_{n}(\cdot)b(\cdot),\quad \forall n.
\end{align*} 
By Definition \ref{24926.1}, we get
\begin{align*}
	\|\sup_{n}e^{2\pi \mathrm{i}\langle t,\cdot\rangle}f_{n}(\cdot)\|_{L_{p}(\mathcal{N})}\leq&\|e^{2\pi \mathrm{i}\langle t,\cdot\rangle}a(\cdot)\|_{L_{2p}(\mathcal{N})}\sup_{n}\|y_{n}\|_{\infty}\|b\|_{L_{2p}(\mathcal{N})}\\
	=&\|a\|_{L_{2p}(\mathcal{N})}\sup_{n}\|y_{n}\|_{\infty}\|b\|_{L_{2p}(\mathcal{N})}\\
	\leq&\|\sup_{n}f_{n}\|_{L_{p}(\mathcal{N})}+\delta.
\end{align*}
The arbitrariness of $\delta$ implies that
\begin{align*}
	\|\sup_{n}e^{2\pi \mathrm{i}\langle t,\cdot\rangle}f_{n}(\cdot)\|_{L_{p}(\mathcal{N})}\leq \|\sup_{n}f_{n}\|_{L_{p}(\mathcal{N})}.
\end{align*}
The reverse inequality follows by the same argument.
	
	(ii) This proof is similar to (i). Hence, we omit the details here.
\end{proof}


\begin{lemma}\label{1203.4}
	Let $1\leq p<\infty$ and $f=(f_{n})_{n}\in L_{p}( L_{\infty}(\mathbb{R}^d)\overline{\otimes}\mathcal{M};\,\ell_{\infty})$.  For any $s>0$,    we have 
	\begin{align*}
		\|\sup_{n}f_{n} ({\cdot}/{s})\|_{L_{p}( L_{\infty}(\mathbb{R}^d)\overline{\otimes}\mathcal{M})}=s^{\frac{d}{p}}\|\sup_{n}f_{n}\|_{L_{p}( L_{\infty}(\mathbb{R}^d)\overline{\otimes}\mathcal{M})}.
	\end{align*}
\end{lemma}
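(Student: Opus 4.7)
The plan is to reduce the claim to the factorization definition of the vector-valued norm (Definition \ref{24926.1}) together with the obvious scaling of the standard noncommutative $L_q$ norm on $L_\infty(\mathbb{R}^d)\overline{\otimes}\mathcal{M}$. The key observation is that the trace on $L_\infty(\mathbb{R}^d)\overline{\otimes}\mathcal{M}$ is the tensor product of Lebesgue measure with $\tau$, so the change of variables $x\mapsto x/s$ produces a Jacobian factor $s^d$, which translates to a scaling factor $s^{d/q}$ on the $L_q$ norm of any operator-valued function; this is commutative in character and can be verified on $\mathcal{S}(L_\infty(\mathbb{R}^d)\overline{\otimes}\mathcal{M})$ and extended by density.

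For the upper bound, I would fix $\delta>0$ and choose an almost-optimal factorization $f_n=ay_nb$ with $a,b\in L_{2p}(L_\infty(\mathbb{R}^d)\overline{\otimes}\mathcal{M})$ and $(y_n)_n\subset L_\infty(L_\infty(\mathbb{R}^d)\overline{\otimes}\mathcal{M})$ satisfying
\[
\|a\|_{L_{2p}}\sup_n\|y_n\|_\infty\|b\|_{L_{2p}}\le \|\sup_n f_n\|_{L_p(L_\infty(\mathbb{R}^d)\overline{\otimes}\mathcal{M})}+\delta.
\]
Then $f_n(\cdot/s)=a(\cdot/s)\,y_n(\cdot/s)\,b(\cdot/s)$ is again a valid factorization, and I compute the three scaling factors separately: $\|a(\cdot/s)\|_{L_{2p}}=s^{d/(2p)}\|a\|_{L_{2p}}$, the same for $b$, while $\sup_n\|y_n(\cdot/s)\|_\infty=\sup_n\|y_n\|_\infty$ since the essential supremum is translation and dilation invariant. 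Multiplying yields the upper bound $\|\sup_n f_n(\cdot/s)\|_{L_p}\le s^{d/p}(\|\sup_n f_n\|_{L_p}+\delta)$, and letting $\delta\to 0$ gives $\le s^{d/p}\|\sup_n f_n\|_{L_p}$.

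For the matching lower bound, I would apply exactly the same argument to the rescaled sequence $g_n(x):=f_n(x/s)$ with dilation parameter $1/s$, which gives $\|\sup_n g_n(s\,\cdot)\|_{L_p}\le s^{-d/p}\|\sup_n g_n\|_{L_p}$, i.e.\ $\|\sup_n f_n\|_{L_p}\le s^{-d/p}\|\sup_n f_n(\cdot/s)\|_{L_p}$. Combining both directions gives equality. There is no real obstacle here: the only point to check carefully is the scaling of the $L_{2p}$ norm of an operator-valued function, which follows from $\tau_{L_\infty(\mathbb{R}^d)\overline{\otimes}\mathcal{M}}(|a(\cdot/s)|^{2p})=\int_{\mathbb{R}^d}\tau(|a(x/s)|^{2p})\,dx=s^d\int_{\mathbb{R}^d}\tau(|a(y)|^{2p})\,dy$ by Fubini and a straightforward change of variables.
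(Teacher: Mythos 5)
Your proposal is correct and follows essentially the same route as the paper: both proofs fix an almost-optimal factorization $f_n=ay_nb$ from Definition \ref{24926.1}, observe that rescaling all three factors gives a valid factorization of $f_n(\cdot/s)$ with $\|a(\cdot/s)\|_{L_{2p}}=s^{d/(2p)}\|a\|_{L_{2p}}$ (and similarly for $b$), while the $L_\infty$ norms of the $y_n$ are unchanged, and then let $\delta\to 0$. You are slightly more explicit about the reverse inequality (applying the bound with dilation parameter $1/s$), whereas the paper simply says it "follows by the same argument," but the content is identical.
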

\begin{proof}
	Since  $f\in L_{p}( L_{\infty}(\mathbb{R}^d)\overline{\otimes}\mathcal{M};\,\ell_{\infty})$,  for any $\delta>0$, there exist  $a,b\in L_{2p}( L_{\infty}(\mathbb{R}^d)\overline{\otimes}\mathcal{M})$ and $(y_{n})_{n}\subset  L_{\infty}(\mathbb{R}^d)\overline{\otimes}\mathcal{M}$ such that $f_{n}=a y_{n}b$  for all $n$, with 
	\begin{align*}
		\|a\|_{L_{2p}( L_{\infty}(\mathbb{R}^d)\overline{\otimes}\mathcal{M})}\sup_{n} \|y_{n}\|_{\infty}\,\, \|b \|_{L_{2p}( L_{\infty}(\mathbb{R}^d)\overline{\otimes}\mathcal{M})}\leq 	\|\sup_{n}f_{n} \|_{L_{p}( L_{\infty}(\mathbb{R}^d)\overline{\otimes}\mathcal{M})}+\delta.
	\end{align*}
	A direct computation shows that
	\begin{align*}
		 \|\sup_{n}f_{n} ( {\cdot}/{s}& ) \|_{L_{p}( L_{\infty}(\mathbb{R}^d)\overline{\otimes}\mathcal{M})}\\
		\leq&  \|a ( {\cdot}/{s})\|_{L_{2p}( L_{\infty}(\mathbb{R}^d)\overline{\otimes}\mathcal{M})}\sup_{n} \|y_{n} ({\cdot}/{s})\|_{\infty}\,\, \|b ( {\cdot}/{s})\|_{L_{2p}( L_{\infty}(\mathbb{R}^d)\overline{\otimes}\mathcal{M})} \\
		=&s^{\frac{d}{p}} \|a\|_{L_{2p}( L_{\infty}(\mathbb{R}^d)\overline{\otimes}\mathcal{M})}\sup_{n}\|y_{n}\|_{\infty}\,\|b\|_{L_{2p}( L_{\infty}(\mathbb{R}^d)\overline{\otimes}\mathcal{M})} \\
		\leq &s^{\frac{d}{p}}(\|\sup_{n}f_{n}\|_{L_{p}( L_{\infty}(\mathbb{R}^d)\overline{\otimes}\mathcal{M})}+\delta).
	\end{align*}
	Since $\delta$ is  arbitrary, we conclude that
	\begin{align*}
			\|\sup_{n}f_{n} ({\cdot}/{s})\|_{L_{p}( L_{\infty}(\mathbb{R}^d)\overline{\otimes}\mathcal{M})}\leq s^{\frac{d}{p}}\|\sup_{n}f_{n}\|_{L_{p}( L_{\infty}(\mathbb{R}^d)\overline{\otimes}\mathcal{M})}.
	\end{align*}
	The reverse inequality follows by the same argument.
\end{proof}

\noindent\textbf{2.3. Noncommutative square functions}.
	To introduce  the noncommutative square function,	we  first recall the   column and row   spaces. For a finite sequence $ (x_{n})_{n}$ in $L_{p}(\mathcal{M})$ with $1\leq p\leq\infty$,    define 
\begin{align*}
	\|(x_{n})_{n}\|_{L_{p}(\mathcal{M};\,\ell_{2}^{c})}=\bigg\|\bigg(\sum_{n}|x_{n}|^2\bigg)^{ {1}/{2}}\bigg\|_{L_{p}(\mathcal{M})},\quad  	\|(x_{n})_{n}\|_{L_{p}(\mathcal{M};\,\ell_{2}^{r})}=\bigg\|\bigg(\sum_{n}|x_{n}^{*}|^2\bigg)^{ {1}/{2}}\bigg\|_{L_{p}(\mathcal{M})}.
\end{align*}
Notice that if $p\neq2$, above two norms are not comparable. The column  space $L_{p}(\mathcal{M};\,\ell_{2}^{c})$ (resp. the row space $L_{p}(\mathcal{M};\,\ell_{2}^{r})$) is the   completion of all finite sequences in $L_{p}(\mathcal{M})$ with  respect to $\|\cdot\|_{L_{p}(\mathcal{M};\,\ell_{2}^{c})}$ (resp. $\|\cdot\|_{L_{p}(\mathcal{M};\,\ell_{2}^{r})}$).  
The noncommutative square function space $L_{p}(\mathcal{M};\,\ell_{2}^{cr})$ is defined as follows:
if $p\geq 2$,  $L_{p}(\mathcal{M};\,\ell_{2}^{cr})=L_{p}(\mathcal{M};\,\ell_{2}^{r})\cap L_{p}(\mathcal{M};\,\ell_{2}^{c})$ equipped with the intersection norm:
\begin{align*}
	\|(x_{n})_{n}\|_{L_{p}(\mathcal{M};\,\ell_{2}^{cr})}=\max\left\{\|(x_{n})_{n}\|_{L_{p}(\mathcal{M};\,\ell_{2}^{r})},\,\|(x_{n})_{n}\|_{L_{p}(\mathcal{M};\,\ell_{2}^{c})}   \right\};		
\end{align*}
if $1\leq p< 2$,  $L_{p}(\mathcal{M};\,\ell_{2}^{cr})=L_{p}(\mathcal{M};\,\ell_{2}^{r})+ L_{p}(\mathcal{M};\,\ell_{2}^{c})$ equipped with the sum norm:
\begin{align*}
	\|(x_{n})_{n}\|_{L_{p}(\mathcal{M};\,\ell_{2}^{cr})}=\inf\left\{\|(y_{n})_{n}\|_{L_{p}(\mathcal{M};\,\ell_{2}^{r})}+\|(z_{n})_{n}\|_{L_{p}(\mathcal{M};\,\ell_{2}^{c})}   \right\},		
\end{align*}
where the infimum runs over all decompositions $x_{n}=y_{n}+z_{n}$ with $y_n$ and $z_n$ in
$L_p(\mathcal{M})$.

By the definition of $L_{p}( \mathcal{M};\,\ell_{\infty})$ and $L_{p}(\mathcal{M};\,\ell_{2}^{cr})$, Hong et al.  get the following result (see \cite[Proposition 2.3]{hong2019pointwise}).
\begin{proposition}[\cite{hong2019pointwise}]\label{8.10.1}
	Let $1\leq p\leq \infty$ and $(x_{n})_{n}\in L_{p}( \mathcal{M};\,\ell_{\infty})$.   There exists an absolute
	constant $c>0$ such that  
	\begin{align*}
		\|\sup_{n} x_{n}\|_{L_{p}(\mathcal{M})}\leq c\|(x_n)_{n}\|_{L_{p}(\mathcal{M};\,l_{2}^{cr})}.
	\end{align*}	
\end{proposition}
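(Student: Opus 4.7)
The plan is to produce an explicit factorization $x_n = A w_n B$ in the sense of Definition~\ref{24926.1}, with outer factors coming directly from the row and column square functions of the sequence. I would treat first the main case $p \geq 2$, where the $\ell_2^{cr}$-norm is the intersection norm and both square functions are finite. Set
\begin{equation*}
A := \Big(\sum_m x_m x_m^*\Big)^{1/4}, \qquad B := \Big(\sum_m x_m^* x_m\Big)^{1/4}.
\end{equation*}
A direct trace identification gives $\|A\|_{L_{2p}(\mathcal{M})}^{2} = \|(x_n)\|_{L_p(\mathcal{M};\ell_2^r)}$ and $\|B\|_{L_{2p}(\mathcal{M})}^{2} = \|(x_n)\|_{L_p(\mathcal{M};\ell_2^c)}$, so $A, B \in L_{2p}(\mathcal{M})$.

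The crux is constructing a contractive middle factor. From $x_n x_n^* \leq A^4$, operator monotonicity of the square root applied twice yields $|x_n^*|^{1/2} \leq A$; symmetrically $|x_n|^{1/2} \leq B$. By the noncommutative Douglas factorization lemma one obtains contractions $c_n, d_n \in \mathcal{M}$ with
\begin{equation*}
|x_n^*|^{1/2} = A c_n, \qquad |x_n|^{1/2} = d_n B.
\end{equation*}
Combined with the symmetric polar identity $x_n = |x_n^*|^{1/2} u_n |x_n|^{1/2}$ (a consequence of the polar decomposition $x_n = u_n |x_n|$ together with $|x_n^*| = u_n |x_n| u_n^*$), this gives
\begin{equation*}
x_n = A (c_n u_n d_n) B = A w_n B, \qquad \sup_n \|w_n\|_\infty \leq 1,
\end{equation*}
since $w_n = c_n u_n d_n$ is a product of three operators of norm at most $1$. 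Plugging this factorization into Definition~\ref{24926.1} and applying AM--GM,
\begin{equation*}
\|\sup_n x_n\|_{L_p(\mathcal{M})} \leq \|A\|_{L_{2p}} \|B\|_{L_{2p}} = \sqrt{\|(x_n)\|_{L_p(\ell_2^r)}\,\|(x_n)\|_{L_p(\ell_2^c)}} \leq \|(x_n)\|_{L_p(\mathcal{M};\ell_2^{cr})},
\end{equation*}
the last step using that for $p \geq 2$ the $\ell_2^{cr}$-norm equals $\max(\|(x_n)\|_{\ell_2^r},\|(x_n)\|_{\ell_2^c})$.

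For $1 \leq p < 2$ the $\ell_2^{cr}$-norm is the sum norm. I would select a near-optimal decomposition $x_n = y_n + z_n$ with $(y_n) \in L_p(\mathcal{M};\ell_2^r)$ and $(z_n) \in L_p(\mathcal{M};\ell_2^c)$, invoke the triangle inequality for $\|\cdot\|_{L_p(\mathcal{M};\ell_\infty)}$, and bound each summand by its one-sided square-function norm via a one-sided analogue of the above Douglas factorization (using only the row operator $A_y$ for the first piece, only the column operator $B_z$ for the second). The main technical obstacle is the application of Douglas' lemma when $A$ and $B$ fail to be invertible and the defining sums are only semifinite; this is handled by the standard regularization $A \mapsto A + \epsilon I$, $B \mapsto B + \epsilon I$, carrying out the above argument, and letting $\epsilon \to 0^+$ via lower semicontinuity of the involved norms.
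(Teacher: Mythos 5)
The paper itself gives no proof of Proposition~\ref{8.10.1}; it is quoted from \cite{hong2019pointwise}, so there is nothing in-text to compare against. Your treatment of the case $p \geq 2$ is correct and is the standard argument: from $x_n x_n^* \leq A^4$ and $x_n^* x_n \leq B^4$, operator monotonicity of $t^{1/2}$ gives $|x_n^*|^{1/2} \leq A$ and $|x_n|^{1/2} \leq B$, Douglas produces contractions $c_n, d_n$, and the symmetric polar identity $x_n = |x_n^*|^{1/2}u_n|x_n|^{1/2}$ yields the factorization $x_n = A(c_n u_n d_n)B$; the resulting bound $\|\sup_n x_n\|_{L_p} \leq \sqrt{\|(x_n)\|_{\ell_2^r}\|(x_n)\|_{\ell_2^c}} \leq \max\{\cdot,\cdot\}$ is exactly what is needed for $p\geq 2$, where the $\ell_2^{cr}$-norm is the intersection norm.

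The case $1\leq p<2$, however, contains a genuine gap. After choosing a near-optimal decomposition $x_n = y_n + z_n$, you need the one-sided bounds $\|\sup_n y_n\|_{L_p}\lesssim\|(y_n)\|_{\ell_2^r}$ and $\|\sup_n z_n\|_{L_p}\lesssim\|(z_n)\|_{\ell_2^c}$, but a ``one-sided analogue of the above Douglas factorization'' does not deliver these. Concretely, for the row piece set $A_y = (\sum_m y_m y_m^*)^{1/4}$; Douglas gives $|y_n^*|^{1/2} = A_y c_n = c_n^* A_y$, and the polar identity then yields $y_n = |y_n^*|^{1/2}|y_n^*|^{1/2}u_n = A_y(c_n c_n^*)(A_y u_n)$, in which the \emph{right} outer factor $A_y u_n$ still depends on $n$. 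This is not a factorization of the form $y_n = a w_n b$ required by Definition~\ref{24926.1}, and I do not see how to repair it: replacing the right factor by $\mathbf{1}$ is not available when $\tau$ is infinite (then $\mathbf{1}\notin L_{2p}(\mathcal{M})$), and even when $\tau$ is finite it produces the wrong quantity, namely $\|A_y^2\|_{L_{2p}}=\|(\sum y_m y_m^*)^{1/2}\|_{L_{2p}}$ rather than $\|(y_n)\|_{L_p(\ell_2^r)}=\|(\sum y_m y_m^*)^{1/2}\|_{L_p}$. Passing from a one-sided square function bound to the two-sided $L_p(\mathcal{M};\ell_\infty)$-norm for $p<2$ is precisely the delicate point and needs an idea beyond Douglas. (A minor side remark: the proposed regularization $A\mapsto A+\epsilon I$ also leaves $L_{2p}(\mathcal{M})$ in the semifinite non-finite case; one should instead invoke the version of Douglas' lemma for $\tau$-measurable operators, which needs no such regularization.) Since the paper only applies this proposition with $p=2$, the part you actually proved suffices for the paper's purposes, but as a proof of the full statement for $1\leq p\leq\infty$ it is incomplete.
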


\section{Two useful tools}\label{250414.1}
In this section, we  introduce  two useful tools.  The first is the dimension-free estimate for the semigroup $P_{t}$ on $\mathbb{Z}^d$, where $t>0$ and $P_{t}$  is a convolution operator with Fourier multiplier: 
\begin{align*} 
	\mathfrak{p}_{t}(\xi)=e^{-t\sum_{k=1}^{d}\sin^2(\pi \xi_{k})},\quad\xi\in \mathbb{T}^d.
\end{align*}
The second is the noncommutative analogue corresponding to the sampling principle in \cite[Corollary 2.1]{Magyar}, which  allows one to transfer $L_{p}(L_{\infty}(\mathbb{R}^d)\overline{\otimes}\mathcal{M};\,\ell_{\infty})$ norm of a sequence of convolution operators on $\mathbb{R}^d$ to $L_{p}(\mathcal{N};\,\ell_{\infty})$ norm  of their discrete analogues on $\mathbb{Z}^d$. Both of them are crucial components in our analysis, enabling us to establish Theorem \ref{0109.5}.

\subsection{dimension-free estimate for the semigroup $P_{t}$}
Let $ e_{1},\cdots ,e_{d} $ be the standard basis in $\mathbb{Z}^d$. For every $k\in\mathbb{N}_{d}$ and $x\in \mathbb{Z}^d$,  define 
\begin{align*}
	\Delta_{k} f(x)=f(x)-f(x+e_{k})
\end{align*}
as the discrete partial derivative on $\mathbb{Z}^d$. The adjoint of $\Delta_{k}$ is  determined by  $\Delta^{*}_{k} f(x)=f(x)-f(x-e_{k})$, and  the discrete partial Laplacian is given by
\begin{align}\label{250424.1} 
	\mathcal{L}_{k}f(x)={1}/{4}\cdot\Delta^{*}_{k}\Delta_{k}f(x)= {1}/{2}\cdot f(x)- {1}/{4}\big(f(x+e_{k})+f(x-e_{k})\big).
\end{align} 
We see that  
\begin{align*}
	\widehat{\mathcal{L}_{k}f}(\xi)=\frac{1-\cos(2\pi \xi_{k})}{2}\widehat{f}(\xi)=\sin^{2}(\pi\xi_{k})\widehat{f}(\xi),\quad\xi\in \mathbb{T}^d.
\end{align*}
\begin{proposition}\label{8.8.2}
	Let $1<p<\infty$ and $f\in L_{p}(\mathcal{N})$,  there exists a constant $C_{p}>0$ independent of  the dimension  $d$ such that  
	\begin{align*}
		\|\sup_{t>0} P_{t}f\|_{L_{p}(\mathcal{N})}\leq C_{p} \|f\|_{L_{p}(\mathcal{N})}.
	\end{align*}
\end{proposition}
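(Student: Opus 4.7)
The plan is to recognise $(P_t)_{t>0}$ as (the tensor extension to $\mathcal{N}$ of) a symmetric Markov semigroup of completely positive unital contractions, and then invoke the noncommutative maximal ergodic theorem of Junge--Xu for such semigroups. The constant in that theorem depends only on $p$ and not on the generator, so dimension-freeness comes for free once the structural hypotheses are verified; in particular, no explicit calculation with the $d$-dimensional heat kernel is required.

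The first step is to check the Markov structure of $P_t$. The symbol factorises as
\begin{align*}
\mathfrak{p}_{t}(\xi)=\prod_{k=1}^{d} e^{-t\sin^{2}(\pi\xi_{k})},
\end{align*}
so the kernel $p_{t}=\mathcal{F}^{-1}\mathfrak{p}_{t}$ is a tensor product of one-dimensional kernels $q_{t}$. Writing the one-dimensional generator as $\mathcal{L}_{1}=\tfrac{1}{2}(I-A_{1})$ with $A_{1}f(x)=\tfrac{1}{2}(f(x+1)+f(x-1))$, one has $e^{-t\mathcal{L}_{1}}=e^{-t/2}\sum_{n\geq 0}(t/2)^{n}A_{1}^{n}/n!$, which is positivity preserving because $A_{1}$ is. Hence each $q_{t}$ is a non-negative function on $\mathbb{Z}$ with $\sum_{y\in\mathbb{Z}} q_{t}(y)=\mathfrak{p}_{t}(0)=1$, and consequently $p_{t}$ is a symmetric probability measure on $\mathbb{Z}^{d}$. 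Thus $P_{t}$, which is just convolution against $p_t$, is a positivity-preserving, unital, self-adjoint contraction on $\ell_{\infty}(\mathbb{Z}^{d})$, and a contraction on every $\ell_p(\mathbb{Z}^d)$ by Young's inequality.

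Tensoring with the identity on $\mathcal{M}$ extends $P_{t}$ to $\mathcal{N}$ while preserving all of these properties; complete positivity is automatic from positivity because the base algebra $\ell_{\infty}(\mathbb{Z}^d)$ is commutative. Thus $(P_{t})_{t>0}$ acts as a symmetric Markov semigroup of completely positive unital contractions on $\mathcal{N}$, and contractively on every $L_{p}(\mathcal{N})$. Applying the noncommutative Junge--Xu maximal theorem to this semigroup then yields
\begin{align*}
\|\sup_{t>0}P_{t}f\|_{L_{p}(\mathcal{N})}\leq C_{p}\|f\|_{L_{p}(\mathcal{N})}, \qquad 1<p<\infty,
\end{align*}
with $C_{p}$ depending only on $p$, which is the asserted dimension-free bound.

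The only technical point requiring care is that $L_{p}(\mathcal{M};\,\ell_{\infty})$ in Definition \ref{24926.1} is set up for countable index sets, whereas the claim quantifies over all $t>0$. I would handle this standard issue by observing that $t\mapsto P_{t}f$ is strongly continuous in $L_{p}(\mathcal{N})$, which allows the uncountable supremum to be replaced by a supremum over a countable dense set such as $\mathbb{Q}_{+}$ without affecting the norm characterisation in \eqref{250427.2}. Once this reduction is in place, the entire proof boils down to verifying the Markov hypotheses above, and the dimension-independence of $C_p$ is a direct consequence of the universal character of the Junge--Xu inequality.
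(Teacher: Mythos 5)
Your proposal is correct and follows essentially the same route as the paper: both arguments verify that $(P_t)_{t>0}$ is a symmetric Markov semigroup on $\mathcal{N}$ (the paper via the series expansion $e^{-t\mathcal{L}_k}=e^{-t/2}\sum_n (t/2)^n\mathcal{G}_k^n/n!$ for each coordinate and composing; you via identifying the kernel as a tensor product of one-dimensional probability measures, which amounts to the same computation), and then invoke the Junge--Xu noncommutative maximal ergodic theorem \cite[Cor.~5.11]{MR2276775}, whose constant is universal in $p$ and thus dimension-free. Your remark about complete positivity being automatic on the commutative factor $\ell_\infty(\mathbb{Z}^d)$ and the reduction of the supremum over $t>0$ to a countable dense set are both standard points that the paper leaves implicit, and they do not change the substance of the argument.
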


  For this proof, we recall  the maximal inequality  in \cite[Corollary 5.11]{MR2276775}. 

\begin{lemma}\label{250806.1}
	Suppose that $(T_{t})_{t>0}: \mathcal{M}\rightarrow \mathcal{M}$ is a semigroup. For every $t>0$, the operator $T_{t}$ is   linear  and  satisfies:
	\begin{itemize}
		\item[(i)] $T_{t}$ is a contraction on $\mathcal{M}:$ $\|T_{t}(x)\|_{\infty}\leq\|x\|_{\infty}$ for all $x\in \mathcal{M}$;
		
		\vspace{0.1cm}
		
		\item[(ii)] $T_{t}$ is positive\,:~$ T_{t}(x)\geq0$  if $ x\geq0$;
		
		\vspace{0.1cm}
		
		\item [(iii)] $\tau \circ T_{t}\leq \tau:$ $\tau (T_{t}(x))\leq \tau(x)$ for all $x\in L_{1}(\mathcal{M})\cap \mathcal{M}_{+}$; 
		
		\vspace{0.1cm}
		
		\item[(iv)] $T_{t}$ is symmetric relative to $\tau:$ $\tau(T_{t}(y)^{*}x)=\tau(y^{*}T_{t}(x))$ for all $x,y\in L_{2}(\mathcal{M})\cap \mathcal{M}$.
	\end{itemize}
	Then, for $1<p<\infty$, we have
	\begin{align}\label{24127.1}
		\Big\|\sup_{t>0} T_{t}(x)\Big\|_{L_{p}(\mathcal{M})}\leq C_{p} 	\|x\|_{L_{p}(\mathcal{M})},\quad x \in L_{p}(\mathcal{M}).
	\end{align}
\end{lemma}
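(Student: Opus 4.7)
The plan is to reproduce the argument of Junge–Xu, whose fundamental idea is to dilate the semigroup $(T_{t})$ to a continuous-parameter reverse martingale on a larger von Neumann algebra and then invoke Junge's noncommutative Doob maximal inequality, turning an ergodic-type statement into a martingale-type statement.

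The endpoint $p=\infty$ is essentially trivial: by (i), $\|T_{t}(x)\|_{\infty}\le\|x\|_{\infty}$ uniformly in $t>0$, so the trivial factorization $T_{t}(x)=1\cdot T_{t}(x)\cdot 1$ permitted by Definition \ref{24926.1} yields
\[
\|\sup_{t>0}T_{t}(x)\|_{L_{\infty}(\mathcal{M};\ell_{\infty})}\le\|x\|_{\infty}.
\]
Because $L_{p}(\mathcal{M};\ell_{\infty})$ interpolates by Theorem \ref{0.4}, the whole burden is to establish a single nontrivial endpoint in the range $1<p<\infty$.

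The central ingredient is Kümmerer's continuous noncommutative Rota-type dilation, whose applicability to a semigroup satisfying (i)–(iv) is guaranteed by the factorizability theory of Anantharaman-Delaroche and Haagerup–Musat: one constructs a larger finite von Neumann algebra $(\widetilde{\mathcal{M}},\widetilde{\tau})$ containing $\mathcal{M}$ trace-preservingly, a trace-preserving one-parameter $*$-automorphism group $(\alpha_{t})_{t\in\mathbb{R}}$ on $\widetilde{\mathcal{M}}$, and a normal faithful conditional expectation $E:\widetilde{\mathcal{M}}\to\mathcal{M}$ such that
\[
T_{t}(x)=E\,\alpha_{t}(x),\qquad t\ge 0,\ x\in\mathcal{M}.
\]
The subalgebras $\widetilde{\mathcal{M}}_{t}:=\alpha_{t}(\mathcal{M})\subset\widetilde{\mathcal{M}}$ form a filtration with associated conditional expectations $\mathcal{E}_{t}=\alpha_{t}E\alpha_{-t}$, and the family $(\mathcal{E}_{t}(x))_{t>0}$ is a continuous-time reverse martingale. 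Applying Junge's continuous-parameter noncommutative Doob maximal inequality gives
\[
\|\sup_{t>0}\mathcal{E}_{t}(x)\|_{L_{p}(\widetilde{\mathcal{M}};\ell_{\infty})}\le C_{p}\|x\|_{L_{p}(\mathcal{M})},\qquad 1<p\le\infty.
\]
Since each $\alpha_{t}$ is trace-preserving and $E$ is a contraction on every $L_{p}$, post-composition with these maps preserves any factorization $ay_{t}b$ used in Definition \ref{24926.1} up to a constant at most $1$, by the same manipulation underlying Lemma \ref{0109.1}(i). This transfers the martingale bound to
\[
\|\sup_{t>0}T_{t}(x)\|_{L_{p}(\mathcal{M};\ell_{\infty})}\le C_{p}\|x\|_{L_{p}(\mathcal{M})}.
\]

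The hardest step is the existence of the dilation itself: in the commutative setting Rota's dilation is the elementary construction of a Markov chain on path space, whereas in the operator-algebraic world one requires factorizability of the semigroup, the proof of which (via Stinespring-type representations, ultraproducts, and careful exploitation of symmetry (iv) and trace-preservation (iii)) is the deep part of the argument. Everything else — the continuous-parameter Doob inequality and the bookkeeping with factorizations in $L_{p}(\mathcal{M};\ell_{\infty})$ — is modular and essentially soft once that dilation is granted, and an alternative route that first treats the discrete semigroup $(T_{s}^{n})_{n}$ by a discrete Rota dilation and then lets $s\to 0$ through dyadic approximation (combined with strong continuity and Fatou-type lower semicontinuity of the $L_{p}(\mathcal{M};\ell_{\infty})$-norm) is available as a fallback if one wishes to avoid the continuous-parameter dilation.
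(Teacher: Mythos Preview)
The paper does not prove this lemma at all; it is quoted verbatim as \cite[Corollary~5.11]{MR2276775} (Junge--Xu) and used as a black box. So there is no ``paper's own proof'' to compare against, only the original Junge--Xu argument.

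Your main line contains a genuine gap. You assert that hypotheses (i)--(iv) guarantee, ``by the factorizability theory of Anantharaman-Delaroche and Haagerup--Musat'', a K\"ummerer-type dilation of $(T_t)$ to a one-parameter automorphism group $(\alpha_t)$ on a larger tracial von Neumann algebra. This is backwards: the point of the Haagerup--Musat work is precisely that factorizability is \emph{not} automatic --- they exhibit explicit unital, trace-preserving, self-adjoint, completely positive maps on matrix algebras that are \emph{not} factorizable and hence admit no such automorphism dilation. The hypotheses of the lemma are even weaker (mere positivity in (ii), not complete positivity), so the obstruction is, if anything, worse. Your principal argument therefore fails at its first substantive step, and the formula $T_t=E\alpha_t$ you write down is not available in general.

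The ``fallback'' you sketch at the end --- a discrete Rota dilation for each fixed $T_s$ combined with Doob's inequality and an approximation $s\to 0$ --- is essentially the route Junge--Xu actually take, and is the correct skeleton. Note, however, that their Rota construction does not produce an automorphism group either: it realizes $T^{2n}$ as $\widehat{E}\circ\mathcal{E}_n$ for a decreasing sequence of conditional expectations on a tensor-product path space, and it is this weaker reverse-martingale structure (not a K\"ummerer dilation) that feeds into the noncommutative Doob inequality. If you want a self-contained argument, discard the factorizability claim entirely and develop this discrete route.
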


\noindent $\mathbf{Proof~of~Proposition~\ref{8.8.2}:}$ We first show that, for every $k\in \mathbb{N}_{d}$, the operator $\mathcal{L}_{k}$  given in (\ref{250424.1}) generates a semigroup $(e^{-t\mathcal{L}_{k}})_{t>0}$ satisfying (i)-(iv). Denoting  $\mathcal{G}_{k}f(x)=\frac{1}{2}\big(f(x+e_{k})+f(x-e_{k})\big)$,   one has $2\mathcal{L}_{k}=I-\mathcal{G}_{k}$. Hence,
\begin{align}\label{250515.1} 
	e^{-t\mathcal{L}_{k}}f=e^{-{t}/{2}}\sum_{n=0}^{\infty}\frac{({t}/{2})^{n}}{n!}(\mathcal{G}_{k})^{n}f,\quad t>0.
\end{align}
Formula (\ref{250515.1}), together with the positivity and symmetry of $\mathcal{G}_{k}$, 
implies that $e^{-t\mathcal{L}_{k}}$ is positive and symmetric. On the other hand, we calculate  
\begin{align*} 
	\|e^{-t\mathcal{L}_{k}}f\|_{\infty}
	\leq &e^{-{t}/{2}}\sum_{n=0}^{\infty}\frac{( {t}/{2})^{n}}{n!}\|(\mathcal{G}_{k})^{n}f\|_{\infty}\\
	\leq&e^{-{t}/{2}}\sum_{n=0}^{\infty}\frac{({t}/{2})^{n}}{n!}\|f\|_{\infty} 
	=\|f\|_{\infty}.
\end{align*}
Similarly, for any $f\in  L_{1}( \mathcal{N})\cap\mathcal{N}_{+}$, we conclude that 
\begin{align*}
	\sum \otimes\tau (e^{-t\mathcal{L}_{k}}   f)= \|e^{-t\mathcal{L}_{k}}f\|_{L_{1}(\mathcal{N})}
	\leq  \|f\|_{L_{1}(\mathcal{N})}= \sum \otimes\tau(f).
\end{align*}
In summary,  $(e^{-t\mathcal{L}_{k}})_{t>0}$ is a semigroup  satisfying (i)-(iv). Since the operators $\mathcal{L}_{1},\cdots,\mathcal{L}_{d}$ commute pairwise, we  obtain
\begin{align*} 
	e^{-t(\mathcal{L}_{1}+\cdots+\mathcal{L}_{d})}=e^{-t\mathcal{L}_{1}}\circ\cdots \circ e^{-t\mathcal{L}_{d}}, \quad t\geq 0.
\end{align*}
Consequently,  the operator $\mathcal{L}=\mathcal{L}_{1}+\cdots+\mathcal{L}_{d}$ generates a semigroup $(e^{-t\mathcal{L}})_{t>0}$ satisfying  (i)-(iv). By (\ref{24127.1}), one has
\begin{align}\label{250424.2}
	\|\sup_{t>0} e^{-t\mathcal{L}}f\|_{L_{p}(\mathcal{N})}\leq C_{p}\|f\|_{L_{p}(\mathcal{N})},\quad 1<p<\infty.
\end{align}
Notice  that
\begin{align}\label{250313.1}
	\widehat{e^{-t\mathcal{L}}f}(\xi)=e^{-t\sum_{k=1}^{d}\sin^2(\pi \xi_{k})}\widehat{f}(\xi)=\widehat{P_{t}f}(\xi).
\end{align}
Therefore,  combining (\ref{250424.2}) with (\ref{250313.1}), we  deduce that 
\begin{align*}
	\|\sup_{t>0} P_{t}f\|_{L_{p}(\mathcal{N})} \leq C_{p}\|f\|_{L_{p}(\mathcal{N})},\quad 1<p<\infty. 
\end{align*}
$\hfill\square$

Lemma \ref{1209.1} below is a direct application of Proposition \ref{8.8.2}.
\begin{lemma}\label{1209.1}
	Let $d\geq 2$ and $\mu$ be the normalized spherical measure given in  \eqref{250413.1}. For $t>0$,   define
	\begin{align*}
		a_{t}(\xi) =\widehat{\mu}\big(t(\xi-\llbracket \xi \rrbracket)\big),\quad \xi\in \mathbb{R}^d.
	\end{align*}
	Then, for all $f\in L_2(\mathcal{N})$,  there exists  a constant  $C>0$ independent of the dimension $d$ such that
	\begin{align*}
		\|\sup_{t\in \mathbb{D}} \mathcal{F}^{-1}(a_{t}\widehat{f}\,)\|_{L_{2}(\mathcal{N})}\leq C \|f\|_{L_{2}(\mathcal{N})}.
	\end{align*}
\end{lemma}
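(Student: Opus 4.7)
The plan is to compare the multiplier $a_t$ with an appropriately time-rescaled semigroup multiplier $\mathfrak{p}_{s(t)}$ and reduce matters to Proposition \ref{8.8.2}. Since $a_t$ is $\mathbb{Z}^d$-periodic and coincides with $\widehat{\mu}(t\xi)$ on the fundamental domain $Q$, the operator-valued Plancherel identity \eqref{250609.1} allows us to work entirely on the Fourier side. Matching the leading-order expansions
\[
\widehat{\mu}(t\xi) = 1 - \frac{2\pi^2 t^2 |\xi|^2}{d} + O\Big(\frac{t^4|\xi|^4}{d^2}\Big), \qquad \mathfrak{p}_{s}(\xi) = 1 - s\sum_{k=1}^{d} \sin^2(\pi\xi_k) + O(s^2),
\]
together with $\sum_k \sin^2(\pi\xi_k)\sim \pi^2|\xi|^2$ as $\xi\to 0$, I would take $s(t) := 2t^2/d$ and decompose
\[
\mathcal{F}^{-1}(a_t\,\widehat{f}) = P_{s(t)}f \;+\; \mathcal{F}^{-1}\bigl((a_t-\mathfrak{p}_{s(t)})\widehat{f}\bigr).
\]

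Using the subadditivity of $\|\sup_{t}\,\cdot\,\|_{L_2(\mathcal{N})}$, which is immediate from Definition \ref{24926.1} since it defines a norm, this gives
\[
\|\sup_{t\in\mathbb{D}} \mathcal{F}^{-1}(a_t\widehat{f})\|_{L_2(\mathcal{N})} \leq \|\sup_{t\in\mathbb{D}} P_{s(t)}f\|_{L_2(\mathcal{N})} + \|\sup_{t\in\mathbb{D}} \mathcal{F}^{-1}\bigl((a_t-\mathfrak{p}_{s(t)})\widehat{f}\bigr)\|_{L_2(\mathcal{N})}.
\]
The first piece is dominated by $\|\sup_{r>0} P_r f\|_{L_2(\mathcal{N})}$, which Proposition \ref{8.8.2} bounds by $C\|f\|_{L_2(\mathcal{N})}$ with a dimension-free constant. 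For the second (error) piece, I would apply Proposition \ref{8.10.1} to majorize the noncommutative maximal function by the column--row square function. At $p=2$, the column and row norms both coincide with the standard $\ell_2$-valued $L_2$ norm, so combining this with the operator-valued Plancherel and Fubini reduces everything to the pointwise Fourier-side bound
\[
\sup_{\xi\in Q}\sum_{t\in\mathbb{D}} |a_t(\xi) - \mathfrak{p}_{s(t)}(\xi)|^2 \leq C,
\]
with $C$ independent of $d$.

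To establish the displayed pointwise summability I would split $\mathbb{D}$ according to whether $t|\xi|\leq 1$ or $t|\xi|>1$. In the small regime, the higher-order moment estimates $\int\langle\omega,\xi\rangle^{2k}\,d\mu(\omega) \lesssim |\xi|^{2k}/d^{k}$ on the sphere combine with the matched Taylor expansions to yield $|a_t(\xi)-\mathfrak{p}_{s(t)}(\xi)| \lesssim (t|\xi|)^4/d^2$, and the dyadic sum forms a geometric series whose largest term is at $t\sim 1/|\xi|$, contributing $\lesssim 1/d^4$. In the large regime, I would invoke the Bessel-function decay $|\widehat{\mu}(\eta)| \lesssim (1+|\eta|)^{-(d-1)/2}$ together with the exponential bound $\mathfrak{p}_{s(t)}(\xi) \leq e^{-8t^2|\xi|^2/d}$, which follows from $\sum_k \sin^2(\pi\xi_k)\geq 4|\xi|^2$ (cf.\ \eqref{250424.5}); each piece then yields a geometrically convergent dyadic sum since $d\geq 2$.

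The main obstacle is precisely the dimension-free pointwise control of the square-function sum, particularly in the intermediate scale $t|\xi|\sim\sqrt{d}$, where neither the Taylor expansion (which requires $t|\xi|\ll\sqrt{d}$) nor the Bessel decay (which is only useful once $t|\xi|$ is genuinely large) is sharp on its own; here one must estimate $|a_t(\xi)|$ and $\mathfrak{p}_{s(t)}(\xi)$ separately and exploit the rapid dimensional decay of $\widehat{\mu}(\eta)$ past $|\eta|\gtrsim 1$, which actually becomes advantageous for large $d$.
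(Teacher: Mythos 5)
Your overall strategy is the same as the paper's: split $a_t = \mathfrak{p}_{ct^2/d} + (a_t - \mathfrak{p}_{ct^2/d})$, control the semigroup part by Proposition~\ref{8.8.2}, and reduce the error part via Proposition~\ref{8.10.1} and the Plancherel identity~\eqref{250609.1} to a pointwise Fourier-side bound $\sup_{\xi\in Q}\sum_{t\in\mathbb{D}}|a_t(\xi)-\mathfrak{p}_{s(t)}(\xi)|^2\leq C$. The correct identification of the remaining difficulty (the regime $t\|\xi\|\sim\sqrt{d}$) is also to your credit. However, the sketch of the pointwise estimate has genuine gaps.

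First, the claimed matched-Taylor bound $|a_t-\mathfrak{p}_{s(t)}|\lesssim (t|\xi|)^4/d^2$ is not correct. With $s(t)=2t^2/d$ the linear-in-$s$ pieces are $s\sum_k\sin^2(\pi\xi_k)$ versus $2\pi^2 t^2|\xi|^2/d$; since $\pi^2|\xi|^2-\sum_k\sin^2(\pi\xi_k)$ is of order $\sum_k\xi_k^4$, which can be as large as $\sim|\xi|^4$ (e.g.\ when $\xi$ is concentrated in one coordinate), the discrepancy is $\sim t^2|\xi|^4/d$ rather than $(t|\xi|)^4/d^2$. The paper does not attempt any cancellation between $a_t$ and the semigroup; it simply writes $|a_t-\mathfrak{p}_{t^2/d}|\leq|a_t-1|+|1-\mathfrak{p}_{t^2/d}|$ and bounds each by $\lesssim t^2\|\xi\|^2/d$, using the cited dimension-free estimate $|\widehat{\mu}(\xi)-1|\leq 2\pi^2(|\xi|/\sqrt{d})^2$ and $1-e^{-x}\leq x$ together with $\sin^2(\pi\xi_k)\leq\pi^2\xi_k^2$.

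Second, and more seriously, the Bessel decay you invoke, $|\widehat{\mu}(\eta)|\lesssim (1+|\eta|)^{-(d-1)/2}$, has an implied constant that depends on $d$ in an uncontrolled way, so it cannot yield a dimension-free bound as stated. The key missing ingredient is the dimension-free decay $|\widehat{\mu}(\xi)|\lesssim (|\xi|/\sqrt{d})^{-1/2}$ from \cite[Lemma 4.2]{mirek2023dimension} (uniform in $d$ and $\xi$). Together with the quadratic estimate above, this gives $|a_t(\xi)-\mathfrak{p}_{t^2/d}(\xi)|\lesssim\min\{t^2d^{-1}\|\xi\|^2,\ t^{-1/2}d^{1/4}\|\xi\|^{-1/2}\}$, which transitions exactly at $t\|\xi\|=\sqrt{d}$, and the dyadic sum of the squares is then a clean geometric series bounded by an absolute constant --- eliminating the ``intermediate regime'' obstacle you flag but do not resolve. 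So the missing piece of your argument is precisely the dimension-free replacement for the classical stationary-phase decay of $\widehat{\mu}$, and once it is supplied the rest of your plan goes through essentially as in the paper.
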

\begin{proof}
	By the triangle inequality, we have 
\begin{align}\label{250424.3}
		\|\sup_{t\in \mathbb{D}}\mathcal{F}^{-1}(a_{t}\widehat{f}\,)&\|_{L_{2}(\mathcal{N})}\nonumber\\
		\leq&\|\sup_{t\in\mathbb{D}}\mathcal{F}^{-1}(\mathfrak{p}_{t^2d^{-1}}  \widehat{f}\,)\|_{L_{2}(\mathcal{N})}+\|\sup_{t\in \mathbb{D}}\mathcal{F}^{-1}\big((a_{t}-\mathfrak{p}_{t^2d^{-1}})\widehat{f}\,\big)\|_{L_{2}(\mathcal{N})}.
\end{align}
Thanks to Proposition \ref{8.8.2}, there exists a constant $C>0$ independent of the dimension $d$ such that 
\begin{align*}
	\|\sup_{t\in\mathbb{D}}\mathcal{F}^{-1}(\mathfrak{p}_{t^2d^{-1}}  \widehat{f}\,)\|_{L_{2}(\mathcal{N})}=\|\sup_{t\in\mathbb{D}}P_{t^2d^{-1}}f \|_{L_{2}(\mathcal{N})}\leq C\|f\|_{L_{2}(\mathcal{N})}.
\end{align*}

	Now, we apply the noncommutative square functions to estimate the  second term on the right-hand side of (\ref{250424.3}). More precisely, based on  Proposition $\ref{8.10.1}$ and the Plancherel formula  (\ref{250609.1}), we see that
	\begin{align}\label{250508.1} 
		\|\sup_{t\in \mathbb{D}}\mathcal{F}^{-1}\big((a_{t}-\mathfrak{p}_{t^2d^{-1}})\widehat{f}\,\big)\|^2_{L_{2}(\mathcal{N})} 
		\lesssim& \bigg\|\bigg(\sum_{t\in \mathbb{D}}\big|\mathcal{F}^{-1}\big((a_{t}-\mathfrak{p}_{t^2d^{-1}})\widehat{f}\,\big)\big|^2\bigg)^{{1}/{2}}\bigg\|^2_{L_{2}(\mathcal{N})}\nonumber\\
		=&\sum_{t\in \mathbb{D}}\|\mathcal{F}^{-1}\big((a_{t}-\mathfrak{p}_{t^2d^{-1}})\widehat{f}\,\big)\|^2_{L_{2}(\mathcal{N})}\nonumber\\
		=&\sum_{t\in \mathbb{D}}\|(a_{t}-\mathfrak{p}_{t^2d^{-1}})\widehat{f}\,\|^2_{L_{2}(  L_{\infty}(\mathbb{T}^d)\overline{\otimes}\mathcal{M})}.
	\end{align}
	Note that	$a_{t}$ is $1$-periodic in each coordinate $\xi_{j}$ for $j\in \mathbb{N}_{d}$, so it is well defined on $\mathbb{T}^d$. For $\xi\in Q$, we have $\xi-\llbracket \xi \rrbracket=\xi$ and $|\xi-\llbracket \xi \rrbracket|=\|\xi\|$.  Recalling  \eqref{250424.5} and   the estimates of   $\widehat{\mu}$ shown in \cite[Lemma 4.2]{mirek2023dimension}:
	\begin{align*}
		&|\widehat{\mu}(\xi)-1|\leq 2\pi^2\big({|\xi|}/{\sqrt{d}}\big)^2,&\xi\in\mathbb{R}^d,\\
		&|\widehat{\mu}(\xi)|\lesssim \big({|\xi|}/{\sqrt{d}}\big)^{-{1}/{2}},&\xi\in\mathbb{R}^d,
	\end{align*}
	with the implicit constant independent of  $d$ and $\xi$, one has
	\begin{align}\label{1202.2}
		|a_{t}(\xi)-\mathfrak{p}_{t^2d^{-1}}(\xi)|\lesssim \min\left\{t^2d^{-1}\|\xi\|^2, t^{ -{1}/{2}}d^{ {1}/{4}}\|\xi\|^{-{1}/{2}} \right\},\quad\xi\in\mathbb{T}^d.
	\end{align}
Consequently, taking $t=2^{m}$ with $m\in\mathbb{N}$, and using $(\ref{1202.2})$, we conclude that 
\begin{align}\label{250515.5}
	\mbox{RHS}(\ref{250508.1})\lesssim& \tau\int_{\mathbb{T}^{d}}\sum_{m\in\mathbb{N}} \min\left\{\frac{2^{4m}\|\xi\|^4}{d^2},\,\,\frac{d^{{1}/{2}}}{2^{m} \|\xi\|}
	 \right\} |\widehat{f}(\xi)|^2d\xi\nonumber\\
	 \leq& \tau\int_{\mathbb{T}^{d}}\sum_{m\in\mathbb{N}\atop  m\geq \log_{2}{\sqrt{d}}/{ \|\xi\|}}\frac{d^{{1}/{2}}}{2^{m} \|\xi\|}\,|\widehat{f}(\xi)|^2d\xi
	+\tau\int_{\mathbb{T}^{d}}\sum_{m\in\mathbb{N}\atop  m<\log_{2}{\sqrt{d}}/{ \|\xi\|}}\frac{2^{4m}\|\xi\|^4}{d^2}\,|\widehat{f}(\xi)|^2d\xi\nonumber\\
	 \leq &2 \|f\|^2_{L_{2}(\mathcal{N})},
\end{align}
where the implicit constant is independent of the dimension $d$. This proof is completed.
\end{proof}

\begin{lemma}\label{1209.2}
	Let $d\geq 2$ and $f\in L_{2}(L_{\infty}(\mathbb{R}^d)\overline{\otimes}\mathcal{M})$, we have 
	\begin{align*}
		\|\sup_{t\in \mathbb{D}} \mathcal{F}^{-1}\big(\widehat{\mu}(t\cdot)\widehat{f}\,\big)\|_{L_{2}( L_{\infty}(\mathbb{R}^d)\overline{\otimes}\mathcal{M})}\lesssim \|f\|_{L_{2}( L_{\infty}(\mathbb{R}^d)\overline{\otimes}\mathcal{M})},
	\end{align*}
	where the implicit constant is independent of the  dimension $d$.
\end{lemma}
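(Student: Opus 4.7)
The plan is to mirror the proof of Lemma \ref{1209.1}, replacing the discrete Poisson-type semigroup $P_{t^{2}d^{-1}}$ on $\mathbb{Z}^{d}$ by the Gaussian heat semigroup on $\mathbb{R}^{d}$, namely $H_{s}f=\mathcal{F}^{-1}\big(e^{-s|\xi|^{2}}\widehat{f}\,\big)$ for $s>0$. By the triangle inequality one splits
\begin{align*}
\|\sup_{t\in\mathbb{D}}\mathcal{F}^{-1}(\widehat{\mu}(t\cdot)\widehat{f}\,)\|_{L_{2}} \leq \|\sup_{t\in\mathbb{D}} H_{t^{2}/d}f\|_{L_{2}} + \|\sup_{t\in\mathbb{D}}\mathcal{F}^{-1}\big((\widehat{\mu}(t\cdot)-e^{-t^{2}|\cdot|^{2}/d})\widehat{f}\,\big)\|_{L_{2}},
\end{align*}
with all norms taken in $L_{2}(L_{\infty}(\mathbb{R}^{d})\overline{\otimes}\mathcal{M})$. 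The choice of scale $t^{2}/d$ is forced by the bound $|\widehat{\mu}(t\xi)-1|\lesssim t^{2}|\xi|^{2}/d$ from \cite[Lemma 4.2]{mirek2023dimension}, and it is exactly this $1/d$ normalization that keeps the argument dimension-free.

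For the first term I would extend $H_{s}$ to $L_{\infty}(\mathbb{R}^{d})\overline{\otimes}\mathcal{M}$ by tensoring with the identity on $\mathcal{M}$. Since $H_{s}$ is convolution with a centred Gaussian probability density, the extended semigroup is a positive contraction, preserves the trace $\int\otimes\tau$, and is self-adjoint with respect to this trace, so the hypotheses (i)--(iv) of Lemma \ref{250806.1} are satisfied. The lemma then delivers
\begin{align*}
\|\sup_{s>0} H_{s}f\|_{L_{2}(L_{\infty}(\mathbb{R}^{d})\overline{\otimes}\mathcal{M})}\lesssim \|f\|_{L_{2}(L_{\infty}(\mathbb{R}^{d})\overline{\otimes}\mathcal{M})}
\end{align*}
with an absolute constant (the statement of Lemma \ref{250806.1} is abstract and its constant depends only on $p$), and restricting the supremum to the sub-sequence $s=t^{2}/d$ with $t\in\mathbb{D}$ only decreases the left-hand side.

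For the second term I would apply Proposition \ref{8.10.1} to dominate the noncommutative $\ell_{\infty}$-norm by the square function and then use the $\mathbb{R}^{d}$-analogue of the operator-valued Plancherel identity (Fourier transform is an $L_{2}$-isometry on $L_{\infty}(\mathbb{R}^{d})\overline{\otimes}\mathcal{M}$) to reach
\begin{align*}
\|\sup_{t\in\mathbb{D}}\mathcal{F}^{-1}\big((\widehat{\mu}(t\cdot)-e^{-t^{2}|\cdot|^{2}/d})\widehat{f}\,\big)\|_{L_{2}}^{2} \lesssim \sum_{t\in\mathbb{D}}\|(\widehat{\mu}(t\cdot)-e^{-t^{2}|\cdot|^{2}/d})\widehat{f}\,\|_{L_{2}}^{2}.
\end{align*}
Combining the two bounds on $\widehat{\mu}$ from \cite[Lemma 4.2]{mirek2023dimension} with the elementary estimates $|1-e^{-s}|\leq s$ and $e^{-s}\leq 1$ yields the pointwise comparison
\begin{align*}
|\widehat{\mu}(t\xi)-e^{-t^{2}|\xi|^{2}/d}| \lesssim \min\{t^{2}|\xi|^{2}/d,\,(t|\xi|/\sqrt{d})^{-1/2}\},
\end{align*}
with implicit constant independent of $d$ and $\xi$. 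Writing $t=2^{m}$ and splitting the resulting geometric sum at the crossover $2^{m}\sim \sqrt{d}/|\xi|$ exactly as in \eqref{250515.5} shows that the sum is bounded by an absolute constant, which via Plancherel closes the argument.

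The only delicate point is calibrating the Gaussian comparison so that both the near-origin expansion and the decay at infinity are matched dimension-freely; this is already built into the estimates of \cite[Lemma 4.2]{mirek2023dimension}, so once the correct time parameter $t^{2}/d$ is chosen the proof reduces to an almost verbatim adaptation of Lemma \ref{1209.1} from the periodic to the Euclidean setting.
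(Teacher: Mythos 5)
Your proposal is correct and matches exactly what the paper intends: the authors state only that the proof is ``similar to the proof of Lemma~\ref{1209.1}, the only difference being the use of heat semigroup on $\mathbb{R}^d$ in place of $P_t$,'' and you have filled in precisely those omitted details (the scale $s=t^2/d$, verification of hypotheses (i)--(iv) of Lemma~\ref{250806.1} for the Gaussian heat semigroup, and the square-function/Plancherel argument with the two bounds from \cite[Lemma 4.2]{mirek2023dimension}). Nothing to add.
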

\begin{proof}
	This proof is similar to the proof of Lemma \ref{1209.1}, the only difference being the use of heat semigroup on $\mathbb{R}^d$ in place of $P_{t}$; we omit the details here.
\end{proof}

\subsection{Noncommutative sampling principle}\label{250413.2}
	Let $\Lambda$ be an index set. For every $\lambda\in \Lambda$, $T_{\lambda}$ is a convolution operator with a suitable kernel $K_{\lambda}$, i.e.,
	\begin{align*}
		T_{\lambda}f(x)=\int_{\mathbb{R}^d}K_{\lambda}(y)f(x-y)dy,\quad x\in \mathbb{R}^d.
	\end{align*}
	 Additionally,  we assume that 
	 \begin{align}\label{250527.3}
	 	 m_{\lambda}(\xi)=\widehat{K_{\lambda}}(\xi)\in L_{\infty}(\mathbb{R}^d),\quad \mbox{and}\quad \mbox{supp}\, m_{\lambda}\subset 	Q.
	 \end{align}
	  Define $(T_{\lambda})_{dis}$ as the convolution operator  on $\mathbb{Z}^d$ with  kernel   $(K_{\lambda})_{dis}=K_{\lambda}|_{\mathbb{Z}^d}$, more precisely,
\begin{align*} 
 (T_{\lambda})_{dis}f(n)=\sum_{m\in\mathbb{Z}^d}K_{\lambda}(m)f(n-m),\quad  n\in\mathbb{Z}^d.
\end{align*}
\begin{lemma}[\cite{chen-hong}]\label{250428.3}
	Let   $(T_{\lambda})_{\lambda\in\Lambda}$ be a family of convolution operators satisfying assumption \eqref{250527.3}.	Suppose that for some $1<p<\infty$,  there exists a  constant $C>0$ independent of the dimension $d$ such that  
	\begin{align*}
			\|\sup_{\lambda\in\Lambda} T_{\lambda} g\|_{L_{p}( L_{\infty}(\mathbb{R}^d)\overline{\otimes}\mathcal{M})}\leq C\|g\|_{L_{p}( L_{\infty}(\mathbb{R}^d)\overline{\otimes}\mathcal{M})},\quad g\in L_{p}( L_{\infty}(\mathbb{R}^d)\overline{\otimes}\mathcal{M}).		
	\end{align*}
	Then, for every $f\in L_{p}(\mathcal{N})$, we have
	\begin{align*}
		\|\sup_{\lambda\in\Lambda}\,(T_{\lambda})_{dis}f\|_{L_{p}(\mathcal{N})}\lesssim 3^d\|f\|_{L_{p}(\mathcal{N})},
	\end{align*}
	where 	  the implicit constant is independent of  the dimension $d$.
\end{lemma}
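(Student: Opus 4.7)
The proof is a semi-commutative adaptation of the Magyar--Stein--Wainger sampling principle. The core idea is to lift $f\in L_p(\mathcal{N})$ to a band-limited continuous function $F:\mathbb{R}^d\to L_p(\mathcal{M})$ so that $T_\lambda F$ sampled on $\mathbb{Z}^d$ reproduces $(T_{\lambda})_{dis}f$, and then to invoke the continuous maximal hypothesis on $F$, paying a dimensional price for passing back and forth between $L_p$ on $\mathbb{R}^d$ and on $\mathbb{Z}^d$.

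Fix a product-type smooth cutoff $\chi\in C_c^\infty(\mathbb{R}^d)$ with $\chi\equiv 1$ on $Q$ and $\operatorname{supp}\chi\subset 3Q=[-3/2,3/2)^d$; the enlargement factor $3$ is what drives the $3^d$ loss. Define the lift $F$ via $\widehat F(\xi)=\chi(\xi)\widehat{f}_{\mathrm{per}}(\xi)$, where $\widehat{f}_{\mathrm{per}}$ is the $\mathbb{Z}^d$-periodic extension of the discrete Fourier transform $\widehat f$ of $f$. Since $\operatorname{supp}m_\lambda\subset Q$ and $\chi\equiv 1$ on $Q$, one has $\widehat{T_\lambda F}=m_\lambda\chi\widehat{f}_{\mathrm{per}}=m_\lambda\widehat{f}_{\mathrm{per}}$, a function supported in $Q$. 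Fourier inversion on $\mathbb{R}^d$ therefore collapses to Fourier inversion on $\mathbb{T}^d$, and for every $n\in\mathbb{Z}^d$,
\begin{align*}
(T_\lambda F)(n)=\int_{\mathbb{R}^d}m_\lambda(\xi)\widehat{f}_{\mathrm{per}}(\xi)e^{2\pi \mathrm{i}\langle n,\xi\rangle}d\xi=\int_Q m_\lambda(\xi)\widehat f(\xi)e^{2\pi \mathrm{i}\langle n,\xi\rangle}d\xi=(T_{\lambda})_{dis}f(n).
\end{align*}

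Two norm comparisons are then required. First, viewing $F=f*\Phi$ with the Schwartz function $\Phi=\mathcal{F}^{-1}\chi\in\mathcal{S}(\mathbb{R}^d)$, an operator-valued Young-type inequality yields
\begin{align*}
\|F\|_{L_p(L_\infty(\mathbb{R}^d)\overline{\otimes}\mathcal{M})}\lesssim\|\Phi\|_{L_1(\mathbb{R}^d)}\|f\|_{L_p(\mathcal{N})}\lesssim\|f\|_{L_p(\mathcal{N})}.
\end{align*}
Second, one establishes a noncommutative Plancherel--Pólya-type restriction: for any sequence $(G_\lambda)_\lambda$ of operator-valued functions with Fourier supports in $3Q$,
\begin{align*}
\|(G_\lambda|_{\mathbb{Z}^d})_\lambda\|_{L_p(\mathcal{N};\ell_\infty(\Lambda))}\lesssim 3^d\|(G_\lambda)_\lambda\|_{L_p(L_\infty(\mathbb{R}^d)\overline{\otimes}\mathcal{M};\ell_\infty(\Lambda))}.
\end{align*}
Applying this to $G_\lambda=T_\lambda F$ (which inherits Fourier support in $3Q$) and chaining with the continuous hypothesis applied to $F$ delivers the conclusion.

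The main obstacle is the noncommutative restriction-to-lattice inequality above. In the commutative case it follows from a pointwise mean-value bound for band-limited functions together with a covering argument, in which the factor $3^d$ encodes the volume of the Fourier support $3Q$. In the semi-commutative framework the $\ell_\infty$-valued norm is defined through factorizations $x_\lambda=ay_\lambda b$ with $a,b\in L_{2p}(\mathcal{M})$, so pointwise reasoning is unavailable. The required strategy is to take such a factorization of the continuous sequence and construct a matching factorization of the restricted discrete sequence whose $L_{2p}(\mathcal{N})$-witnesses are controlled (with a total $O(3^d)$ loss) by the continuous witnesses; this can be implemented by sampling the factors $a$ and $b$ through a partition of unity adapted to the band limitation, while preserving operator positivity.
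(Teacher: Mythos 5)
Your high-level outline is the right one: lift $f$ to a band-limited $F$ on $\mathbb{R}^d$ via $\widehat F=\chi\widehat{f}_{\mathrm{per}}$, observe the sampling identity $(T_\lambda)_{dis}f(n)=T_\lambda F(n)$, bound $\|F\|_{L_p(L_\infty(\mathbb{R}^d)\overline{\otimes}\mathcal{M})}$ by $C^d\|f\|_{L_p(\mathcal{N})}$, apply the continuous hypothesis, and then restrict back to the lattice. The first three steps are fine. The gap is exactly where you flagged it, but the mechanism you propose for the restriction step does not work. In the factorization $T_\lambda F(y)=a(y)\,y_\lambda(y)\,b(y)$ defining the $L_p(L_\infty(\mathbb{R}^d)\overline{\otimes}\mathcal{M};\ell_\infty(\Lambda))$ norm, the witnesses $a,b$ are merely elements of $L_{2p}(L_\infty(\mathbb{R}^d)\overline{\otimes}\mathcal{M})$; they carry no band-limitation (only the product $a y_\lambda b$ does), so $a|_{\mathbb{Z}^d}$ and $b|_{\mathbb{Z}^d}$ are not even well-defined as $L_{2p}$ objects, and "sampling them through a partition of unity" would destroy the pointwise identity $T_\lambda F(n)=a(n)y_\lambda(n)b(n)$. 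This is the step that needs an actual argument, and the factorization route is a dead end.

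The workable route — and, as far as one can reconstruct, the one Chen--Hong take — bypasses the factorization description in favor of the order-theoretic one. Since $\widehat{T_\lambda F}$ is supported in $Q$, one has the reproducing formula $T_\lambda F=T_\lambda F*\Phi$ for any Schwartz $\Phi$ with $\widehat\Phi\equiv1$ on $Q$ and $\operatorname{supp}\widehat\Phi\subset 3Q$. After reducing to self-adjoint sequences (e.g. splitting into real and imaginary parts, which is harmless for $L_p(\ell_\infty)$), take a near-optimal majorant $A\in L_p(L_\infty(\mathbb{R}^d)\overline{\otimes}\mathcal{M})_+$ with $-A\le T_\lambda F\le A$ as in \eqref{250427.2}; then $-|\Phi(n-y)|A(y)\le \Phi(n-y)T_\lambda F(y)\le|\Phi(n-y)|A(y)$ for a.e.\ $y$, and integrating in $y$ yields a majorant $B(n)=\int A(y)|\Phi(n-y)|\,dy$ for the sampled sequence. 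A Minkowski/Jensen computation then gives $\|B\|_{L_p(\mathcal{N})}\lesssim\big(\|\Phi\|_{L_1}^{p-1}\sup_y\sum_n|\Phi(n-y)|\big)^{1/p}\|A\|_{L_p}$, and for a product cutoff $\chi$ this constant is $C^d$, which combined with the lift step yields the stated $3^d$. Also note a small inconsistency in your write-up: you first (correctly) observe $\operatorname{supp}\widehat{T_\lambda F}\subset Q$ (since $\operatorname{supp}m_\lambda\subset Q$), then later assert $T_\lambda F$ "inherits Fourier support in $3Q$" to justify the $3^d$ loss; in fact the exponential factor does not arise from widening the band of $T_\lambda F$ but from the kernel $\Phi$ (whose $\widehat\Phi$ lives on $3Q$) and the lift of $f$ to $F$.
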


Fix $q \in\mathbb{N}$ and make the stronger assumption:
\begin{align}\label{250604.1}
	  \mbox{Im}(K_{\lambda})=0, \quad  \mbox{and}\quad \mbox{supp}\, m_{\lambda}\subset 	q^{-1}Q.
\end{align}
Here, $\mbox{Im}(K_{\lambda})$ represents the imaginary part of $K_{\lambda}$. Assumption (\ref{250604.1}) implies that $K_{\lambda}$ is a real-valued function. Define
\begin{align}\label{250603.2}
	(m_{\lambda})_{per}^{q}(\xi)=\sum_{x\in\mathbb{Z}^d}m_{\lambda}(\xi-{x}/{q}),\quad\xi\in \mathbb{R}^d,
\end{align}
which is ${1}/{q}$ periodic in each coordinate. We consider the associated multiplier operator  $(T_{\lambda})_{dis}^{q}$,   given by 
\begin{align}\label{250516.1}
	\widehat{(T_{\lambda})_{dis}^{q}f}(\xi)=(m_{\lambda})_{per}^{q}(\xi)\widehat{f}(\xi).
\end{align}

\begin{proposition}\label{1206.1}
			Let   $(T_{\lambda})_{\lambda\in\Lambda}$ be a family of convolution operators satisfying assumption  \eqref{250604.1}.  Suppose that for some $1<p<\infty$,  there exists a  constant $C>0$ independent of the dimension $d$ such that 
	\begin{align}\label{250427.1}
		\|\sup_{\lambda\in\Lambda}T_{\lambda}g\|_{L_{p}( L_{\infty}(\mathbb{R}^d)\overline{\otimes}\mathcal{M})}\leq C\|g\|_{L_{p}( L_{\infty}(\mathbb{R}^d)\overline{\otimes}\mathcal{M})}, \quad g\in L_{p}( L_{\infty}(\mathbb{R}^d)\overline{\otimes}\mathcal{M}).
	\end{align}
	Then, for every $f\in L_{p}(\mathcal{{N}})$, we have
	\begin{align*}
		\|\sup_{\lambda\in\Lambda}\,(T_{\lambda})^{q}_{dis}f\|_{L_{p}(\mathcal{{N}})}\lesssim 3^d\|f\|_{L_{p}(\mathcal{{N}})},
	\end{align*}
	where the implicit constant is independent of  the dimension $d$.
\end{proposition}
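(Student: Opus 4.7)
The strategy is to reduce Proposition \ref{1206.1} to the already-established sampling principle of Lemma \ref{250428.3} through a rescaling of the continuous operator by a factor of $q$, combined with the decomposition of $\mathbb{Z}^d$ into cosets modulo $q\mathbb{Z}^d$.

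First I would determine the kernel of $(T_\lambda)^q_{dis}$ explicitly. Because $\mathrm{supp}\,m_\lambda \subset q^{-1}Q$, the translates appearing in \eqref{250603.2} are pairwise disjoint; tiling $\mathbb{T}^d$ by the $q^d$ copies of $q^{-1}Q$ and inverting the Fourier series implicit in \eqref{250516.1}, the orthogonality $\sum_{x \in \{0, \ldots, q-1\}^d} e^{2\pi \mathrm{i}\langle n, x/q\rangle} = q^d \chi_{q\mathbb{Z}^d}(n)$ produces
\begin{align*}
	(T_\lambda)^q_{dis} f(n) = q^d \sum_{k \in \mathbb{Z}^d} K_\lambda(qk) f(n - qk),\quad n \in \mathbb{Z}^d,
\end{align*}
so that $(T_\lambda)^q_{dis}$ is a convolution supported on the sublattice $q\mathbb{Z}^d$. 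I would then introduce the rescaled continuous kernel $\widetilde{K}_\lambda(x) = q^d K_\lambda(qx)$, whose Fourier transform $\widetilde{m}_\lambda(\xi) = m_\lambda(\xi/q)$ is now supported in $Q$. A direct change of variables shows $\widetilde{T}_\lambda g(x) = (T_\lambda g(\cdot/q))(qx)$, so combining assumption \eqref{250427.1} with the dilation identity in Lemma \ref{1203.4} yields
\begin{align*}
	\|\sup_\lambda \widetilde{T}_\lambda g\|_{L_p(L_\infty(\mathbb{R}^d)\overline{\otimes}\mathcal{M})} \lesssim \|g\|_{L_p(L_\infty(\mathbb{R}^d)\overline{\otimes}\mathcal{M})}
\end{align*}
with a constant independent of both $d$ and $q$. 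Lemma \ref{250428.3} therefore applies to $(\widetilde{T}_\lambda)_{\lambda \in \Lambda}$ and gives $\|\sup_\lambda (\widetilde{T}_\lambda)_{dis} g\|_{L_p(\mathcal{N})} \lesssim 3^d \|g\|_{L_p(\mathcal{N})}$.

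The coset decomposition $\mathbb{Z}^d = \bigsqcup_{r \in \{0, \ldots, q-1\}^d}(r + q\mathbb{Z}^d)$ together with the subsampled functions $\widetilde{f}_r(k) = f(qk+r)$ then reveals the key identity
\begin{align*}
	(T_\lambda)^q_{dis} f(qk+r) = (\widetilde{T}_\lambda)_{dis} \widetilde{f}_r(k), \qquad k \in \mathbb{Z}^d,
\end{align*}
as follows at once from the convolution formula of the first step. It remains to glue the $q^d$ maximal estimates $\|\sup_\lambda (\widetilde{T}_\lambda)_{dis} \widetilde{f}_r\|_{L_p(\mathcal{N})} \lesssim 3^d \|\widetilde{f}_r\|_{L_p(\mathcal{N})}$ into a single bound on $\|\sup_\lambda (T_\lambda)^q_{dis} f\|_{L_p(\mathcal{N})}$. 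For each $r$ I would pick a near-optimal factorization $(\widetilde{T}_\lambda)_{dis} \widetilde{f}_r = a_r y_{\lambda,r} b_r$ and exploit the scalar rescaling $(a_r, y_{\lambda,r}, b_r) \mapsto (t_r a_r, t_r^{-2} y_{\lambda,r}, t_r b_r)$, which leaves the product $\|a_r\|_{L_{2p}}\sup_\lambda\|y_{\lambda,r}\|_\infty\|b_r\|_{L_{2p}}$ invariant, to normalize $\sup_\lambda\|y_{\lambda,r}\|_\infty = 1$ and $\|a_r\|_{L_{2p}} = \|b_r\|_{L_{2p}}$ simultaneously for every $r$. Translating these factorizations back to their respective cosets and adding yields a global factorization $(T_\lambda)^q_{dis} f = a y_\lambda b$ with disjointly supported factors; disjointness then gives $\|a\|_{L_{2p}(\mathcal{N})}^{2p} = \sum_r \|a_r\|_{L_{2p}(\mathcal{N})}^{2p}$ (and analogously for $b$), while $\sup_\lambda \|y_\lambda\|_{\infty} = \sup_r \sup_\lambda \|y_{\lambda,r}\|_\infty = 1$. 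Consequently
\begin{align*}
	\|\sup_\lambda (T_\lambda)^q_{dis} f\|_{L_p(\mathcal{N})} \leq \Big(\sum_r \|\sup_\lambda (\widetilde{T}_\lambda)_{dis} \widetilde{f}_r\|_{L_p(\mathcal{N})}^p\Big)^{1/p} \lesssim 3^d \|f\|_{L_p(\mathcal{N})},
\end{align*}
where the last step uses the identity $\sum_r \|\widetilde{f}_r\|_{L_p(\mathcal{N})}^p = \|f\|_{L_p(\mathcal{N})}^p$.

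The main obstacle is this final gluing step. In the commutative setting disjoint supports combine trivially in $\ell_p$ fashion, but the three-factor definition of $L_p(\mathcal{N};\,\ell_\infty)$ forces us to control a shared $\sup_\lambda \|y_\lambda\|_\infty$ across all cosets. Without the scalar normalization a naive triangle inequality would only produce $\sum_r\|\widetilde{f}_r\|_{L_p(\mathcal{N})}$ and cost an additional factor $q^{d(p-1)/p}$; the rescaling trick is precisely what upgrades the estimate to the desired $\ell_p$-type combination and keeps the final constant $\lesssim 3^d$ independent of $q$.
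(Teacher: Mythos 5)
Your proposal is correct, and its overall scaffolding (rescale the kernel by $q$, invoke Lemma \ref{250428.3}, decompose $\mathbb{Z}^d$ into the $q^d$ cosets modulo $q\mathbb{Z}^d$, observe the per-coset identity $(T_\lambda)^q_{dis}f(qk+r) = (\widetilde{T}_\lambda)_{dis}\widetilde{f}_r(k)$) coincides with the paper's. Where you genuinely diverge is in the gluing step. The paper reduces to $f\geq 0$, notes that $T_\lambda^\# f_{n_1}$ is self-adjoint \emph{because} assumption \eqref{250604.1} forces $K_\lambda$ to be real-valued, and then applies the two-sided bound characterization \eqref{250427.2} to each coset to produce dominating positive operators $G_{n_1}$, which are then assembled into a single dominant $G$ and summed in $\ell_p$ over cosets. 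You instead work straight from Definition \ref{24926.1}: take near-optimal three-term factorizations per coset, normalize them so that the middle $\sup_\lambda$-factor is $1$ and $\|a_r\|_{L_{2p}}=\|b_r\|_{L_{2p}}$, and patch the factors disjointly over cosets, whereupon the $L_{2p}$ norms add in $\ell_{2p}$ and the desired $\ell_p$-combination of the coset bounds falls out. This is a clean, equivalent route; it even sidesteps the need for $K_\lambda$ to be real-valued and the WLOG-positivity reduction, so in that sense it yields a marginally more general statement. One small imprecision: the single rescaling $(a_r, y_{\lambda,r}, b_r)\mapsto(t_r a_r, t_r^{-2}y_{\lambda,r}, t_r b_r)$ gives only one free parameter, whereas you need two to enforce both $\sup_\lambda\|y_{\lambda,r}\|_\infty=1$ and $\|a_r\|_{L_{2p}}=\|b_r\|_{L_{2p}}$; you should also apply the product-invariant balancing $(s_r a_r, y_{\lambda,r}, s_r^{-1}b_r)$, but this is cosmetic and does not affect the validity of the argument.
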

\begin{proof}
Let $T_{\lambda}^{q}$ be  the convolution operator  with  kernel $(K_{\lambda})_{{1}/{q}}(\cdot)=q^{d}K_{\lambda}(q\,\cdot)$.  A scaling argument combined with  Lemma \ref{1203.4}  shows that, for all $s>0$,
	\begin{align}\label{1206.2}
		\|(T^{s}_{\lambda})_{\lambda\in\Lambda}\|_{L_{p}( L_{\infty}(\mathbb{R}^d)\overline{\otimes}\mathcal{M})\rightarrow L_{p}( L_{\infty}(\mathbb{R}^d)\overline{\otimes}\mathcal{M};\,\ell_{\infty})}=\|(T_{\lambda})_{\lambda\in\Lambda}\|_{L_{p}(L_{\infty}(\mathbb{R}^d)\overline{\otimes}\mathcal{M})\rightarrow L_{p}( L_{\infty}(\mathbb{R}^d)\overline{\otimes}\mathcal{M};\,\ell_{\infty})}.
	\end{align}
	Indeed,	 
	\begin{align*}
			\mbox{LHS}(\ref{1206.2})=&\sup_{g\neq 0}\frac{\|\sup_{\lambda\in\Lambda}T^{s}_{\lambda}g\|_{L_{p}( L_{\infty}(\mathbb{R}^d)\overline{\otimes}\mathcal{M})}}{\|g\|_{L_{p}( L_{\infty}(\mathbb{R}^d)\overline{\otimes}\mathcal{M})}}\nonumber\\
			=&\sup_{g\neq 0}\frac{\|\sup_{\lambda\in\Lambda}T_{\lambda}g_{s}(s\cdot)\|_{L_{p}( L_{\infty}(\mathbb{R}^d)\overline{\otimes}\mathcal{M})}\,s^{d}}{\|g\|_{L_{p}( L_{\infty}(\mathbb{R}^d)\overline{\otimes}\mathcal{M})}}\nonumber \\=&\sup_{g\neq 0}\frac{\|\sup_{\lambda\in\Lambda}T_{\lambda}g_{s}\|_{L_{p}( L_{\infty}(\mathbb{R}^d)\overline{\otimes}\mathcal{M})}\,s^{-\frac{d}{p}}s^{d}}{\|g\|_{L_{p}( L_{\infty}(\mathbb{R}^d)\overline{\otimes}\mathcal{M})}}\nonumber\\
		=&\sup_{g\neq 0}\frac{\|\sup_{\lambda\in\Lambda}T_{\lambda}g_{s}\|_{L_{p}( L_{\infty}(\mathbb{R}^d)\overline{\otimes}\mathcal{M})}\,s^{-\frac{d}{p}}s^{d}}{\|g_{s}\|_{L_{p}( L_{\infty}(\mathbb{R}^d)\overline{\otimes}\mathcal{M})}\,s^{-\frac{d}{p}}s^{d}}=\mbox{RHS}(\ref{1206.2}).
	\end{align*}
Then, combining Lemma \ref{250428.3}, assumption (\ref{250427.1}), and identity (\ref{1206.2}),  we obtain
	\begin{align}\label{0116.1}
		\|\sup_{\lambda\in\Lambda}\,(T_{\lambda}^{q})_{dis}f\|_{L_{p}(\mathcal{{N}})}\lesssim 3^d\|f\|_{L_{p}(\mathcal{{N}})},\quad f\in L_{p}(\mathcal{{N}}),
	\end{align}
where the implicit constant is independent of the dimension $d$.

	Now, we emphasize that $(T_{\lambda}^{q})_{dis}\neq (T_{\lambda})^{q}_{dis}$. 	Denote by $K_{\lambda}^{\#}$  the convolution kernel of $(T_{\lambda})^{q}_{dis}$.  It was shown in  \cite[Page 196]{Magyar} that
	$$
	K_{\lambda}^{\#}(n)=
	\begin{cases}
		q^dK_{\lambda}(n), & n\in q\mathbb{Z}^d,\\
		0,& n\in\mathbb{Z}^d, \quad\mbox{but} ~n\notin q\mathbb{Z}^d,
	\end{cases}
	$$
	which does not coincide with the convolution kernel of $(T_{\lambda}^{q})_{dis}$, given by  ${(K_{\lambda})}_{{1}/{q}}(n)=q^{d}K_{\lambda}(qn), \, n\in\mathbb{Z}^d$.
 
	Let $T_{\lambda}^{\#}$  be the convolution  operator with  kernel $K_{\lambda}^{\#}$, which maps functions on  $q\mathbb{Z}^d$ to $\mathcal{{M}}$, i.e.,
	\begin{align}\label{2485.13}
		T_{\lambda}^{\#}f(nq)=\sum_{m\in\mathbb{Z}^d}f(nq-mq)K_{\lambda}^{\#}(mq)=\sum_{m\in\mathbb{Z}^d}f(nq-mq)q^dK_{\lambda}(mq).
	\end{align}
Furthermore, define a function $\varrho(f)$  mapping from $\mathbb{Z}^d$ to $\mathcal{ {M}}$  by 
\begin{align*}
	\varrho(f)(n)=f(nq).
\end{align*}
It is clear that 
\begin{align}\label{250428.1}
		(T_{\lambda}^{q})_{dis}\varrho(f)(n)=&\sum_{m\in\mathbb{Z}^d}\varrho(f)(n-m)(K_{\lambda})_{{1}/{q}}(m)\nonumber
		\\=& \sum_{m\in\mathbb{Z}^d}\varrho(f)(n-m)q^dK_{\lambda}(mq)\nonumber\\ 
	=& \sum_{m\in\mathbb{Z}^d}f(nq-mq)q^dK_{\lambda}(mq)=T_{\lambda}^{\#}f(nq).
\end{align}
By invoking (\ref{0116.1})  and (\ref{250428.1}),  we conclude that   
\begin{align}\label{1206.3}
	\|\sup_{\lambda\in\Lambda}T^{\#}_{\lambda}f\|_{L_{p}(\ell_{\infty}(q\mathbb{Z}^d)\overline{\otimes}\mathcal{M})}=&\|\sup_{\lambda\in\Lambda}\,(T^{q}_{\lambda})_{dis}\varrho(f)\|_{L_{p}(\mathcal{{N}})}\nonumber\\
	\lesssim&3^d\|\varrho(f)\|_{L_{p}(\mathcal{{N}})}=3^d\|f\|_{L_{p}(\ell_{\infty}(q\mathbb{Z}^d)\overline{\otimes}\mathcal{M})},
\end{align} 
where the last equality follows from 
\begin{align*} 
	\|	\varrho(f)\|^{p}_{L_{p}(\mathcal{{N}})}=\tau \sum_{n\in\mathbb{Z}^d}|	\varrho(f)(n)|^{p} 
	=\tau \sum_{n\in\mathbb{Z}^d}|f(nq)|^{p} =\|f\|^{p}_{L_{p}(\ell_{\infty}(q\mathbb{Z}^d)\overline{\otimes}\mathcal{M})}.
\end{align*}

  For every $n\in\mathbb{Z}^d$, there exist  unique elements $n_{0}\in\mathbb{Z}^d$ and  $n_{1}\in\mathbb{Z}^d/q\mathbb{Z}^d$ such that 
 \begin{align*}
 	n=qn_{0}+n_{1}.
 \end{align*}
Given a function $f:\mathbb{Z}^d\rightarrow \mathcal{{M}}$,  we define the corresponding function  $f_{n_{1}}:q\mathbb{Z}^d\rightarrow \mathcal{{M}}$  by 
\begin{align*}
 f_{n_{1}}(qn_{0})=f(qn_{0}+n_{1}).
\end{align*}
we assert   that 
	\begin{align}\label{250515.4}
		(T_{\lambda})_{dis}^{q}f(n)=T_{\lambda}^{\#}f_{n_{1}}(qn_{0}).
	\end{align}
Indeed,  applying (\ref{2485.13}), we obtain 
	\begin{align*} 
		T_{\lambda}^{\#}f_{n_{1}}(qn_{0})
		=&\sum_{m\in\mathbb{Z}^d}f_{n_{1}}(qn_{0}-qm)q^dK_{\lambda}(qm)\nonumber\\
		=& \sum_{m\in\mathbb{Z}^d}f(n-qm)q^dK_{\lambda}(qm),
	\end{align*}
which coincides with 
	\begin{align*}
	(T_{\lambda})^{q}_{dis}f(n)=\sum_{m\in\mathbb{Z}^d}f(n-m)K_{\lambda}^{\#}(m)=\sum_{m\in\mathbb{Z}^d}f(n-qm)q^dK_{\lambda}(qm).
\end{align*}

	 From now on, without loss of
	generality, we assume that  $f\geq 0$. Then $ T^{\#}_{\lambda}f_{n_{1}}(\cdot)$ is a selfadjoint operator  since $ K_{\lambda}$ is a real-valued function.   Combining   $(\ref{1206.3})$ with \eqref{250427.2}, there exists  a positive operator-valued function $G_{n_{1}}(\cdot)\in L_{p}(\ell_{\infty}(q\mathbb{Z}^d)\overline{\otimes}\mathcal{M})_{+}$ such that 
	\begin{align}\label{250515.2}
		-G_{n_{1}}(\cdot)\leq T^{\#}_{\lambda}f_{n_{1}}(\cdot) \leq  G_{n_{1}}(\cdot),\quad\forall\,\lambda\in\Lambda,
	\end{align}
	with
	\begin{align}\label{250515.3}
		\|G_{n_{1}}(\cdot)\|_{L_{p}(\ell_{\infty}(q\mathbb{Z}^d)\overline{\otimes}\mathcal{M})}\lesssim 3^d \,\|f_{n_{1}}(\cdot)\|_{L_{p}(\ell_{\infty}(q\mathbb{Z}^d)\overline{\otimes}\mathcal{M})}.
	\end{align}
	Consequently, by (\ref{250515.4})-(\ref{250515.3}), we obtain 
	\begin{align*}
		\|\sup_{\lambda\in\Lambda}\,(T_{\lambda})^{q}_{dis}f\|_{L_{p}(\mathcal{{N}})}=&\|\sup_{\lambda\in\Lambda}\,T_{\lambda}^{\#}f_{n_{1}}(\cdot)\|_{L_{p}(\mathcal{{N}})}\\
		\leq & \big\|\|G_{n_{1}}(\cdot)\|^p_{L_{p}(\ell_{\infty}(q\mathbb{Z}^d)\overline{\otimes}\mathcal{M})}\big\|_{\ell_{1}(\mathbb{Z}^d/q\mathbb{Z}^d)}^\frac{1}{p}\\
		\lesssim& 3^d \,\big\|\|f_{n_{1}}(\cdot)\|^p_{L_{p}(\ell_{\infty}(q\mathbb{Z}^d)\overline{\otimes}\mathcal{M})}\big\|_{\ell_{1}(\mathbb{Z}^d/q\mathbb{Z}^d)}^\frac{1}{p} 
		=  3^d\,\|f\|_{L_{p}(\mathcal{{N}})},
	\end{align*}
	which completes the proof.
\end{proof}

Further,  we consider another convolution operator, whose Fourier multiplier is akin to (\ref{250603.2}). It takes the form
\begin{align*}
	m(\xi)=\sum_{x\in\mathbb{Z}^d}\gamma_{x}\Psi(\xi- {x}/{q}),\quad\xi\in\mathbb{R}^d,
\end{align*}
satisfying 
\begin{itemize}
	\item[(i)]  $\Psi\in C^{\infty}_{c}(q^{-1}Q)$ and $\sum_{n\in\mathbb{Z}^d}\widehat{\Psi}(n)\leq A$ for some positive constant $A$.
	
		\vspace{0.1cm}
	
	\item[(ii)]  $(\gamma_{x})_{x\in\mathbb{Z}^d}$ is a $q\mathbb{Z}^d$ periodic sequence, that is, $\gamma_{x}=\gamma_{x{'}}$, whenever  $x-x{'}\in q\mathbb{Z}^d$.
\end{itemize}

It was shown in \cite[Lemma 5.3]{chen-hong} that  
\begin{align}\label{250411.1}
	\|\mathcal{F}^{-1}(m\widehat{f}\,)\|_{L_{2}(\mathcal{{N}})}\leq A\sup_{x\in\mathbb{Z}^d}|\gamma_{x}|\|f\|_{L_{2}(\mathcal{{N}})}.
\end{align}

\section{Proof of Theorem \ref{0109.5}}\label{0118.4}
In this section, we prove the dimension-free estimate of the noncommutative discrete spherical maximal operator on $L_{p}(\mathcal{N})$ for $ 2\leq p\leq\infty$.  If $p=\infty$, there is nothing to do, since  $\mathcal{A}^{d}_{t}$ is an averaging operator. By
Theorem \ref{0.4}, it suffices to show 
	\begin{align}\label{250610.1} 
	\|\sup_{t\in \mathbb{D}}\mathcal{A}_{t}^{d}f\|_{L_{2}(\mathcal{N})}\lesssim\|f\|_{L_{2}(\mathcal{N})},
\end{align}
where  the implicit  constant is independent of the dimension $d$. In the following, we   consider the maximal functions corresponding to the operators $\mathcal{A}_{t}^{d}$ in which the supremum is restricted respectively to the sets:
\begin{itemize}
	\item[(i)]  the small-scale case:
	\begin{align*}
		\mathcal{D}_{c_{0}}=\left\{t \in\mathbb{D}:1\leq t\leq c_{0}d^{{1}/{2}}\right\};
	\end{align*}
	 	\item[(ii)]   the intermediate-scale case: 
	 	\begin{align*}
	 		\mathcal{D}_{c_{1},c_{2}}=\left\{t \in\mathbb{D}:c_{1}d^{{1}/{2}}\leq t\leq c_{2}d^{{3}/{2}}\right\};
	 	\end{align*}
	 	 \item[(iii)]   the large-scale case:
	 	 \begin{align*}
	 	 	\mathcal{D}_{c_{3},\infty}=\left\{t \in\mathbb{D}:t\geq c_{3}d^{{3}/{2}}\right\},
	 	 \end{align*}
\end{itemize}
for some universal constants $c_{0}, c_{1}, c_{2}, c_{3}>0$. Since we are working with the dyadic numbers $\mathbb{D}$, the
values of $c_{0}, c_{1}, c_{2}, c_{3}$  never play a role as long as they are absolute constants. Moreover, the implied constant in (\ref{250610.1}) is allowed to depend on  $c_{0}, c_{1}, c_{2}, c_{3}$.

\subsection{The small-scale and intermediate-scale cases}
This subsection is devoted to estimating the
maximal functions corresponding to $\mathcal{A}_{t}^{d}$ with the supremum taken over the sets $\mathcal{D}_{c_{0}}$ and $\mathcal{D}_{c_{1},c_{2}}$, respectively. 

\begin{theorem}\label{0108.4}
	Let $c_{0}> 0$ and  define $\mathcal{D}_{c_{0}} =\left\{t \in\mathbb{D}:1\leq t\leq c_{0}d^{ {1}/{2}}\right\}$. For every $f\in L_{2}(\mathcal{N})$, there exists a constant $C>0$ independent of the dimension $d$ such that
	\begin{align*} 
		\|\sup_{t\in \mathcal{D}_{c_{0}}}\mathcal{A}_{t}^{d}f\|_{L_{2}(\mathcal{N})}\leq C\|f\|_{L_{2}(\mathcal{N})}.
	\end{align*}
\end{theorem}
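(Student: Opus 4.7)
The plan is to split the Fourier multiplier of $\mathcal{A}_t^d$ on $\mathbb{T}^d$ into a main term that is already controlled by Lemma~\ref{1209.1} and an error term that can be handled by a noncommutative square function argument. Denote by $m_t^d$ the symbol of $\mathcal{A}_t^d$, i.e., $\widehat{\mathcal{A}_t^d f}(\xi)=m_t^d(\xi)\widehat f(\xi)$ for $\xi\in\mathbb{T}^d$. Writing $m_t^d=a_t+(m_t^d-a_t)$ and using the triangle inequality for the noncommutative $L_2$--$\ell_\infty$ norm, I would reduce matters to
\begin{align*}
\|\sup_{t \in \mathcal{D}_{c_0}} \mathcal{A}_t^d f\|_{L_2(\mathcal{N})} \leq \|\sup_{t \in \mathcal{D}_{c_0}} \mathcal{F}^{-1}(a_t \widehat f\,)\|_{L_2(\mathcal{N})} + \|\sup_{t \in \mathcal{D}_{c_0}} \mathcal{F}^{-1}((m_t^d-a_t) \widehat f\,)\|_{L_2(\mathcal{N})}.
\end{align*}
Since $\mathcal{D}_{c_0}\subset\mathbb{D}$, the first term is bounded by a dimension-free multiple of $\|f\|_{L_2(\mathcal{N})}$ directly by Lemma~\ref{1209.1}.

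For the error term, I would apply Proposition~\ref{8.10.1} to dominate the noncommutative supremum by the square function, and then use the operator-valued Plancherel formula \eqref{250609.1}:
\begin{align*}
\|\sup_{t \in \mathcal{D}_{c_0}} \mathcal{F}^{-1}((m_t^d-a_t) \widehat f\,)\|_{L_2(\mathcal{N})}^{2} &\lesssim \sum_{t \in \mathcal{D}_{c_0}} \|\mathcal{F}^{-1}((m_t^d-a_t) \widehat f\,)\|_{L_2(\mathcal{N})}^{2}\\
&= \sum_{t \in \mathcal{D}_{c_0}} \|(m_t^d-a_t) \widehat f\,\|_{L_2(L_\infty(\mathbb{T}^d) \overline\otimes \mathcal{M})}^{2}.
\end{align*}
Because $m_t^d-a_t$ is a scalar multiplier, it can be pulled out pointwise in $\xi$ under the operator-valued $L_2$ norm, so the full expression is bounded by $\|f\|_{L_2(\mathcal{N})}^{2}$ times $\sup_{\xi\in\mathbb{T}^d}\sum_{t \in \mathcal{D}_{c_0}} |m_t^d(\xi)-a_t(\xi)|^{2}$, and the desired inequality would follow once that supremum is bounded by a constant independent of the dimension $d$.

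The main obstacle is precisely the dimension-free pointwise bound
\begin{align*}
\sup_{\xi \in \mathbb{T}^d} \sum_{t \in \mathcal{D}_{c_0}} |m_t^d(\xi)-a_t(\xi)|^{2} \leq C,
\end{align*}
with $C$ independent of $d$. The subtlety is that $\mathcal{D}_{c_0}$ contains roughly $\log_2(c_0\sqrt{d})$ dyadic scales, so a uniform-in-$t$ estimate is useless and one needs genuine geometric decay in $t$ that survives the sum dimension-freely. To obtain it, I would adapt the Magyar--Stein--Wainger circle-method decomposition of the discrete spherical symbol in the spirit of \cite{mirek2023dimension}: isolate the contribution of the rational $0/1$, which produces $a_t(\xi)$ up to a smooth cutoff supported in $Q$, and estimate the contributions of rationals with denominator $q\geq 2$ via the corresponding modified Gauss sums, tracking each piece dimension-freely in the regime $t\leq c_0 d^{1/2}$. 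Because the decisive inequality above involves only scalar symbols, the operator-valued nature of $f$ plays no role in this hardest step and enters the argument only through the Plancherel identity \eqref{250609.1} and the square function bound of Proposition~\ref{8.10.1}.
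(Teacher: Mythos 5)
Your reduction via Proposition~\ref{8.10.1} and the operator-valued Plancherel formula is exactly the right mechanism, and the problem does indeed collapse to a scalar pointwise estimate on the multipliers. The gap is in your choice of comparison multiplier: the pointwise bound
\[
\sup_{\xi\in\mathbb{T}^d}\ \sum_{t\in\mathcal{D}_{c_0}}|\mathfrak{m}_t(\xi)-a_t(\xi)|^2\ \leq\ C
\]
with $C$ independent of $d$ is \emph{false}. Consider $\xi$ approaching $\tfrac{1}{2}\mathbf{1}$ from inside $Q$. There $\mathfrak{m}_t(\xi)\to(-1)^{\lambda}$ with $\lambda=t^2$; since $t=2^m\in\mathbb{D}$, this limit equals $1$ for every $m\geq1$. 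On the other hand $a_t(\xi)=\widehat{\mu}(t\xi)$, and $|t\xi|\approx t\sqrt{d}/2$, so $|a_t(\xi)|\lesssim(t/2)^{-1/2}\to 0$. Hence $|\mathfrak{m}_t-a_t|\geq 1/2$ for all dyadic $2\leq t\leq c_0\sqrt{d}$ near that frequency, and the sum over the $\sim\tfrac12\log_2 d$ scales in $\mathcal{D}_{c_0}$ is $\gtrsim\log d$, not $O(1)$. The same obstruction appears throughout the region $|V_\xi|>d/2$, where $\mathfrak{m}_t$ concentrates near $(-1)^{t^2}$ in a way that $\widehat{\mu}(t\,\cdot)$ does not see. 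Moreover, your proposed remedy — a Magyar--Stein--Wainger circle-method decomposition tracked dimension-freely — does not help here: the error term in that expansion (Proposition~\ref{1202.1}) is only small when $t\gtrsim d^{3/2}$, which is precisely the complementary large-scale regime, not $t\leq c_0 d^{1/2}$.

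The paper resolves this by \emph{two} comparison multipliers rather than one. It splits $f=f_1+f_2$ in frequency according to whether $|V_\xi|\leq d/2$ or $|V_\xi|>d/2$, compares $\mathfrak{m}_t$ with the discrete heat multiplier $p^1_{\lambda}(\xi)=\mathfrak{p}_{\kappa(d,\lambda)^2}(\xi)$ on the first piece, and with the modulated heat multiplier $p^2_{\lambda}(\xi)=(-1)^{\lambda}e^{-\kappa(d,\lambda)^2\sum\cos^2(\pi\xi_j)}$ on the second; the factor $(-1)^{\lambda}$ is exactly what is needed to track the oscillation near $\tfrac12\mathbf{1}$. Proposition~\ref{1205.4} then furnishes the $\min$-type decay in $t$ that makes the square-function sum dimension-free, and the maximal operator associated with $p^2_{\lambda}$ is reduced to $p^1_\lambda$ (hence to Proposition~\ref{8.8.2}) by the modulation $F_2(x)=(-1)^{\sum x_j}f_2(x)$ together with Lemma~\ref{0109.1}. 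Your skeleton (triangle inequality, square function, Plancherel) can be kept verbatim; what must change is the choice of main term and, correspondingly, the input from Mirek--Szarek--Wr\'obel.
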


For convenience, we write the operator $\mathcal{A}_{t}^{d}$  in convolution form with   kernel
\begin{align*}
	\mathcal{K}_{t}(x)=\frac{1}{|\mathbb{S}_{t}\cap\mathbb{Z}^d|}\chi_{\mathbb{S}_{t}\cap\mathbb{Z}^d}(x),\quad x\in\mathbb{Z}^d.
\end{align*}
The corresponding multipliers are given by
\begin{align}\label{250423.1}
	\mathfrak{m}_{t}(\xi) =\widehat{\mathcal{K}_{t}}(\xi)=\frac{1}{|\mathbb{S}_{t}\cap\mathbb{Z}^d|}\sum_{x\in\mathbb{S}_{t}\cap\mathbb{Z}^d}e^{2\pi \mathrm{i}\langle \xi,x\rangle}.
\end{align}
Let
\begin{align*} 
	V_{\xi}=\left\{j\in \mathbb{N}_{d}: \cos(2\pi \xi_{j})<0\right\}=\left\{j\in \mathbb{N}_{d}:  {1}/{4}<\|\xi_{j}\|\leq {1}/{2}\right\},\quad \xi\in\mathbb{T}^d. 
\end{align*} 
The following  two  suitable multipliers  are used to prove Theorem \ref{0108.4},
\begin{align*}
	&p_{\lambda}^{1}(\xi) =e^{-\kappa(d,\lambda)^2\sum_{j=1}^{d}\sin^2(\pi \xi_{j})}, &\mbox{if } |V_{\xi}|\leq{d}/{2},\\
	&p_{\lambda}^{2}(\xi) =(-1)^{\lambda}e^{-\kappa(d,\lambda)^2\sum_{j=1}^{d}\cos^2(\pi \xi_{j})}, &\mbox{if } |V_{\xi}|>{d}/{2}.
\end{align*} 
Both  of them are expressed in terms of a proportionality constant:
\begin{align*}
  \kappa(d,\lambda)=\bigg(\frac{\lambda}{d}\bigg)^\frac{1}{2}=\frac{t}{\sqrt{d}},
\end{align*}
where  $\lambda=t^2$ throughout the paper.

\begin{proposition}[\cite{mirek2023dimension}]\label{1205.4}
	Let $d\geq 5$ and suppose that $\kappa(d,\lambda)\leq  {1}/{5}$. For every $\xi\in\mathbb{T}^d$ and $\lambda\in\mathbb{N}$, there exists a constant $0<c<1$ such that 
	\begin{itemize}
			\item[(i)]  if $|V_{\xi}|\leq {d}/{2}$, then 
		\begin{align*} 
			|\mathfrak{m}_{t}(\xi)-p_{\lambda}^{1}(\xi)|\lesssim\min\left\{e^{-\frac{c\kappa(d,\lambda)^2}{400}\sum_{j=1}^{d}\sin^2(\pi \xi_{j})},\kappa(d,\lambda)^2\sum_{j=1}^{d}\sin^2(\pi \xi_{j})\right\};
		\end{align*}
			\item[(ii)]  if $|V_{\xi}|>\frac{d}{2}$, then 
		\begin{align*} 
			|\mathfrak{m}_{t}(\xi)-p_{\lambda}^{2}(\xi)|\lesssim\min\left\{e^{-\frac{c\kappa(d,\lambda)^2}{400}\sum_{j=1}^{d}\cos^2(\pi \xi_{j})},\kappa(d,\lambda)^2\sum_{j=1}^{d}\cos^2(\pi \xi_{j})\right\}.
		\end{align*}
	\end{itemize}
\end{proposition}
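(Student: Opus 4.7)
The plan is to use the circle-method representation of $\mathfrak{m}_{t}(\xi)$. Writing $\lambda = t^{2}$, one has
\begin{align*}
\mathfrak{m}_{t}(\xi) = \frac{1}{r_{d}(\lambda)} \int_{0}^{1} \prod_{j=1}^{d} \theta(s,\xi_{j})\, e^{-2\pi \mathrm{i} \lambda s}\, ds,
\end{align*}
where $r_{d}(\lambda) = |\mathbb{S}_{t} \cap \mathbb{Z}^{d}|$ and $\theta(s,\eta) = \sum_{n \in \mathbb{Z}} e^{2\pi \mathrm{i}(n^{2} s + n\eta)}$. The Gaussian approximants $p_{\lambda}^{1}$ and $p_{\lambda}^{2}$ should emerge as the dominant contributions from the major arcs centered at $s = 0$ and $s = 1/2$, respectively. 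The choice of which major arc dominates is governed precisely by whether $|V_{\xi}| \leq d/2$ (so that $\prod_{j} |\theta(s,\xi_{j})|$ peaks at $s=0$) or $|V_{\xi}| > d/2$ (where the peak shifts to $s = 1/2$, producing the $(-1)^{\lambda}$ factor and the replacement $\sin \leftrightarrow \cos$).

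First, I would apply the Jacobi transformation for $\theta$ on a neighborhood of $s = 0$ of width $\sim 1/\sqrt{d\lambda}$, obtaining a local Gaussian expansion for each factor $\theta(s,\xi_{j})$. Substituting into the oscillatory integral and carrying out a saddle-point computation produces a factor of $r_{d}(\lambda)$ together with exactly $e^{-\kappa(d,\lambda)^{2}\sum_{j}\sin^{2}(\pi \xi_{j})}$; this matches $r_{d}(\lambda)\, p_{\lambda}^{1}(\xi)$ up to controllable errors. The parallel analysis near $s = 1/2$ (using $\theta(1/2 + u, \eta) = e^{\pi \mathrm{i} \eta^{2}}(\ldots)$-type identities that convert sines into cosines and supply the sign $(-1)^{\lambda}$) yields the corresponding statement for $p_{\lambda}^{2}$.

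The two-headed $\min$ bound then comes by combining two estimates for the major-arc error:
\begin{align*}
|\mathfrak{m}_{t}(\xi) - p_{\lambda}^{i}(\xi)| \;\lesssim\; |p_{\lambda}^{i}(\xi)| \quad \text{and} \quad |\mathfrak{m}_{t}(\xi) - p_{\lambda}^{i}(\xi)| \;\lesssim\; \kappa(d,\lambda)^{2}\sum_{j} \sin^{2}(\pi\xi_{j}),
\end{align*}
for $i=1$ (the second is Taylor's inequality exploiting $\mathfrak{m}_{t}(0) = p_{\lambda}^{1}(0) = 1$, and the analogous tangent-line estimate near $s=1/2$ for $i=2$). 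The factor $1/400$ is absorbed into $c$ via the smallness condition $\kappa(d,\lambda) \leq 1/5$, which is what makes the Gaussian dominate. The constant $0<c<1$ reflects a modest shrinkage of the Gaussian in passing through the saddle-point estimate.

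The hard part is the uniform-in-$d$ control of the error from the minor arcs, i.e., from $s$ bounded away from $0$ and $1/2$. Here one needs sharp bounds of Weyl/van~der~Corput type on $\prod_{j} \theta(s,\xi_{j})$ combined with an efficient lower bound $r_{d}(\lambda) \gtrsim^{d} \lambda^{d/2 - 1}$; the assumption $d \geq 5$ ensures that the resulting decay on the minor arcs beats both targets on the right-hand side of the proposition. Since the statement is quoted from \cite{mirek2023dimension}, the quantitative saddle-point and minor-arc estimates are the core technical input, and we would carry them out verbatim following that reference.
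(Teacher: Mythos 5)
The paper does not prove this proposition; it is quoted verbatim from \cite{mirek2023dimension}, so there is no in-paper argument to compare against. What I can say is that your outline does not reflect how the cited reference actually establishes this small-scale estimate, and it glosses over steps that would fail if pursued. The condition $\kappa(d,\lambda)\le 1/5$ means $\lambda\le d/25$, i.e.\ the sphere $\mathbb{S}_t\cap\mathbb{Z}^d$ consists almost entirely of vectors with coordinates in $\{-1,0,1\}$. In this regime Mirek--Szarek--Wr\'obel do \emph{not} pass through the theta-function/circle-method representation $\int_0^1\prod_j\theta(s,\xi_j)e^{-2\pi i\lambda s}\,ds$; instead they compare $\mathfrak{m}_t$ to a Bernoulli-type product proxy (essentially the normalized elementary symmetric polynomial $\binom{d}{\lambda}^{-1}e_\lambda(\cos 2\pi\xi_1,\dots,\cos 2\pi\xi_d)$, coming from the profile of $\pm1$ coordinates), and then compare that proxy to $p_\lambda^1(\xi)=\prod_j e^{-\kappa(d,\lambda)^2\sin^2(\pi\xi_j)}$ by elementary inequalities. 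The explicit factor $1/400$ in the exponent and the threshold $\kappa\le 1/5$ are artifacts of exactly this kind of combinatorial comparison, not of a saddle point. The circle method is indeed the engine behind the \emph{large-scale} case (see Proposition~\ref{1202.1} and the Gaussian-sum expansion of $\mathfrak{m}_t$ in Section~\ref{0118.4}), so you have identified the right tool but applied it to the wrong regime.

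Beyond being a different route, the sketch has concrete gaps. (a) The Jacobi/Poisson transformation of $\theta(s,\cdot)$ near $s=0$ produces Gaussian decay in the real variable $\xi_j$ (i.e.\ in $\|\xi_j\|$), not in $\sin(\pi\xi_j)$; the $\sin^2(\pi\xi_j)$ structure of $p_\lambda^1$ is specific to the lattice/torus and does not fall out of a continuous saddle point — you would get a different, non-periodic approximant and would still have to do the comparison you are trying to avoid. (b) For $\lambda\lesssim d$ there is no useful length for Weyl differencing, so a uniform-in-$d$ minor-arc bound on $\prod_{j=1}^d\theta(s,\xi_j)$ together with the matching lower bound on $r_d(\lambda)$ is far from routine and is not ``carried out verbatim'' anywhere in \cite{mirek2023dimension}, since that paper does not take this route here. (c) Your proposed first half of the $\min$, namely $|\mathfrak{m}_t(\xi)-p_\lambda^i(\xi)|\lesssim |p_\lambda^i(\xi)|$, is a much stronger assertion than the stated bound $e^{-c\kappa^2\sum\sin^2(\pi\xi_j)/400}$; the factor-$400$ loss in the exponent is not a cosmetic ``shrinkage'' but precisely what the combinatorial proof can afford, and asserting the sharper inequality as a byproduct of a saddle point is not justified. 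In short: the proposal substitutes a plausible-sounding but unexecuted circle-method argument for a combinatorial one, and the places where the two would genuinely differ (the emergence of $\sin^2$, the minor arcs at small $\lambda$, the $1/400$) are exactly where the sketch is silent.
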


\noindent $\mathbf{Proof~of~Theorem~\ref{0108.4}:}$ Let
$f\in L_{2}(\mathcal{N})$ and decompose it as $f=f_{1}+f_{2}$, where $\widehat{f_{1}}(\xi)=\widehat{f}(\xi)\chi_{\left\{\eta\in \mathbb{T}^{d}:~|V_{\eta}|\leq  {d}/{2}\right\}}(\xi)$. Applying the triangle inequality, we obtain
\begin{align}\label{2485.5}
	\|\sup_{t\in \mathcal{D}_{c_{0}} }\mathcal{A}_{t}^df&\|_{L_{2}\mathcal{(N)}}=\|\sup_{t\in \mathcal{D}_{c_{0}}}\mathcal{F}^{-1}(\mathfrak{m}_{t}\widehat{f}\,)\|_{L_{2}\mathcal{(N)}}\nonumber\\
	&\leq\sum_{k=1}^{2}\|\sup_{t\in \mathcal{D}_{c_{0}}}\mathcal{F}^{-1}(p_{t^2}^{k}\widehat{f_{k}})\|_{L_{2}\mathcal{(N)}}+\sum_{k=1}^{2}\|\sup_{t\in \mathcal{D}_{c_{0}}}\mathcal{F}^{-1}\big((\mathfrak{m}_{t}-p_{t^2}^{k})\widehat{f_{k}}\big)\|_{L_{2}\mathcal{(N)}}.
\end{align}
Recalling  $p_{t^2}^{1}(\xi)=\mathfrak{p}_{\kappa(d,\lambda)^2}(\xi)$ and Proposition \ref{8.8.2},  one has
\begin{align}\label{1220.6}
	\|\sup_{t\in \mathcal{D}_{c_{0}}}\mathcal{F}^{-1}(p_{t^2}^{1}\widehat{f_{1}})\|_{L_{2}(\mathcal{N})}=\|\sup_{t\in \mathcal{D}_{c_{0}}}P_{\kappa(d,\lambda)^2}f_1\|_{L_{2}(\mathcal{N})}\lesssim\|f_{1}\|_{L_{2}(\mathcal{N})}\leq \|f\|_{L_{2}(\mathcal{N})},
\end{align}
where the implicit constant is independent of the dimension $d$. We claim that 
\begin{align*} 
	\|\sup_{t\in \mathcal{D}_{c_{0}}}\mathcal{F}^{-1}(p_{t^2}^{2}\widehat{f_{2}})\|_{L_{2}(\mathcal{N})}\lesssim\|f\|_{L_{2}(\mathcal{N})} 
\end{align*}
is deduced from (\ref{1220.6}). In fact, let $F_{2}(x)=(-1)^{\sum_{j=1}^{d} x_{j}}f_{2}(x)$, and   calculate  
\begin{align*}
	\widehat{F_{2}}(\xi)&=\sum_{x\in \mathbb{Z}^{d}}e^{-2\pi \mathrm{i} \langle x, \xi \rangle }(-1)^{\sum_{j=1}^{d} x_{j}}f_{2}(x)\\
	&=\sum_{x\in \mathbb{Z}^{d}}e^{-2\pi \mathrm{i} \langle x, \xi \rangle}(e^{\pi \mathrm{i}})^{\sum_{j=1}^{d} x_{j}}f_{2}(x)\\
	&=\sum_{x\in \mathbb{Z}^{d}}e^{-2\pi \mathrm{i} \langle x, \xi-{ {\textbf{1}}/{2}}\rangle}f_{2}(x)=\widehat{f_{2}}(\xi-{{\textbf{1}}/{2}}),
\end{align*}
where $\mathbf1=(1,\cdots,1)\in\mathbb{Z}^d$. Consequently,
\begin{align*}
	\|\sup_{t\in \mathcal{D}_{c_{0}}} \mathcal{F}^{-1}(p_{t^2}^{2}\widehat{f_{2}})\|_{L_{2}(\mathcal{N})}
	=&	\bigg\|\sup_{t\in \mathcal{D}_{c_{0}}} \int_{\mathbb{T}^{d}}p_{t^2}^{2}(\xi)\widehat{f_{2}}(\xi)e^{2\pi \mathrm{i} \langle \cdot, \xi \rangle}d\xi\bigg\|_{L_{2}(\mathcal{N})} \\
	=&\bigg\|\sup_{t\in \mathcal{D}_{c_{0}}} \int_{\mathbb{T}^{d}}(-1)^{\lambda}e^{-\kappa(d,\lambda)^2\sum_{j=1}^{d}\cos^2(\pi \xi_{j})}\widehat{f_{2}}(\xi)e^{2\pi \mathrm{i} \langle \cdot, \xi \rangle}d\xi\bigg\|_{L_{2}(\mathcal{N})}\\
	=&\|\sup_{t\in \mathcal{D}_{c_{0}}} \mathcal{F}^{-1}(p_{t^2}^{1}\widehat{F_{2}})(\cdot)\, e^{ -\pi \mathrm{i} \langle \cdot,\textbf{1}  \rangle}\|_{L_{2}(\mathcal{N})}\\ 
	\leq&\|\sup_{t\in \mathcal{D}_{c_{0}}} \mathcal{F}^{-1}(p_{t^2}^{1}\widehat{F_{2}})\|_{L_{2}(\mathcal{N})} 
	\lesssim\|f\|_{L_{2}(\mathcal{N})}, 
\end{align*}
where we  use Lemma \ref{0109.1}, (\ref{1220.6}), along with the fact that  $\|F_{2}\|_{L_{2}(\mathcal{N})}=\|f_{2}\|_{L_{2}(\mathcal{N})}\leq \|f\|_{L_{2}(\mathcal{N})}$.

Now, we apply Proposition \ref{8.10.1} to estimate the  second term on the  right-hand side of (\ref{2485.5}), beginning with the case $k=1$. More precisely,    
\begin{align*} 
	\|\sup_{t\in \mathcal{D}_{c_{0}}}\mathcal{F}^{-1}\big((\mathfrak{m}_{t}-p_{t^2}^{1})\widehat{f_{1}}\big)\|^2_{L_{2}(\mathcal{N})}
	\lesssim&\bigg\|\bigg(\sum_{t\in \mathcal{D}_{c_{0}}}\big|\mathcal{F}^{-1}\big((\mathfrak{m}_{t}-p_{t^2}^{1})\widehat{f_{1}}\big)\big|^2\bigg)^{ {1}/{2}}\bigg\|^2_{L_{2}(\mathcal{N})}\nonumber\\
	=& \sum_{t\in \mathcal{D}_{c_{0}}}\|\mathcal{F}^{-1}\big((\mathfrak{m}_{t}-p_{t^2}^{1})\widehat{f_{1}}\big)\|^2_{L_{2}(\mathcal{N})}\nonumber\\
	=&\sum_{t\in \mathcal{D}_{c_{0}}}\tau\int_{\mathbb{T}^{d}}|\mathfrak{m}_{t}(\xi)-p_{t^2}^{1}(\xi)|^{2}|\widehat{f_{1}}(\xi)|^2d\xi.
\end{align*}
Similar to (\ref{250515.5}) in the proof of Lemma \ref{1209.1}, by setting $t=2^{m}\in \mathcal{D}_{c_{0}} $ with $m\in\mathbb{N}$, and applying Proposition \ref{1205.4},
we obtain 
\begin{align*}
	\|\sup_{t\in \mathcal{D}_{c_{0}}}\mathcal{F}^{-1}\big((\mathfrak{m}_{t}-p_{t^2}^{1})\widehat{f_{1}}\big)\|_{L_{2}\mathcal{(N)}}\lesssim \|f_{1}\|_{L_{2}\mathcal{(N)}}\leq \|f\|_{L_{2}\mathcal{(N)}},
\end{align*}
where the implicit constant is independent of the dimension $d$. 

The same argument also applies to $k=2$, which completes the proof.  $ \hfill\square $

\begin{theorem}\label{0108.5}
	Let $c_{1},c_{2}> 0$ and define  $\mathcal{D}_{c_{1},c_{2}}=\left\{t \in\mathbb{D}:c_{1}d^{{1}/{2}}\leq t\leq c_{2}d^{{3}/{2}}\right\}$. Then for every $f\in L_{2}\mathcal{(N)}$, there exists a constant $C> 0$ independent of the dimension $d$ such that  
	\begin{align*} 
		\|\sup_{t\in \mathcal{D}_{c_{1},c_{2}}}\mathcal{A}_{t}^{d}f\|_{L_{2}(\mathcal{N})}\leq C\|f\|_{L_{2}(\mathcal{N})}.
	\end{align*}
\end{theorem}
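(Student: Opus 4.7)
The plan is to mirror the template used in the proof of Theorem \ref{0108.4}: split $\mathfrak{m}_t$ as a ``dimension-free main term'' plus an error that can be handled by the noncommutative square function. What must change in the intermediate regime is the choice of surrogate, because $\kappa(d,\lambda)=t/\sqrt d$ can be much larger than $1$ and the Gaussian multipliers $p_\lambda^k$ from Theorem \ref{0108.4} no longer approximate $\mathfrak{m}_t$ well. The natural replacement is the periodization of the Fourier transform of the normalized spherical measure, namely $a_t(\xi)=\widehat{\mu}(t(\xi-\llbracket\xi\rrbracket))$ introduced in Lemma \ref{1209.1}; since $\xi\in Q$ identifies with $\mathbb{T}^d$, one has $a_t(\xi)=\widehat{\mu}(t\xi)$ throughout.

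First I would apply the triangle inequality to write
\begin{align*}
\|\sup_{t\in\mathcal{D}_{c_{1},c_{2}}}\mathcal{A}_t^d f\|_{L_2(\mathcal{N})}
\leq \|\sup_{t\in\mathbb{D}}\mathcal{F}^{-1}(a_t\widehat{f}\,)\|_{L_2(\mathcal{N})}+\|\sup_{t\in\mathcal{D}_{c_{1},c_{2}}}\mathcal{F}^{-1}\bigl((\mathfrak{m}_t-a_t)\widehat{f}\,\bigr)\|_{L_2(\mathcal{N})}.
\end{align*}
The first summand is bounded by $C\|f\|_{L_2(\mathcal{N})}$ with $C$ independent of $d$ directly by Lemma \ref{1209.1}. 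For the second, I would invoke the noncommutative square-function Proposition \ref{8.10.1} and the operator-valued Plancherel formula \eqref{250609.1}, exactly as in \eqref{250508.1}, reducing matters to
\begin{align*}
\|\sup_{t\in\mathcal{D}_{c_{1},c_{2}}}\mathcal{F}^{-1}\bigl((\mathfrak{m}_t-a_t)\widehat{f}\,\bigr)\|_{L_2(\mathcal{N})}^2
\lesssim \tau\int_{\mathbb{T}^d}\Bigl(\sum_{t\in\mathcal{D}_{c_{1},c_{2}}}|\mathfrak{m}_t(\xi)-\widehat{\mu}(t\xi)|^2\Bigr)|\widehat{f}(\xi)|^2\,d\xi.
\end{align*}
Hence it suffices to establish the dimension-free pointwise bound $\sup_{\xi\in\mathbb{T}^d}\sum_{t\in\mathcal{D}_{c_{1},c_{2}}}|\mathfrak{m}_t(\xi)-\widehat{\mu}(t\xi)|^2\lesssim 1$.

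The main obstacle is thus the intermediate-scale analogue of the two-sided estimate \eqref{1202.2}, of the form
\begin{align*}
|\mathfrak{m}_t(\xi)-\widehat{\mu}(t\xi)| \lesssim \min\bigl\{t^2\|\xi\|^2/d,\ (t\|\xi\|/\sqrt d)^{-1/2}\bigr\},\quad \xi\in\mathbb{T}^d,
\end{align*}
valid uniformly in the dimension. The small-$\|\xi\|$ bound is obtained by comparing the Taylor expansions of $\mathfrak{m}_t$ and of $\widehat{\mu}(t\xi)$ near the origin, exploiting the isotropy identity $\int_{\mathbb{S}}x_j^2\,d\mu=1/d$; the large-$t\|\xi\|$ bound combines the decay $|\widehat{\mu}(\xi)|\lesssim(|\xi|/\sqrt d)^{-1/2}$ recalled from \cite[Lemma 4.2]{mirek2023dimension} with a dimension-free upper bound on $|\mathfrak{m}_t(\xi)|$ itself. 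As in Theorem \ref{0108.4}, the region $|V_\xi|>d/2$ is reduced to $|V_\xi|\leq d/2$ via the translation trick $F_2(x)=(-1)^{\sum_j x_j}f_2(x)$ and Lemma \ref{0109.1}(i). Once the displayed two-scale bound is in hand, substituting $t=2^m$ and splitting the sum at $m\asymp \log_2(\sqrt d/\|\xi\|)$ produces two geometric series each bounded by an absolute constant, exactly as in the balancing calculation \eqref{250515.5}. The delicate part of the whole argument is therefore the second estimate: controlling $|\mathfrak{m}_t(\xi)|$ itself dimension-freely in the intermediate range, since its arithmetic origin makes direct estimation subtle and is the place where the geometry of spherical lattice points is really used.
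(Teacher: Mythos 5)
Your overall template (surrogate multiplier, triangle inequality, noncommutative square function, pointwise bound summed over dyadic $t$) is the right one, and using $a_t(\xi)=\widehat{\mu}(t\xi)$ together with Lemma \ref{1209.1} for the main term is a legitimate variant. However, the paper does not do this: its proof of Theorem \ref{0108.5} keeps the decomposition $f=f_1+f_2$ and the Gaussian surrogates $p^1_\lambda,p^2_\lambda$ from Theorem \ref{0108.4}, and replaces Proposition \ref{1205.4} by Proposition \ref{1205.5}. That citation is the entire content of the intermediate-scale case, and your proposal silently drops the crucial feature of Proposition \ref{1205.5}: the additive arithmetic error term $\kappa(d,\lambda)^{-1}$.

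This is a genuine gap. Your proposed pointwise bound
\[
|\mathfrak{m}_t(\xi)-\widehat{\mu}(t\xi)|\lesssim\min\bigl\{\kappa(d,\lambda)^2\|\xi\|^2,\ (\kappa(d,\lambda)\|\xi\|)^{-1/2}\bigr\}
\]
is almost certainly false on $\mathcal D_{c_1,c_2}$; the true statement carries the extra $\kappa(d,\lambda)^{-1}$ that appears in Proposition \ref{1205.5}. That term is not an artifact of using $p^1_\lambda$ rather than $\widehat{\mu}(t\cdot)$: by the triangle inequality and \eqref{1202.2}, replacing the surrogate changes the $\min$-type part of the estimate but cannot absorb the $\kappa^{-1}$ term, which reflects the imperfect equidistribution of $\mathbb{S}_t\cap\mathbb{Z}^d$ at scales $t\lesssim d^{3/2}$. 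Your ``Taylor expansion near $0$ plus decay of $\widehat\mu$'' sketch is a purely analytic comparison and cannot see this lattice-point phenomenon; as you yourself note, controlling $|\mathfrak{m}_t(\xi)|$ dimension-freely is the delicate point, but you then assert a bound that would require that delicate work to already be done. The argument is salvageable — including $\kappa^{-1}$ in the bound, the sum $\sum_{t=2^m\geq c_1\sqrt d}\kappa(d,4^m)^{-2}=\sum_m d/4^m$ still converges to an absolute constant — but then proving the corrected bound is exactly the content of Mirek--Szarek--Wr\'obel's intermediate-scale proposition, which you neither prove nor invoke. A secondary issue: the translation trick for $|V_\xi|>d/2$ transforms $\mathfrak m_t$ into $(-1)^\lambda\mathfrak m_t(\cdot\pm\mathbf1/2)$ but does not take $a_t$ to $a_t$, so you would need a second surrogate $(-1)^\lambda a_t(\cdot-\mathbf1/2)$ (and a corresponding invocation of Lemma \ref{1209.1} plus Lemma \ref{0109.1}(i)) on the $f_2$ piece; the present write-up applies $a_t$ to all of $f$ and then invokes the shift, which does not close.
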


The strategy of proving  Theorem \ref{0108.5}  is similar to the proof of Theorem \ref{0108.4}. We omit the details here. The only difference is the use of following Proposition \ref{1205.5} in place of Proposition \ref{1205.4}. 

\begin{proposition}[\cite{mirek2023dimension}]\label{1205.5}
	Let $d, \lambda \in \mathbb{N}$ with $100d\leq\lambda\leq d^3$. For every $\xi\in\mathbb{T}^d$,  we have the following bounds:
	\begin{itemize}
			\item[(i)]  if $|V_{\xi}|\leq {d}/{2}$, then 
		\begin{align*}
			|\mathfrak{m}_{t}(\xi)-p_{\lambda}^{1}(\xi)|\lesssim\min\left\{\kappa(d,\lambda)\|\xi\|,\big(\kappa(d,\lambda)\|\xi\|\big)^{-1}\right\}+\kappa(d,\lambda)^{-1};
		\end{align*}
			\item[(ii)]  if $|V_{\xi}|>{d}/{2}$, then 
		\begin{align*}
			|\mathfrak{m}_{t}(\xi)-p_{\lambda}^{2}(\xi)|\lesssim\min\left\{\kappa(d,\lambda)\|\xi+{\bm{1}}/{2}\|,\big(\kappa(d,\lambda)\|\xi+ {\bm{1}}/{2}\|\big)^{-1}\right\}+\kappa(d,\lambda)^{-1}.
		\end{align*}
	\end{itemize}
\end{proposition}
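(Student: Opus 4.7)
\textbf{Proof proposal for Proposition \ref{1205.5}.} The plan is to obtain the pointwise multiplier estimates by asymptotic analysis of the discrete spherical sum via the circle method. Writing $\lambda = t^2$, Fourier inversion on $\mathbb{Z}$ represents the numerator of $\mathfrak{m}_t(\xi)$ as
\begin{align*}
	\sum_{x \in \mathbb{S}_t \cap \mathbb{Z}^d} e^{2\pi i \langle \xi, x \rangle} = \int_0^1 e^{2\pi i \alpha \lambda} \prod_{j=1}^d \vartheta(\alpha, \xi_j)\, d\alpha,
\end{align*}
where $\vartheta(\alpha, \eta) = \sum_{n \in \mathbb{Z}} e^{-2\pi i \alpha n^2 + 2\pi i \eta n}$; the denominator $|\mathbb{S}_t \cap \mathbb{Z}^d|$ is obtained as the special case $\xi = 0$. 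Poisson summation (the Jacobi functional equation) supplies, for $\alpha$ close to $0$ or to $1/2$, an asymptotic formula for $\vartheta(\alpha, \eta)$ whose leading term is of size $\alpha^{-1/2}$ times a Gaussian-type factor in $\eta$, while away from these two rationals $|\vartheta(\alpha, \eta)|$ is controlled by standard Weyl-type minor-arc estimates. The restriction $100 d \le \lambda \le d^3$ is exactly what makes $0$ and $1/2$ the only relevant major-arc centres in this range.

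The strategy is to split $[0,1]$ into two major arcs $I_0$ around $0$ and $I_{1/2}$ around $1/2$, each of width comparable to $1/t$, and a complementary minor-arc set. On $I_0$, a steepest-descent / stationary-phase evaluation of the leading Jacobi term, followed by normalization by $|\mathbb{S}_t \cap \mathbb{Z}^d|$, identifies $p_\lambda^1(\xi)$ as the main contribution once the replacement $\sin^2(\pi \xi_j) \approx (\pi \xi_j)^2$ has been separated as an error. The analogous analysis on $I_{1/2}$, via the substitution $\alpha \mapsto \alpha - 1/2$ and the identity $e^{-\pi i n^2}=(-1)^n$, produces the main term $(-1)^\lambda p_\lambda^2(\xi)$ with $\sin^2$ replaced by $\cos^2$. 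Which of $I_0$ and $I_{1/2}$ supplies the dominant contribution is decided, coordinate by coordinate, by whether $\|\xi_j\|$ is smaller or larger than $1/4$; aggregating these coordinate decisions, the overall dichotomy $|V_\xi| \le d/2$ versus $|V_\xi| > d/2$ selects $p_\lambda^1$ or $p_\lambda^2$ as the correct approximation.

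The error bound then splits into three pieces. The shape error on the dominant major arc, coming from the Taylor correction $\sin^2(\pi \xi_j) - (\pi \xi_j)^2$ and from truncating the Jacobi expansion, accounts for the $\min\{\kappa(d,\lambda)\|\xi\|, (\kappa(d,\lambda)\|\xi\|)^{-1}\}$ factor: the first bound is the linear Taylor estimate of the Gaussian exponent when $\kappa\|\xi\|$ is small, and the second comes from integration by parts against the oscillatory factor $e^{2\pi i \alpha \lambda}$, which yields dimension-free decay when $\kappa\|\xi\|$ is large. The additive $\kappa(d,\lambda)^{-1}$ term absorbs both the subordinate major-arc contribution and the minor-arc contribution (the latter bounded by a Weyl-type inequality that is effective under the hypothesis $\lambda \le d^3$), together with the analogous error in the asymptotic for $|\mathbb{S}_t\cap\mathbb{Z}^d|$ itself; the hypothesis $\lambda \ge 100 d$ ensures that the major arcs have length $\lesssim 1/\sqrt{d}$ so that the Farey-type decomposition is legitimate. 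The main technical obstacle is to keep every implicit constant truly dimension-free: the crucial structural feature that makes this possible is that the integrand factorizes across coordinates as $\prod_{j=1}^d \vartheta(\alpha, \xi_j)$, so that neither the modular asymptotic for $\vartheta$ nor the Taylor correction accumulates in $d$. Since this argument is carried out in full in Mirek et al.~\cite{mirek2023dimension}, we treat Proposition~\ref{1205.5} as a black-box ingredient when proving Theorem~\ref{0108.5}.
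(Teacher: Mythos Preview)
The paper does not prove this proposition at all: it is quoted verbatim from \cite{mirek2023dimension} and used as a black-box input for Theorem~\ref{0108.5}, which is exactly how your proposal concludes. Your circle-method sketch is a faithful outline of the argument in that reference (one minor quibble: the symbol $\vartheta$ is already reserved in this paper for the one-dimensional cutoff in \eqref{250415.1}, so a different letter for the Jacobi theta would avoid a clash).
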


\subsection{The large-scale case}
This subsection is devoted to estimating the
maximal functions corresponding to $\mathcal{A}_{t}^{d}$ with the supremum taken over the set $\mathcal{D}_{c_{3},\infty}$.
\begin{theorem}\label{0115.1}
	Let $c_{3}> 0$ and define $\mathcal{D}_{c_{3},\infty}=\left\{t \in\mathbb{D}:t\geq c_{3}d^{{3}/{2}}\right\}$. For every $f\in L_{2}(\mathcal{N})$, there exists  a constant $C> 0$ independent of the dimension $d$ such that 
	\begin{align*}
		\|\sup_{t\in  \mathcal{D}_{c_{3},\infty}}\mathcal{A}_{t}^{d}f\|_{L_{2}(\mathcal{N})}\leq C\|f\|_{L_{2}(\mathcal{N})}.
	\end{align*}
\end{theorem}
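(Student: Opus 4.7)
The strategy is modeled on the proof of Theorem \ref{0108.4}: decompose the symbol $\mathfrak{m}_t$ of $\mathcal{A}_t^d$ (see \eqref{250423.1}) into a main term that can be absorbed into one of the tools of Section \ref{250414.1}, plus an error with square-summable behaviour in $t$. In the large-scale regime, the natural choice for the main term is the periodization
\begin{align*}
a_t(\xi) = \widehat{\mu}\bigl(t(\xi-\llbracket \xi \rrbracket)\bigr)
\end{align*}
of the continuous spherical multiplier, which arises as the $q=1$ contribution in the Magyar--Stein--Wainger circle-method expansion of $\mathfrak{m}_t$. Writing $\mathfrak{m}_t = a_t + (\mathfrak{m}_t - a_t)$ and applying the triangle inequality reduces matters to estimating the two pieces separately.

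For the main piece $\mathcal{F}^{-1}(a_t \widehat{f}\,)$, Lemma \ref{1209.1} delivers the desired dimension-free $L_{2}(\mathcal{N})$ maximal inequality immediately, since the supremum over $\mathcal{D}_{c_{3},\infty}$ is dominated by the supremum over all of $\mathbb{D}$. For the error piece, I would imitate the square-function argument used in the proof of Lemma \ref{1209.1}: by Proposition \ref{8.10.1} combined with the Plancherel identity \eqref{250609.1},
\begin{align*}
\Big\|\sup_{t \in \mathcal{D}_{c_{3},\infty}}\mathcal{F}^{-1}\bigl((\mathfrak{m}_t - a_t)\widehat{f}\,\bigr)\Big\|^{2}_{L_{2}(\mathcal{N})} \lesssim \tau\int_{\mathbb{T}^d}\Big(\sum_{t \in \mathcal{D}_{c_{3},\infty}} |\mathfrak{m}_t(\xi) - a_t(\xi)|^{2}\Big)|\widehat{f}(\xi)|^{2}\, d\xi.
\end{align*}
The problem then reduces to the pointwise bound
\begin{align*}
\sum_{t \in \mathcal{D}_{c_{3},\infty}} |\mathfrak{m}_t(\xi) - a_t(\xi)|^{2} \leq C \qquad \text{uniformly in } \xi \in \mathbb{T}^d \text{ and in } d.
\end{align*}

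The hard part is precisely this uniform $\ell^{2}_{t}$ estimate on the Fourier side, and it is the whole reason for restricting to $t \geq c_{3} d^{3/2}$. I would obtain it from the dimension-free form of the Magyar--Stein--Wainger circle-method approximation established in \cite{mirek2023dimension}: in this range, $\mathfrak{m}_t$ agrees with $a_t$ up to (i) contributions localized near Farey fractions $a/q$ with $q \geq 2$, and (ii) an asymptotic remainder decaying polynomially in $t$. Contributions of type (i) have the form $\sum_{a}\gamma_{a,q}\Psi(\xi - a/q)$ with bounded coefficients, so their maximal function is controlled dimension-freely via the multiplier estimate \eqref{250411.1} (or, after rescaling, via Proposition \ref{1206.1}). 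The polynomial decay in (ii), combined with the dyadic restriction $t \geq c_{3} d^{3/2}$, makes the resulting geometric series summable with a $d$-independent bound, and the desired $\ell^{2}_{t}$ estimate follows.

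The principal obstacle I anticipate is bookkeeping: ensuring that the constants produced when adding up the contributions from the different Farey fractions, the asymptotic remainder, and the square-function reduction are genuinely independent of $d$. The noncommutative square function only gives an upper bound via an $\ell_{2}^{cr}$ norm, so some care is required to arrange the decomposition so that each piece is genuinely square-summable with a dimension-free constant, rather than merely $p$-summable with a constant that degenerates in $d$.
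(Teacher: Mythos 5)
Your proposal is broadly in the right spirit (circle-method decomposition of $\mathfrak{m}_t$, with $a_t$ as the $q=1$ main term, Farey fractions for $q\geq 2$, and an asymptotic remainder), but it contains a genuine gap at the central step, and the bookkeeping you flag as a concern is in fact where the argument breaks.

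The unified square-function reduction you propose — deducing the maximal bound from the pointwise $\ell^2_t$ estimate $\sum_{t\in\mathcal{D}_{c_3,\infty}}|\mathfrak{m}_t(\xi)-a_t(\xi)|^2\leq C$ — cannot work, because that estimate is false. Near a rational point $\xi\approx p/q$ with $q\geq 2$, both $\mathfrak{m}_t(\xi)$ and the Farey contribution $\mathfrak{a}_{t,p/q}(\xi)$ are of size $\simeq |G(p/q;\cdot)|$, which has no decay in $t$, so the $\ell^2_t$ sum over the infinite dyadic range diverges. Your proposal implicitly acknowledges this by then peeling off the Farey contributions and treating them separately, but the tools you cite do not close the gap: the estimate \eqref{250411.1} is a fixed-$t$ $L_2$ bound (not a maximal bound over $\mathcal{D}_{c_3,\infty}$), while Proposition \ref{1206.1} does give a maximal bound but at the cost of a factor $3^d$, which is ruinous unless compensated. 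You assert the Farey contributions ``are controlled dimension-freely'' but do not explain how to defeat the $3^d$.

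This is precisely the content the paper adds. The decomposition (Proposition \ref{1202.1}) is $\mathfrak{m}_t=\sum_{q<n_0}\mathfrak{a}_{t,p/q}+\mathfrak{b}_{t,n_0}+E_{t,n_0}$ with an adjustable threshold $n_0$. For each fixed $p/q$ with $q<n_0$, Lemma \ref{0109.6} obtains a dimension-free maximal bound \emph{not} via transference or a square function but by a Gauss-sum argument: decompose $f$ over the cosets $T_w$, pull out $|G(p/q;w)|$ via Lemma \ref{0109.1}, apply Lemma \ref{1209.1} to each piece, and then recombine using Cauchy–Schwarz with $\sum_w|G(p/q;w)|^2=1$. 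The tail $\mathfrak{b}_{t,n_0}$, which collects all Farey fractions with $q\geq n_0$, is the one place the transference principle (Proposition \ref{1206.1}) and the multiplier estimate \eqref{250411.1} are used — precisely because the Gauss-sum decay $q^{-d/2}$ then beats the factors $D_1^d D_2^d$ (including the $3^d$) once $n_0$ is taken larger than $(D_1 D_2)^{10}$, making the sum $\sum_{q\geq n_0} q^{-d/2+1}(D_1 D_2)^d$ converge with a $d$-independent bound. Only the remainder $E_{t,n_0}$, which does decay polynomially in $t$, is handled by the square-function / Plancherel argument you propose, and there the restriction $t\geq c_3 d^{3/2}$ together with $d\geq 16$ is exactly what makes the resulting geometric series in $t$ dimension-free. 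Your proposal collapses these three qualitatively different mechanisms into one, and in doing so misses both the Gauss-sum Cauchy–Schwarz step for small $q$ and the threshold choice $n_0\simeq (D_1D_2)^{10}$ that neutralizes the transference loss.
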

	%
For this proof, we  require    expansions for the multiplier $\mathfrak{m}_{t}$ given in (\ref{250423.1}).  For $q\geq 1$, $(p,q)=1$, $t>0$ and $\xi\in\mathbb{T}^d$, we denote
\begin{align*} 
	\mathfrak{a}_{t,{p}/{q}}(\xi)=\frac{\lambda^{{d}/{2}-1}}{2|\mathbb{S}_{t}\cap\mathbb{Z}^d|}e^{-2\pi \mathrm{i}\lambda{p}/{q}}G( {p}/{q};\llbracket q\xi \rrbracket )\widehat{\sigma}\big(t({\llbracket q\xi \rrbracket}/{q}-\xi)\big),
\end{align*}
where \begin{align*} 
	G({p}/{q};x) =q^{-d}\sum_{n\in \mathbb{N}_{q}^{d}}e^{2\pi \mathrm{i}(|n|^2{p}/{q}+ \langle x,n \rangle/{q})},\quad x\in\mathbb{Z}^d,
\end{align*}
is the  $d$-dimensional Gaussian sum.   The estimates for  $G({p}/{q};x)$  are  available (see \cite[Lemma 3.1]{mirek2023dimension}): 
   \begin{align}\label{1205.10}
	|G(p/q;x)|\leq (2/q)^{d/2},~~ x\in\mathbb{Z}^d, \quad\mbox{and}\quad \sum_{n\in \mathbb{N}_{q}^{d}}|G({p}/{q};n)|^2=1. 
	\end{align}
Given $N= \lfloor t \rfloor$ and consider the corresponding sequence
\begin{align*}
	H_{N}=\left\{{p}/{q}\in \mathbb{Q}:~0\leq p\leq q\leq N,~(p,q)=1\right\}.
\end{align*}
 For $1\leq n\leq N+1$, $t>0$ and $\xi\in\mathbb{T}^d$, define
\begin{align}\label{2485.15}
	\mathfrak{b}_{t,n}(\xi)=\frac{\lambda^{{d}/{2}-1}}{2|\mathbb{S}_{t}\cap\mathbb{Z}^d|}\sum_{{p}/{q}\in H_{N}\atop q\geq n}\sum_{x\in\mathbb{Z}^d}e^{-2\pi \mathrm{i}\lambda {p}/{q}}G ( {p}/{q};x)\Theta(q\xi-x)\widehat{\sigma}\big(t({x}/{q}-\xi)\big),
\end{align}
where 
	\begin{align}\label{250415.1}
	\Theta(\xi)=\prod_{j=1}^{d}\vartheta(\xi_{j}), \quad \xi=(\xi_{1},\cdots,\xi_{d})\in\mathbb{R}^d, 
\end{align}
and $\vartheta  \in C_{c}^{\infty}\big((- {1}/{4}, {1}/{4})\big)$ is a smooth, compactly supported function such that $\vartheta\equiv1 $  on $[-{1}/{8}, {1}/{8}]$ and    $\|\vartheta\|_{L_{\infty}(\mathbb{R})}\leq 1$.

\begin{proposition}[\cite{mirek2023dimension}]\label{1202.1}
	There exists a constant $C>0$ such that for all $d\geq 16$ and $t>0$ satisfying $t\geq Cd^{3/2}$ and all integers $1\leq n\leq N+1 $, 
	we have 
	\begin{align*}
		\mathfrak{m}_{t}(\xi)=\sum_{ {p}/{q}\in H_{N}\atop q<n}\mathfrak{a}_{t,{p}/{q}}(\xi)+	\mathfrak{b}_{t,n}(\xi)+E_{t,n}(\xi),
	\end{align*}
	where the error term $E_{t,n}(\xi)$ satisfies 
	\begin{align*}
		|E_{t,n}(\xi)|\lesssim^{d}  \frac{d^{{3d}/{4}}}{\lambda^{{d}/{4}\,-1}}, 
	\end{align*}
	uniformly for $\xi\in\mathbb{T}^d$, $1\leq n\leq N+1$, $d\geq 16$ and $t\geq Cd^{3/2}$.
\end{proposition}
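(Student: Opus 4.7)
The plan is to use the Hardy--Littlewood circle method adapted to the high-dimensional setting, essentially following the strategy of \cite{Magyar} with the quantitative refinements of \cite{mirek2023dimension}. The starting point is the encoding
\begin{align*}
\sum_{x\in \mathbb{S}_{t}\cap \mathbb{Z}^d} e^{2\pi \mathrm{i}\langle x,\xi\rangle}=\int_{0}^{1}e^{-2\pi \mathrm{i}\lambda\theta}\Big(\sum_{x\in \mathbb{Z}^d}e^{2\pi \mathrm{i}(\theta|x|^2+\langle x,\xi\rangle)}\Big)d\theta,
\end{align*}
where the inner sum factorizes coordinate-wise into one-dimensional quadratic Gauss-type sums. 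I would then apply the Farey dissection of $[0,1]$ corresponding to the sequence $H_{N}$ with $N=\lfloor t\rfloor$, introducing major arcs $\{\theta:|\theta-p/q|\leq 1/(qN)\}$ and their complement.

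On each major arc I would perform the substitution $\theta=p/q+\eta$ and split each coordinate variable modulo $q$. The $p/q$ contribution assembles into the Gauss sum $G(p/q;\cdot)$, while the $\eta$ integral, after Poisson summation on $\mathbb{Z}^d/q\mathbb{Z}^d$ and a stationary phase analysis, is asymptotic to the Fourier transform $\widehat{\sigma}\big(t(x/q-\xi)\big)$ of the continuous surface measure. Normalizing by $|\mathbb{S}_{t}\cap\mathbb{Z}^d|\sim c_{d}\lambda^{d/2-1}$ produces exactly the prefactor $\lambda^{d/2-1}/(2|\mathbb{S}_{t}\cap\mathbb{Z}^d|)$ in the statement. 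Inserting the smooth cut-off $\Theta$ built from $\vartheta$ (which equals $1$ on the support of the stationary-phase contribution because of the rapid decay of $\widehat{\sigma}$ away from $x/q$) allows one to rewrite the major-arc sum as $\sum_{p/q\in H_{N}}\mathfrak{a}_{t,p/q}$ modulo a controlled discrepancy absorbed in the error. Partitioning $H_{N}$ at denominator $n$ then separates the explicit part $\sum_{q<n}\mathfrak{a}_{t,p/q}$ from the tail $\mathfrak{b}_{t,n}$ as displayed in \eqref{2485.15}.

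The minor-arc contribution is what furnishes $E_{t,n}$. Here I would invoke Weyl-type bounds for the one-dimensional quadratic sums: whenever $\theta$ lies in a minor arc, Dirichlet's approximation theorem yields a representation $|\theta-p/q|\leq 1/(qN)$ with $N<q\leq t$, from which each coordinate sum gains a factor like $\sqrt{N/q}+\sqrt{q/N}$. Raising to the $d$-th power, integrating against $e^{-2\pi \mathrm{i}\lambda\theta}$ and normalizing by $|\mathbb{S}_{t}\cap\mathbb{Z}^d|^{-1}\sim \lambda^{-(d/2-1)}$ yields the announced bound $d^{3d/4}\lambda^{-(d/4-1)}$, where the condition $t\geq Cd^{3/2}$ is precisely what makes the combinatorial factors absorb into the $\lesssim^{d}$ constant. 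The estimate $|G(p/q;x)|\leq (2/q)^{d/2}$ in \eqref{1205.10} is used to keep the major-arc trail for $q\geq n$ summable and to verify that the pieces $\mathfrak{a}_{t,p/q}$ and $\mathfrak{b}_{t,n}$ are placed on the correct side of the decomposition.

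The main obstacle is tracking the explicit dimension dependence at every step: the classical Magyar--Stein--Wainger treatment is dimension-agnostic, so one has to be careful with the constants in Poisson summation, in the stationary-phase expansion of the integral over $\eta$, and especially in the Weyl bound of the minor arcs, since only an error of order $d^{3d/4}\lambda^{-(d/4-1)}$ (and not a worse power of $d$) can be tolerated. The hypothesis $d\geq 16$ and the lower bound $t\geq Cd^{3/2}$ are precisely calibrated to ensure that every contribution from stationary phase and Gauss sums stays within these quantitative budgets.
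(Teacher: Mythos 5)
The paper does not prove this proposition itself; it is stated with the citation \cite{mirek2023dimension} and imported wholesale from Mirek--Szarek--Wr\'obel. So there is no internal proof to compare against, and the right question is whether your sketch is a faithful reconstruction of the cited source. Your overall plan --- encode $\mathbb{S}_t\cap\mathbb{Z}^d$ via the circle method, Farey-dissect $[0,1]$ at height $N=\lfloor t\rfloor$, extract Gauss sums and the archimedean $\widehat\sigma$ factor on the major arcs, insert the compactly supported cut-off $\Theta$, split the major arcs at denominator $n$, and stuff the minor arcs plus all approximation discrepancies into $E_{t,n}$ --- is indeed the strategy of Magyar--Stein--Wainger as refined in \cite{mirek2023dimension}, so the approach matches.

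Two imprecisions are worth flagging. First, the opening identity as you wrote it,
\begin{align*}
\sum_{x\in \mathbb{S}_{t}\cap \mathbb{Z}^d} e^{2\pi \mathrm{i}\langle x,\xi\rangle}=\int_{0}^{1}e^{-2\pi \mathrm{i}\lambda\theta}\sum_{x\in \mathbb{Z}^d}e^{2\pi \mathrm{i}(\theta|x|^2+\langle x,\xi\rangle)}\,d\theta,
\end{align*}
has a divergent inner sum: the standard fix (used in both Magyar--Stein--Wainger and the cited reference) is to move into the upper half-plane and work with $z=\varepsilon+\mathrm{i}\theta$ for $\varepsilon\sim 1/\lambda$, so the inner object is a genuine Jacobi theta-type product, and only then does the Farey dissection make sense. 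Second, the heart of the proposition --- obtaining the specific bound $|E_{t,n}|\lesssim^{d}d^{3d/4}\lambda^{-(d/4-1)}$ --- is not actually carried out in your sketch; "raising the one-dimensional Weyl gain to the $d$-th power and normalizing" is the right slogan, but the $d^{3d/4}$ factor and the exact exponent $d/4-1$ come from a careful dimension-dependent bookkeeping of the Weyl/Gauss-sum estimates, the regularization parameter $\varepsilon\sim1/\lambda$, the number of Farey fractions, and the approximation of the singular integral by $\widehat\sigma$. The hypothesis $t\gtrsim d^{3/2}$ is where these trades close; you identify it as the calibration point, which is correct, but the sketch should not be read as producing the constant.
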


\begin{lemma}\label{0109.6}
	There exists a constant $C>0$  such that for all integers $1\leq p\leq q$ with $(p,q)=1$ and $d\geq 16$, we have
	\begin{align*}
		\|\sup_{t\in  \mathcal{D}_{C,\infty}} \mathcal{F}^{-1}(\mathfrak{a}_{t,{p}/{q}}\widehat{f}\,)\|_{L_{2}(\mathcal{N})}\lesssim\|f\|_{L_{2}(\mathcal{N})},
	\end{align*}
	where the implicit constant is independent of $p,q$ and dimension $d$.
\end{lemma}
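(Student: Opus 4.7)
My plan is to exploit the product factorisation of $\mathfrak{a}_{t,p/q}$ over the $q^d$-box partition of $\mathbb{T}^d$ so as to reduce, on each box, to Lemma \ref{1209.1} applied to a modulated Fourier piece of $f$, and then to reassemble the pieces via the orthogonality identity $\sum_{y}|G(p/q;y)|^2 = 1$ from \eqref{1205.10}.

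First, I would partition $\mathbb{T}^d$ into the disjoint boxes $B_y := y/q + Q/q$ for $y$ ranging over $(\mathbb{Z}/q\mathbb{Z})^d$. Since $\llbracket q\xi\rrbracket = y$ for $\xi \in B_y$, the multiplier factorises as
\[
\mathfrak{a}_{t,p/q}(\xi) = \kappa(t,p/q)\,G(p/q;y)\,\widehat{\sigma}(t(y/q-\xi)), \qquad \xi\in B_y,
\]
with $\kappa(t,p/q) := \lambda^{d/2-1}e^{-2\pi \mathrm{i}\lambda p/q}/(2|\mathbb{S}_t\cap\mathbb{Z}^d|)$. The Hardy--Littlewood circle method asymptotic for $|\mathbb{S}_t\cap\mathbb{Z}^d|$ (valid for $d\geq 5$ and $t\in\mathcal{D}_{C,\infty}$ with $C$ large enough), combined with the uniform lower bound on the singular series, would yield $|\kappa(t,p/q)\sigma(\mathbb{S})| \lesssim 1$ uniformly in $d$, $t$, and $p/q$.

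Next, I would decompose $f = \sum_y f_y$ with $\widehat{f_y} = \widehat{f}\chi_{B_y}$, and set $g_y(n) := e^{-2\pi \mathrm{i} \langle n, y/q\rangle}f_y(n)$ so that $\widehat{g_y}$ is supported in $Q/q \subset Q$. Using evenness of $\widehat{\sigma}$ together with the identity $\widehat{\mu}(t\eta) = a_t(\eta)$ for $\eta \in Q$ (with $a_t$ as in Lemma \ref{1209.1}), a direct Fourier-inversion computation gives
\[
\mathcal{F}^{-1}(\mathfrak{a}_{t,p/q}\widehat{f_y})(n) = \kappa(t,p/q)\sigma(\mathbb{S})G(p/q;y)\,e^{2\pi \mathrm{i} \langle n, y/q\rangle}\,\mathcal{F}^{-1}(a_t\widehat{g_y})(n).
\]
Lemma \ref{1209.1} applied to $g_y$ gives $\|\sup_t\mathcal{F}^{-1}(a_t\widehat{g_y})\|_{L_2(\mathcal{N})} \lesssim \|g_y\|_{L_2(\mathcal{N})} = \|f_y\|_{L_2(\mathcal{N})}$; Lemma \ref{0109.1}(ii) absorbs the $t$-dependent scalar $\kappa(t,p/q)\sigma(\mathbb{S})G(p/q;y)$ at cost $\lesssim |G(p/q;y)|$, and Lemma \ref{0109.1}(i) absorbs the modulation $e^{2\pi \mathrm{i}\langle n, y/q\rangle}$, yielding for each $y$
\[
\|\sup_{t\in\mathcal{D}_{C,\infty}}\mathcal{F}^{-1}(\mathfrak{a}_{t,p/q}\widehat{f_y})\|_{L_2(\mathcal{N})} \lesssim |G(p/q;y)|\,\|f_y\|_{L_2(\mathcal{N})}.
\]

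Finally, summing over $y$ via the Banach-space triangle inequality for the $L_2(\mathcal{N};\ell_\infty)$-norm, followed by Cauchy--Schwarz together with $\sum_y|G(p/q;y)|^2 = 1$ from \eqref{1205.10} and the Plancherel identity $\sum_y \|f_y\|_{L_2(\mathcal{N})}^2 = \|f\|_{L_2(\mathcal{N})}^2$, I would conclude
\[
\Big\|\sup_{t\in\mathcal{D}_{C,\infty}}\mathcal{F}^{-1}(\mathfrak{a}_{t,p/q}\widehat{f})\Big\|_{L_2(\mathcal{N})} \leq \sum_y \|\sup_t \mathcal{F}^{-1}(\mathfrak{a}_{t,p/q}\widehat{f_y})\|_{L_2(\mathcal{N})} \lesssim \|f\|_{L_2(\mathcal{N})}.
\]
The technical heart of the argument is verifying the uniform scalar bound $|\kappa(t,p/q)\sigma(\mathbb{S})|\lesssim 1$, which rests on sharp Hardy--Littlewood estimates for the number of integer points on the sphere $|x|=t$; the remaining steps (box partition, modulation, Cauchy--Schwarz) are natural applications of the tools developed in Sections \ref{1222.1} and \ref{250414.1}.
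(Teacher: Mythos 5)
Your proposal is correct and follows essentially the same route as the paper's proof: partition frequency space into the $q^d$ boxes indexed by residues mod $q$, decompose $f$ accordingly, use the uniform bound $\lambda^{d/2-1}/|\mathbb{S}_t\cap\mathbb{Z}^d|\simeq 1/\sigma(\mathbb{S})$ together with Lemma \ref{0109.1} to peel off the scalar $G(p/q;y)$ and the modulation, apply Lemma \ref{1209.1} on each piece, and reassemble by Cauchy--Schwarz with $\sum_y|G(p/q;y)|^2=1$ and Plancherel. The only cosmetic differences are that the paper cites the precise estimate \cite[(3.19)]{mirek2023dimension} for the prefactor rather than rederiving it from the circle method, and that your $\kappa(t,p/q)$ notation collides with the paper's $\kappa(d,\lambda)$, but this does not affect the argument.
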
 
\begin{proof}
	Fix $q\in \mathbb{N}$. For each  $w \in \mathbb{N}^{d}_{q}$, we denote   $T_{w} =\left\{ \xi\in\mathbb{T}^d: \llbracket q\xi \rrbracket\equiv w\,(\mbox{mod } q)\right\}$. Then any  $f\in L_{2}(\mathcal{N})$ admits the  decomposition:
	\begin{align*} 
		f=\sum_{w\in \mathbb{N}^{d}_{q}}f_{w},\mbox{ \quad with \quad }\widehat{f_{w}}=\widehat{f}\cdot\chi_{T_{w}},
	\end{align*}
where the functions $\widehat{f_{w}}$ have pairwise disjoint supports.   
By the triangle inequality,  (\ref{250413.1}), and the following estimate (see \cite[(3.19)]{mirek2023dimension}):
\begin{align}\label{1205.7}
	\frac{\lambda^{{d}/{2}-1}}{|\mathbb{S}_{t}\cap\mathbb{Z}^d|}\simeq\frac{1}{\sigma(\mathbb{S} )},\quad t\in  \mathcal{D}_{C,\infty},\,\, d\geq16,
\end{align}
 one has 
	\begin{align}\label{250328.1}
		 \|&\sup_{t\in \mathcal{D}_{C,\infty}} \mathcal{F}^{-1}(\mathfrak{a}_{t,{p}/{q}}\widehat{f}\,)\|_{L_{2}(\mathcal{N})}\nonumber\\
		\leq& \sum_{w\in \mathbb{N}^{d}_{q}}  \big\|\sup_{t\in  \mathcal{D}_{C,\infty}} \mathcal{F}^{-1}\big (e^{-2 \pi \mathrm{i}\lambda {p}/{q}}G ({p}/{q};\llbracket q\xi \rrbracket )\widehat{\mu}\big(t({\llbracket q\xi \rrbracket}/{q}-\xi)\big)\widehat{f_{w}}(\xi)\big ) \big\|_{L_{2}(\mathcal{N})}.
	\end{align}
Lemma \ref{0109.1} implies  that 
	\begin{align}\label{250328.2}
	 \mbox{RHS}(\ref{250328.1})\leq\sum_{w\in \mathbb{N}^{d}_{q}} \big|G ({p}/{q};w ) \big|\, \big\|\sup_{t\in  \mathcal{D}_{C,\infty}} \mathcal{F}^{-1}\big (\widehat{\mu}\big (t ({\llbracket q\xi \rrbracket}/{q}-\xi )\big )\widehat{f_{w}}(\xi)\big )\big\|_{L_{2}(\mathcal{N})}.
	\end{align}  
Recalling for $\omega\in \mathbb{N}^{d}_{q}$,  it was shown in \cite[(5.6)]{mirek2023dimension}  that 
\begin{align*}  
	\xi-{\llbracket q\xi \rrbracket}/{q}=\xi-{\omega}/{q}-\llbracket\xi-{\omega}/{q}\rrbracket,\quad\xi\in T_{\omega}.
\end{align*} 
Thus,
 \begin{align}\label{1205.8}
 	\big \|\sup_{t\in  \mathcal{D}_{C,\infty}} \mathcal{F}^{-1}\big (\widehat{\mu}&\big (t ( {\llbracket q\xi \rrbracket}/{q}-\xi )\big )\widehat{f_{w}}(\xi)\big )\big \|_{L_{2}(\mathcal{N})}\nonumber\\
 	=
	&\big \|\sup_{t\in  \mathcal{D}_{C,\infty}} \mathcal{F}^{-1}\big (\widehat{\mu} \big (t (\xi- {w}/{q}-  \llbracket\xi-{w}/{q}  \rrbracket)\big )\widehat{f_{w}}(\xi)\big )\big \|_{L_{2}(\mathcal{N})}\nonumber\\ 
=&\big \|\sup_{t\in \mathcal{D}_{C,\infty}} \mathcal{F}^{-1}\big (a_{t}(\xi)\widehat{f_{w}} (\xi+{w}/{q})\big )\big \|_{L_{2}(\mathcal{N})}\lesssim \|f_{w}\|_{L_{2}(\mathcal{N})},
\end{align}
 where the last inequality follows from Lemma \ref{1209.1}.
Finally, by the Cauchy-Schwarz inequality together with (\ref{1205.10}), (\ref{250328.1})-(\ref{1205.8}),  and the Plancherel theorem, one has
	\begin{align*}
		\|\sup_{t\in \mathcal{D}_{C,\infty}} \mathcal{F}^{-1}(\mathfrak{a}_{t,{p}/{q}}\widehat{f}\,)\|_{L_{2}(\mathcal{N})}	\lesssim \bigg (\sum_{w\in \mathbb{N}^{d}_{q}}\big |G ({p}/{q};w )\big |^2\bigg )^{{1}/{2}} \bigg(\sum_{w\in \mathbb{N}^{d}_{q}} \|f_{w}\|_{L_{2}(\mathcal{N})}^2\bigg)^{{1}/{2}}
		= \|f\|_{L_{2}(\mathcal{N})}.
	\end{align*}
\end{proof}
We now turn to the second main  ingredient to complete the proof of  Theorem \ref{0115.1}, which   bases on the  noncommutative transference principle, i.e., Proposition \ref{1206.1}.  

\begin{lemma}\label{1206.7}
	There exist  constants $C>0$ and $n_{0}\in \mathbb{N}$ such that for all $d\geq 16$, we have 
	\begin{align*}
		\|\sup_{t\in \mathcal{D}_{C,\infty}} \mathcal{F}^{-1}(\mathfrak{b}_{t,n_{0}}  \widehat{f}\,)\|_{L_{2}(\mathcal{N})}\lesssim \|f\|_{L_{2}(\mathcal{N})},
	\end{align*}
	where 	  the implicit constant is independent of  the dimension $d$.
\end{lemma}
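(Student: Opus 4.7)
The strategy is to decompose $\mathfrak{b}_{t,n_0}$ by Farey denominator, reduce each piece via the noncommutative sampling principle, and combine the contributions using Gauss sum decay. The central point is that the exponential-in-dimension decay from \eqref{1205.10} can be arranged to beat the $3^d$ transference factor from Lemma~\ref{250428.3}. After absorbing the normalization using \eqref{1205.7} together with $\widehat{\sigma}=\sigma(\mathbb{S})\widehat{\mu}$, I split $\mathfrak{b}_{t,n_0}=\sum_{q=n_0}^{N+1}\mathfrak{b}^q_t$ by denominator. Exploiting the $q\mathbb{Z}^d$-periodicity of $G(p/q;\cdot)$ in its second variable (writing $x=qm+w$ with $w\in\mathbb{N}^d_q$),
\[
\mathfrak{b}^q_t(\xi)\simeq\sum_{w\in\mathbb{N}^d_q}\gamma^{q,t}_w\,H^{q,t}_w(\xi),\qquad H^{q,t}_w(\xi)=\sum_{m\in\mathbb{Z}^d}F^{q,t}(\xi-w/q-m),
\]
where $F^{q,t}(\eta):=\Theta(q\eta)\widehat{\mu}(-t\eta)$ is supported in $q^{-1}Q$ and $\gamma^{q,t}_w:=\sum_{(p,q)=1}e^{-2\pi\mathrm{i}\lambda p/q}G(p/q;w)$. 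Crucially, since any two distinct $w,w'\in\mathbb{N}^d_q$ satisfy $(w-w')/q\not\in\mathbb{Z}^d$, the Fourier supports of $H^{q,t}_w$ for different $w$ are pairwise disjoint on $\mathbb{T}^d$.

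For each $w$, I modulate by $M_{w/q}f(x):=e^{2\pi\mathrm{i}\langle w/q,x\rangle}f(x)$, which is an $L_2(\mathcal{N})$-isometry intertwining $H^{q,t}_w$ with the reference multiplier $\tilde H^{q,t}(\xi):=\sum_m F^{q,t}(\xi-m)$; the invariance of the sup-norm under modulation is exactly Lemma~\ref{0109.1}(i). The multiplier $\tilde H^{q,t}$ is precisely the Fourier symbol of the discrete operator obtained, via Lemma~\ref{250428.3}, from the continuous convolution on $\mathbb{R}^d$ with symbol $F^{q,t}$. The continuous maximal is bounded dimension-freely in $L_2(L_\infty(\mathbb{R}^d)\overline{\otimes}\mathcal{M})$: multiplication by $\Theta(q\cdot)$ has $L_2$-operator norm bounded by $1$ (Plancherel), and the remaining $\sup_t\mathcal{F}^{-1}(\widehat{\mu}(-t\cdot)\widehat{g})$ is bounded by Lemma~\ref{1209.2}. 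Transferring via Lemma~\ref{250428.3} and undoing the modulation yields the per-$w$ estimate
\[
\bigl\|\sup_{t\in\mathcal{D}_{C,\infty}}\mathcal{F}^{-1}(H^{q,t}_w\widehat{f})\bigr\|_{L_2(\mathcal{N})}\lesssim 3^d\|f\|_{L_2(\mathcal{N})}.
\]

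To assemble the final bound, decompose $f=\sum_w f_w$ orthogonally via $\widehat{f_w}=\widehat{f}\chi_{\mathrm{supp}(H^{q,t}_w)}$, so that $\|f\|_{L_2(\mathcal{N})}^2=\sum_w\|f_w\|_{L_2(\mathcal{N})}^2$. Triangle inequality in $w$, the per-$w$ bound applied to $f_w$, Lemma~\ref{0109.1}(ii), and Cauchy--Schwarz give
\[
\bigl\|\sup_t\mathcal{F}^{-1}(\mathfrak{b}^q_t\widehat{f})\bigr\|_{L_2(\mathcal{N})}\lesssim 3^d\Bigl(\sum_w\sup_t|\gamma^{q,t}_w|^2\Bigr)^{1/2}\|f\|_{L_2(\mathcal{N})}.
\]
The pointwise Gauss sum bound $|G(p/q;w)|\leq(2/q)^{d/2}$ from \eqref{1205.10} supplies exponential-in-$d$ decay in $q$; summing the resulting estimate over $q\geq n_0$, where $n_0$ is chosen as a sufficiently large absolute constant, the cumulative Gauss sum decay dominates the $3^d$ transference factor and produces a dimension-free bound.

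The main obstacle is precisely this quantitative interplay. A naive triangle inequality over $w\in\mathbb{N}^d_q$ would cost a prohibitive factor $q^d$, so Fourier-support disjointness is indispensable for arranging the orthogonal decomposition $f=\sum_w f_w$. Moreover, the pointwise Gauss bound $|G(p/q;w)|\leq(2/q)^{d/2}$ must be carefully balanced with the $L^2$-orthogonality $\sum_w|G(p/q;w)|^2=1$ from \eqref{1205.10}, so that after summation over both $w\in\mathbb{N}^d_q$ and $q\geq n_0$ the resulting geometric decay outweighs the $3^d$ transference loss uniformly in $d$. This bookkeeping is the delicate point of the proof.
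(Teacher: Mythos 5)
The decomposition $\mathfrak{b}^q_t(\xi)\simeq\sum_{w\in\mathbb{N}^d_q}\gamma^{q,t}_wH^{q,t}_w(\xi)$ is correct, and the per-$w$ transference bound $\|\sup_t\mathcal{F}^{-1}(H^{q,t}_w\widehat{f_w})\|_{L_2(\mathcal{N})}\lesssim 3^d\|f_w\|_{L_2(\mathcal{N})}$ is also fine. The gap is in the assembly step: the quantity $3^d\bigl(\sum_w\sup_t|\gamma^{q,t}_w|^2\bigr)^{1/2}$ does \emph{not} decay in $q$, so the sum over $q\geq n_0$ cannot be made to converge, let alone dimension-freely. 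Using the pointwise bound $|G(p/q;w)|\le(2/q)^{d/2}$ and the triangle inequality over the $\varphi(q)\leq q$ admissible numerators $p$, one gets $\sup_t|\gamma^{q,t}_w|\le\varphi(q)(2/q)^{d/2}$, hence $\sum_{w\in\mathbb{N}^d_q}\sup_t|\gamma^{q,t}_w|^2\le q^d\cdot\varphi(q)^2(2/q)^d=2^d\varphi(q)^2$; alternatively, Cauchy--Schwarz over $p$ together with $\sum_w|G(p/q;w)|^2=1$ gives $\sum_w\sup_t|\gamma^{q,t}_w|^2\le\varphi(q)^2\le q^2$. Either way the $\ell^2$-in-$w$ quantity is $\gtrsim 1$ (indeed $\approx\varphi(q)$), so your per-$q$ bound is $\gtrsim 3^dq$: it grows in $q$ and in $d$, and the asserted ``cumulative Gauss sum decay'' beating the $3^d$ factor is simply not there. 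The root cause is that by decomposing the \emph{input} $f=\sum_w f_w$ and treating the Gauss sums as scalar weights, you land on the $\ell^2$-normalization $\sum_w|G(p/q;w)|^2=1$, which exactly cancels the $q^{-d/2}$ smallness; the $\ell^\infty$ smallness of $G$ is then invisible.

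The paper's proof avoids this pitfall by keeping the Gauss sums inside an \emph{operator} rather than an $\ell^2$-in-$w$ coefficient sequence. It factors the multiplier as $U^q_t\circ V^{p/q}$ (using $\Theta\Phi=\Theta$), where $V^{p/q}$ is a fixed, $t$-independent Fourier multiplier built from $G(p/q;\cdot)$ and the bump $\Phi_{(q)}$. Its operator norm is controlled via the discrete multiplier estimate \eqref{250411.1} by $A^d\sup_x|G(p/q;x)|\le(\sqrt{2}A)^dq^{-d/2}$, giving genuine exponential-in-$d$ decay in $q$. The maximal operator $U^q_t$ is then handled once per $q$ through Proposition~\ref{1206.1} (the $q^{-1}Q$-supported refinement of the sampling principle, rather than Lemma~\ref{250428.3} applied $q^d$ times after modulation), yielding $\|\sup_tU^q_tf\|\le 3^d\|f\|$. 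Composing and summing over the $\le q$ choices of $p$ gives a per-$q$ bound $3^d(\sqrt{2}A)^dq^{1-d/2}$, which sums over $q\geq n_0$ to a dimension-free constant once $n_0$ is an absolute constant chosen larger than $(3\sqrt{2}A)^{10}$ (say). If you want to salvage your outline, do not split by $w$: keep the $p$-sum outside (with the unimodular phase $e^{-2\pi\mathrm{i}\lambda p/q}$ removed via Lemma~\ref{0109.1}(ii)), recognize $\sum_wG(p/q;w)H^{q,t}_w$ as the multiplier of $U^q_tV^{p/q}$, and then bound the composition.
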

\begin{proof}
For this proof, we first give some necessary notations.   Let $\phi\in C_{c}^{\infty}\big((-{1}/{2},\,{1}/{2})\big)$ be a smooth, compactly supported function such that $\phi\equiv1 $  on $[-{1}/{4},{1}/{4}]$ and    $\|\phi\|_{L_{\infty}(\mathbb{R})}\leq 1$; and denote
\begin{align*}
	\Phi(\xi)=\prod_{j=1}^{d}\phi(\xi_{j}),\quad \xi=(\xi_{1},\cdots,\xi_{d})\in\mathbb{R}^d.
\end{align*}
Set 
\begin{align*}
	H_{\infty}=\bigcup_{N\in\mathbb{N}}H_{N}=\left\{ {p}/{q}\in \mathbb{Q}:~1\leq p\leq q,~(p,q)=1\right\}.
\end{align*}
  For every $ {p}/{q}\in H_{\infty}$, define two auxiliary     operators  $U^{q}_{t},  V^{ {p}/{q}} :L_2(\mathcal{N})\rightarrow L_2(\mathcal{N})$ by setting
\begin{align*} 
	U^{q}_{t}f =\mathcal{F}^{-1}\bigg (\sum_{x\in\mathbb{Z}^d}\Theta_{(q)} (\xi-{x}/{q} )\widehat{\mu}\big (t (\xi-{x}/{q} )\big )\widehat{f}\,\bigg),\quad f\in L_2(\mathcal{N}),
\end{align*}	 
 	and    
\begin{align*}
	V^{{p}/{q}}f =\mathcal{F}^{-1}\bigg(\sum_{x\in\mathbb{Z}^d}G({p}/{q};x)\Phi_{(q)}(\xi-{x}/{q})\widehat{f}\,\bigg),\quad f\in L_2(\mathcal{N}),
\end{align*}
where $\Theta$ is given in  (\ref{250415.1}) and 
\begin{align*}
	\Theta_{(q)}(\xi)=\Theta(q\xi), \quad   \Phi_{(q)}(\xi)=\Phi(q\xi).
\end{align*}
Since $t\in  \mathcal{D}_{C,\infty}$, we have $t\geq Cd^{{3}/{2}}$ and $N=\lfloor t\rfloor\geq Cd^{{3}/{2}}-1$. Recalling $	\mathfrak{b}_{t,n_{0}}$ given in (\ref{2485.15}),   using Lemma \ref{0109.1}, (\ref{1205.7}), and the fact that  $\Theta \Phi = \Theta$,  we obtain
\begin{align*}
	&\|\sup_{t\in \mathcal{D}_{C,\infty}} \mathcal{F}^{-1} (	\mathfrak{b}_{t,n_{0}} \widehat{f}\,)\|_{L_{2}(\mathcal{{N}})}\\
	\lesssim& \sum_{ {p}/{q}\in H_{N}\atop q\geq n_{0}}\bigg \|\sup_{t\in\mathcal{D}_{C,\infty}} \mathcal{F}^{-1}\bigg (\sum_{x\in \mathbb{Z}^d} G ( {p}/{q};x )\Theta_{(q)}  (\xi- {x}/{q} )\widehat{\mu}\big (t ( {x}/{q}-\xi )\big )\widehat{f}\,\bigg )\bigg \|_{L_{2}(\mathcal{{N}})} \\
	\leq& \sum_{ {p}/{q}\in H_{\infty} \atop q\geq n_{0}}\|\sup_{t\in \mathcal{D}_{C,\infty}} U^{q}_{t}V^{ {p}/{q}}f\|_{L_{2}(\mathcal{{N}})},
\end{align*}
uniformly for $1 \leq n_{0} \leq C d^{3/2}$. The choice of $n_{0}$ will be specified later.

Once we establish that there exist  constants $D_{1}$, $D_{2}>0$ such that 
\begin{align}\label{1204.1} 
	\|\sup_{t\in \mathcal{D}_{C,\infty}}U^{q}_{t}f\|_{L_{2}(\mathcal{{N}})}\leq D_{1}^d\,\|f\|_{L_{2}(\mathcal{{N}})},
\end{align}
and
\begin{align}\label{1204.4}
	\|V^{{p}/{q}}f\|_{L_{2}(\mathcal{{N}})}\leq D_{2}^d\,q^{-{d}/{2}}\|f\|_{L_{2}(\mathcal{{N}})},
\end{align}
  uniformly for ${p}/{q}\in H_{\infty}$ and $d\geq 16$, this lemma will be   proved. Indeed,  taking $n_{0}=\lfloor (D_{1}D_{1})^{10}\rfloor+1$, we obtain
\begin{align*}
	\|\sup_{t\in \mathcal{D}_{C,\infty}} \mathcal{F}^{-1} (	\mathfrak{b}_{t,n_{0}} \widehat{f}\,)\|_{L_{2}(\mathcal{{N}})}
	\lesssim&\sum_{q\geq n_{0}} D_{1}^dD_{2}^d\,q^{-{d}/{2}+1}\|f\|_{L_{2}(\mathcal{{N}})}\\
	=& \sum_{q\geq n_{0}} q^{-{2d}/{5}+1}\bigg(\frac{(D_{1}D_{2})^{10}}{q}\bigg)^{{d}/{10}}\|f\|_{L_{2}(\mathcal{{N}})}\\
	 \leq&\sum_{q\geq n_{0}} q^{-{2d}/{5}+1}\bigg(\frac{(D_{1}D_{2})^{10}}{\lfloor (D_{1}D_{1})^{10}\rfloor+1}\bigg)^{{d}/{10}}\|f\|_{L_{2}(\mathcal{{N}})} 
	\lesssim \|f\|_{L_{2}(\mathcal{{N}})},
\end{align*}
as long as $d\geq 16$. Therefore, it  reduces to  showing $(\ref{1204.1})$ and $(\ref{1204.4})$, respectively.

	\vskip 0.2cm

	\textbf{Proof of  (\ref{1204.1})}: Let 
\begin{align*}
	m_{t}(\xi)=\Theta_{(q)}(\xi)\widehat{\mu}(t\xi),
\end{align*} 
and  consider  the corresponding multiplier $(m_{t})_{per}^{q}(\xi)$ defined in (\ref{250603.2}). Evidently, the operator  $U^{q}_{t}$  coincides with the multiplier operator $(T_{t})^{q}_{dis}$   given in (\ref{250516.1}).  Therefore, it suffices to show 
 \begin{align}\label{250520.1}
 		\|\sup_{t\in \mathcal{D}_{C,\infty}} (T_{	t})^{q}_{dis}f\|_{L_{2}(\mathcal{{N}})}\leq D_{1}^d\,\|f\|_{L_{2}(\mathcal{{N}})}.
 \end{align}
  Notice that $\mbox{supp}\,m_{t}\subset q^{-1}Q$ and the convolution kernel corresponding to multiplier $m_{t}$ is a real-valued function, since	 
 \begin{align*} 
 \widehat{\mu}(\xi)=\int_{\mathbb{S}}e^{-2\pi\mathrm{i}\langle x,\xi\rangle }d\mu(x)=\int_{\mathbb{S}}\prod_{j=1}^{d}\cos(2\pi \mathrm{i}x_{j}\xi_{j})d\mu(x),
 \end{align*}
  and  $\Theta$ are radial functions. On the other hand, Lemma \ref{1209.2} implies that 
  \begin{align*}
	\|\sup_{t\in \mathcal{D}_{C,\infty}} &T_{t}f\|_{L_{2}( L_{\infty}(\mathbb{R}^d)\overline{\otimes}\mathcal{M})}=\|\sup_{t\in \mathcal{D}_{C,\infty}}  \mathcal{F}^{-1}(\widehat{\mu}(t\cdot)\Theta_{(q)}\widehat{f})\|_{L_{2}( L_{\infty}(\mathbb{R}^d)\overline{\otimes}\mathcal{M})}\nonumber\\
	\lesssim&\|\Theta_{(q)}\widehat{f}\,\|_{L_{2}( L_{\infty}(\mathbb{R}^d)\overline{\otimes}\mathcal{M})}
	\leq\|\Theta\|_{ L_{\infty}(\mathbb{R}^d)}\|f\|_{L_{2}( L_{\infty}(\mathbb{R}^d)\overline{\otimes}\mathcal{M})}\leq\|f\|_{L_{2}( L_{\infty}(\mathbb{R}^d)\overline{\otimes}\mathcal{M})}.
\end{align*}  
 Therefore, applying Proposition \ref{1206.1} to our setting, (\ref{250520.1})  is proved  with $D_{1}=3$.


\vspace{0.1cm}

	\textbf{Proof of  (\ref{1204.4})}:
 We invoke (\ref{250411.1})   with  
  $m(\xi)= \sum_{x\in\mathbb{Z}^d}G({p}/{q};x)\Phi_{(q)}(\xi-{x}/{q})$.   Notice that $\mbox{supp}\, \Phi_{(q)} \subset q^{-1}Q$. It was established in    \cite[Page 24]{mirek2023dimension} that $\sum_{n\in\mathbb{Z}^d}|\widehat{\Phi_{(q)}}(n)|\leq A^d$, where $A$ is a universal constant depending only on $\Phi$.  On the other hand,   $G(p/q;x)$ is  $q\mathbb{Z}^d$ periodic and  satisfies $\sup_{x\in\mathbb{Z}^d}|G(p/q;x)|\leq (2/q)^{d/2}$ by (\ref{1205.10}).   Therefore, the application of (\ref{250411.1})  yields (\ref{1204.4}) with $D_{2}=\sqrt{2}A$.
\end{proof}
Now, we have all ingredients to prove Theorem  \ref{0115.1}.

\noindent\textbf{Proof of Theorem \ref{0115.1}}: Let $c_{3}>0$ be a sufficiently large universal constant to ensure that the conclusions of Proposition \ref{1202.1}, Lemma \ref{0109.6} and Lemma \ref{1206.7} are satisfied. Choose $n_{0}\in\mathbb{N}$
   large enough to satisfy the conclusion of Lemma \ref{1206.7}.  Then, by applying Proposition \ref{1202.1} together with Lemma \ref{0109.6}, Lemma \ref{1206.7}, Proposition \ref{8.10.1}, and Plancherel’s theorem, we obtain
\begin{align*}
	\|\sup_{t\in  \mathcal{D}_{c_{3},\infty}} \mathcal{A}_{t}^{d}&f\|_{L_{2}(\mathcal{N})}\\ 
	\leq&\bigg\|\sup_{t\in  \mathcal{D}_{c_{3},\infty}} \mathcal{F}^{-1}\bigg(\sum_{ {p}/{q}\in H_{N},\atop q<n_{0}}\mathfrak{a}_{t, {p}/{q}}\widehat{f}\,\bigg)\bigg\|_{L_{2}(\mathcal{N})}+\|\sup_{t\in \mathcal{D}_{c_{3},\infty}} \mathcal{F}^{-1}(\mathfrak{b}_{t, n_{0}}\widehat{f}\,)\|_{L_{2}(\mathcal{N})}\\
	&+\|\sup_{t\in\mathcal{D}_{c_{3},\infty}} \mathcal{F}^{-1}(E_{t,n_{0}}\widehat{f}\,)\|_{L_{2}(\mathcal{N})}\\
	\lesssim &\bigg(1+ \sum_{ t\in \mathbb{D} \atop t>c_{3}d^{3/2}} C^d \frac{d^{{3d}/{4}}}{t^{ {d}/{2}-2}}\bigg)\|f\|_{L_{2}(\mathcal{N})}\\
= &\bigg(1+ \sum_{ t\in \mathbb{D} \atop t>c_{3}d^{3/2}} \frac{d^9C^{12}}{t^4}\bigg( \frac{d^{3}C^4}{t^2}\bigg)^{ {d}/{4}-3}\bigg)\|f\|_{L_{2}(\mathcal{N})}.
\end{align*}
Since $d\geq 16$, taking in
a constant $c_{3}>0$ such that $c_{3}^2>2C^4$ we obtain
\begin{align*}
	\|\sup_{t\in  \mathcal{D}_{c_{3},\infty}} \mathcal{A}_{t}^{d}f\|_{L_{2}(\mathcal{N})}\lesssim  \bigg(1+\sum_{ t\in \mathbb{D} \atop t>c_{3}d^{3/2}}t^{-4}\bigg )\|f\|_{L_{2}(\mathcal{N})}\lesssim\|f\|_{L_{2}(\mathcal{N})},
\end{align*}
which completes the proof.
$ \hfill\square $

\vspace{0.5cm}

\noindent\textbf{\small Acknowledgements} {\small This work  is supported by the National Natural Science Foundation of China (No. 12371138)}.
  
\vspace{0.2cm}
 
\noindent\textbf{\small Data availability} Data sharing is not applicable to this article, as no datasets were generated or analyzed during the current study.

\end{document}